\documentclass[a4paper,twoside,10pt]{article}

\usepackage{amsmath, mathrsfs,amssymb}
\usepackage[dvipsnames]{xcolor}
\numberwithin{equation}{section}

\usepackage{mathtools}

\usepackage{scalerel}
\usepackage[usestackEOL]{stackengine}
\def\dashint{\,\ThisStyle{\ensurestackMath{%
            \stackinset{c}{.2\LMpt}{c}{.5\LMpt}{\SavedStyle-}{\SavedStyle\phantom{\int}}}%
        \setbox0=\hbox{$\SavedStyle\int\,$}\kern-\wd0}\int}

\usepackage{amsthm}
\usepackage{aliascnt}

\newtheorem{theorem}{Theorem}[section]

\newaliascnt{lemma}{theorem}
\newtheorem{lemma}[lemma]{Lemma}
\aliascntresetthe{lemma}

\newaliascnt{proposition}{theorem}
\newtheorem{proposition}[proposition]{Proposition}
\aliascntresetthe{proposition}

\newaliascnt{corollary}{theorem}
\newtheorem{corollary}[corollary]{Corollary}
\aliascntresetthe{corollary}

\theoremstyle{definition}
\newaliascnt{definition}{theorem}

\aliascntresetthe{definition}

\theoremstyle{remark}
\newaliascnt{remark}{theorem}
\newtheorem{remark}[remark]{Remark}
\aliascntresetthe{remark}


\usepackage{dsfont}
\newcommand{\N}{\mathds N}
\newcommand{\R}{\mathds R}
\newcommand{\weak}{\rightharpoonup}
\newcommand{\eps}{\varepsilon}

\usepackage[a4paper,centering,margin=2cm,bindingoffset=0cm,marginpar=0cm]{geometry} 

\usepackage{fancyhdr}
\fancyhf{}

\fancyhead[CO]{\footnotesize\sc Prescribed Curvature Flow on Closed Surfaces with Negative Euler Characteristic}
\fancyhead[CE]{\footnotesize\sc Franziska Borer, Peter Elbau, Tobias Weth}
\fancyhead[LE,RO]{\footnotesize\thepage}
\pagestyle{fancy}

\usepackage{titlesec}
\titleformat{\section}{\filcenter\sc\large}{\thesection.\;}{0em}{}
\titleformat{\subsection}[runin]{\bf}{\thesubsection.\;}{0em}{}[.]




\usepackage[backend=biber,maxnames=10,firstinits=true]{biblatex}
\addbibresource{articles.bib}
\DeclareFieldFormat[article,inbook,incollection,inproceedings,patent,thesis,unpublished]{title}{{\em#1}}
\DeclareFieldFormat[article,inbook,incollection,inproceedings,patent,thesis,unpublished]{journaltitle}{#1}
\DeclareFieldFormat[incollection,inproceedings]{booktitle}{#1}
\AtEveryBibitem{%
  \clearname{translator}%
  \clearfield{pagetotal}%
  \clearlist{language}%
  \clearfield{month}
}
\renewbibmacro{in:}{}
\DeclareFieldFormat{eprint:euclid}{Project euclid: \href{http://projecteuclid.org/euclid.\thefield{eprint}}{\thefield{eprint}}}

\setcounter{biburlnumpenalty}{1}

\usepackage[pdftex,colorlinks=true,linkcolor=blue,citecolor=red,bookmarks=true,bookmarksnumbered=true]{hyperref}
\hypersetup{%
	pdfauthor={Franziska Borer, Peter, Elbau, Tobias Weth}, 
	pdfsubject={Prescribed Curvature Flow on Closed Surfaces with Negative Euler Characteristic}, 
	pdftitle={Prescribed Curvature Flow on Surfaces with Negative Euler Characteristic}, 
	pdfkeywords={Prescribed Curvature Flow, global solution, convergence, evolution equation}}

\newcommand{\e}{\mathrm{e}}

\DeclareMathOperator{\vol}{vol}


\title{A Variant Prescribed Curvature Flow on Closed Surfaces with Negative Euler Characteristic}
\author{Franziska Borer\thanks{Technical University of Berlin, Institute of Mathematics, Stra{\ss}e des 17. Juni 136, D-10623 Berlin, Germany\newline email: \href{mailto:borer@tu-berlin.de}{borer@tu-berlin.de}}
\and Peter Elbau\thanks{University of Vienna, Faculty of Mathematics, Oskar-Morgenstern-Platz 1, A-1090 Vienna, Austria\newline email: \href{mailto:peter.elbau@univie.ac.at}{peter.elbau@univie.ac.at}}
\and Tobias Weth\thanks{Goethe University Frankfurt, Institute of Mathematics, Robert-Mayer-Stra{\ss}e 10, D-60629 Frankfurt, Germany\newline email: \href{mailto:weth@math.uni-frankfurt.de}{weth@math.uni-frankfurt.de}}}
\date{}

\begin{document}
\maketitle

\begin{abstract}
  On a closed Riemannian surface $(M,\bar g)$ with negative Euler characteristic, we study the problem of finding conformal metrics  with prescribed volume $A>0$ and the property that their Gauss curvatures $f_\lambda= f + \lambda$ are given as the sum of a prescribed function $f \in C^\infty(M)$ and an additive constant $\lambda$. Our main tool in this study is a new variant of the prescribed Gauss curvature flow, for which we establish local well-posedness and global compactness results. In contrast to previous work, our approach does not require any sign conditions on $f$. Moreover, we exhibit conditions under which the function $f_\lambda$ is sign changing and the standard prescribed Gauss curvature flow is not applicable. 
\end{abstract}

\section*{Acknowledgment}
This work was funded by the Deutsche Forschungsgemeinschaft (DFG, German Research
Foundation), project 408275461 (Smoothing and Non-Smoothing via Ricci Flow).\\
We would like to thank Esther Cabezas--Rivas for helpful discussions.


\section{Introduction}

Let $(M,\bar g)$ be a two-dimensional, smooth, closed, connected, oriented 
Riemann manifold endowed with a smooth background metric $\bar g$. A classical problem raised by Kazdan and Warner in \cite{KazWar74_1} and \cite{KazWar74_2} is the question 
which smooth functions $f \colon M \to \R$ arise as the Gauss 
curvature $K_g$ of a conformal metric $g(x)=\e^{2u(x)}\bar g(x)$ on $M$ and to characterise the set of all
such metrics.

For a constant function $f$, this prescribed Gauss curvature problem is exactly the statement of the {\em Uniformisation Theorem (see e.g.~\cite{Poi08}, \cite{Koe08}):}\\
{\em There exists a metric $g$ which is pointwise conformal to $\bar g$ and
has constant Gauss curvature $K_{g}\equiv\bar K\in\R$.}\\
We now use this statement to
assume in the following without loss of generality
that the background metric $\bar g$ itself has constant Gauss curvature $K_{\bar g}\equiv \bar K\in\R$. Furthermore we can normalise the volume of $(M,\bar g)$ to one. We recall that the Gauss curvature of a conformal 
metric $g(x)=\e^{2u(x)}\bar g(x)$ on $M$ is given by the Gauss equation
\begin{equation}\label{GaussEquation}
K_{g}(x) = \e^{-2u(x)}(-\Delta_{\bar g}u(x) + \bar K).
\end{equation}
Therefore the problem reduces to the question for which functions $f$ there exists a conformal factor $u$ solving the equation 
\begin{equation}\label{PCP}
-\Delta_{\bar g}u(x) + \bar K = f(x) \e^{2u(x)} \qquad \text{in $M$.}
\end{equation}
Given a solution $u$, we may integrate \eqref{PCP} with respect to the measure $\mu_{\bar g}$ on $M$ induced by the Riemannian volume form. Using the 
Gauss--Bonnet Theorem, we then obtain the identity 
\begin{equation}\label{Condition1}
\int_M f(x) d\mu_g(x) = \int_M \bar K d\mu_{\bar g}(x) = \bar K \vol_{\bar g}=\bar K= 2\pi \chi(M),
\end{equation}
where $d\mu_g(x) = \e^{2u(x)}d\mu_{\bar g}(x)$ is the element of area in the metric $g(x)=\e^{2u(x)}\bar g(x)$.
We note that \eqref{Condition1} immediately yields necessary conditions on $f$ for the solvability of the prescribed Gauss curvature problem. In particular, if $\pm \chi(M)>0$, then $\pm f$ must be positive somewhere. Moreover, if $\chi(M)=0$, then $f$ must change sign or must be identically zero.

In the present paper we focus on the case $\chi(M)<0$, so $M$ is a surface of genus greater than one and $\bar K < 0$. The complementary cases $\chi(M) \ge 0$---i.e., the cases where $M = S^2$ or $M=T$, the $2$-torus---will be discussed briefly at the end of this introduction, and we also refer the reader to \cite{Str05,Str20,BuzSchStr16,Gal15} and the references therein. Multiplying equation \eqref{PCP} with the factor $\e^{-2u}$ and integrating over $M$ with respect to the measure $\mu_{\bar g}$,
we get the following necessary condition---already mentioned by Kazdan and Warner in \cite{KazWar74_1}---for the average $\bar f:=\frac{1}{\vol_{\bar g}}\int_M f(x)d\mu_{\bar g}(x)$, with $\vol_{\bar g}:=\int_Md\mu_{\bar g}(x)$:
\begin{equation}\label{Condition2}
\begin{split}
\bar f&=\frac{1}{\vol_{\bar g}}\int_M f(x)d\mu_{\bar g}(x)=\int_M(-\Delta_{\bar g}u(x)+\bar K)\e^{-2u(x)}d\mu_{\bar g}(x)\\
&=\int_M(-2|\nabla_{\bar g}u(x)|^2_{\bar g}+\bar K)\e^{-2u(x)}d\mu_{\bar g}(x)<0.
\end{split}
\end{equation}
This condition is not sufficient. Indeed, it has already been pointed out in \cite[Theorem 10.5]{KazWar74_1} that in the case $\chi(M)<0$ there always exist functions $f \in C^\infty(M)$ with $\bar f< 0$ and the property that \eqref{PCP} has no solution.

We recall that solutions of \eqref{PCP} can be 
characterised as critical points of the functional 
\begin{equation}\label{Ef}
E_f:H^1(M,\bar g)\to \R;\quad E_f(u) := \frac12\int_M \left(|\nabla_{\bar g} u(x)|^2_{\bar g} + 2\bar K u(x) - f(x)\e^{2u(x)}\right)
 d\mu_{\bar g}(x).   
\end{equation}
Under the assumption $\chi(M)<0$, i.e., $\bar K < 0$, the functional $E_f$ is strictly convex and coercive on $H^1(M,\bar g)$ if $f\le 0$ and $f$ 
does not vanish identically. Hence, as noted in \cite{DinLiu95}, the functional $E_f$ admits a unique critical
point $u_f \in H^1(M,\bar g)$ in this case, which is a strict absolute minimiser of $E_f$ and a (weak) solution of \eqref{PCP}. The situation is more delicate in the case where $f_\lambda=f_0+\lambda$, where $f_0\le0$ is a smooth, nonconstant function on $M$ with $\max_{x\in M}f_0(x)=0$, and $\lambda >0$. In the case where $\lambda>0$ sufficiently small (depending on $f_0$), it was shown in \cite{DinLiu95} and \cite{BorGalStr15} that the corresponding functional $E_{f_\lambda}$ admits a local minimiser $u_\lambda$ and a further critical point $u^\lambda\neq u_\lambda$ of mountain pass type.

These results motivate our present work, where we suggest a new flow approach to the prescribed Gausss curvature problem in the case $\chi(M)<0$. It is important to note here that there is an intrinsic motivation to formulate the static problem in a flow context. 
Typically, elliptic theories are regarded as the static case of the corresponding parabolic problem; in that sense, many times the better-understood elliptic theory
has been a source of intuition to generalise the corresponding results in the
parabolic case. Examples of this feedback are minimal surfaces/mean curvature
flow, harmonic maps/solutions of the heat equation, and the Uniformisation Theorem/the two-dimensional normalised Ricci flow.

In this spirit, a flow approach to \eqref{PCP}, the so-called prescribed Gauss curvature flow, was first introduced by Struwe in \cite{Str05} (and \cite{BuzSchStr16}) for the case $M=S^2$ with the standard background metric and a positive function $f \in C^2(M)$. More precisely, he considers a family of metrics $(g(t,\cdot))_{t\ge0}$ which fulfils the initial value problem
\begin{align}
\partial_tg(t,x)&=2(\alpha(t)f(x)-K_{g(t,\cdot)}(x))g(t,x)\quad\text{in }(0,T)\times M;\\
g(0,x)&= g_0(x)\quad\text{on }\{0\}\times M,
\end{align}
with 
\begin{equation}
  \label{eq:def-alpha-t}
\alpha(t)=\frac{\int_MK_{g(t,\cdot)}(x)d\mu_{g(t,\cdot)}(x)}{\int_M f(x)d\mu_{g(t,\cdot)}(x)}=\frac{2\pi\chi(M)}{\int_Mf(x)d\mu_{g(t,\cdot)}(x)}.
\end{equation}
This choice of $\alpha(t)$ ensures that the volume of $(M,g(t,\cdot))$ remains constant throughout the deformation, i.e.,
\[\int_M d\mu_{g(t,\cdot)}(x)=\int_M\e^{2u(t,x)}d\mu_{\bar g}(x)\equiv\vol_{g_0}\quad\text{for all }t\ge0,\]
where $g_0$ denotes the initial metric on $M$. Equivalently one may consider the evolution equation for the associated conformal factor $u$ given by $g(t,x)=\e^{2u(t,x)}\bar g(x)$: 
\begin{align}
\partial_tu(t,x)&=\alpha(t) f(x)-K_{g(t,\cdot)}(x)\quad\text{in }(0,T)\times M;\label{PCFnegative}\\
u(0,x)&=u_0(x)\quad\text{on }\{0\}\times M.
\end{align}
Here the initial value $u_0$ is given by $g_0(x)=\e^{2u_0(x)}\bar g(x)$. The flow associated to this parabolic equation is usually called the prescribed Gauss curvature flow. 
 With the help of this flow, Struwe \cite{Str05} provided a new proof of a result by Chang and Yang \cite{ChaYan87} on sufficient criteria for a function $f$ to be the Gauss curvature of a metric $g(x)=\e^{2u(x)}g_{S^2}(x)$ on $S^2$. He also proved the sharpness of these criteria.
 
In the case of surfaces with genus greater than one, i.e., with negative Euler characteristic, the prescribed Gauss curvature flow was used by Ho in \cite{Ho11} to prove that any smooth, strictly negative function on a surface with negative Euler characteristic can be realised as the Gaussian curvature of some metric.
More precisely, assuming that $\chi(M)< 0$ and that $f \in C^\infty(M)$ is a strictly negative function, he proves that 
equation \eqref{PCFnegative} has a solution which is defined for all times and converges to a metric $g_\infty$ with Gaussian curvature $K_{g_\infty}$ satisfying
\[K_{g_\infty}(x)=\alpha_\infty f(x)\]
for some constant $\alpha_\infty$.

While the prescribed Gauss curvature flow is a higly useful tool in the cases where $f$ is of fixed sign,   
it cannot be used in the case where $f$ is sign-changing. Indeed, in this case we may have $\int_Mf(x)d\mu_{g(t,\cdot)}(x)=0$ along the flow and then the normalising factor $\alpha(t)$ is not well-defined by \eqref{eq:def-alpha-t}.
As a consequence, a long-time solution of \eqref{PCFnegative} might not exist. In particular, the static existence results of \cite{DinLiu95} and \cite{BorGalStr15} can not be recovered and reinterpreted with the standard prescribed Gauss curvature flow.

In this paper we develop a new flow approach to \eqref{PCP} in the case $\chi(M)<0$ for general $f \in C^\infty(M)$, which sheds new light on the results in \cite{DinLiu95}, \cite{BorGalStr15} and \cite{Ho11}.
The main idea is to replace the multiplicative normalisation in \eqref{PCFnegative} by an additive normalisation, as will be described in details in the next chapter.

At this point, it should be noted that the normalisation factor $\alpha(t)$ in the prescribed Gauss curvature flow given by \eqref{eq:def-alpha-t} is also not the appropriate choice in the case of the torus, where, as noted before, $f$ has to change sign or be identically zero in order to arise as the Gauss curvature of a conformal metric. The case of the torus was considered by  Struwe in \cite{Str20}, where, in particular, he used to a flow approach to reprove and partially improve a result by Galimberti \cite{Gal15} on the static problem. In this approach, the normalisation in \eqref{eq:def-alpha-t} is replaced by 
\begin{equation}
  \label{eq:def-alpha-t-torus}
\alpha(t)=\frac{\int_M f(x)K_{g(t,\cdot)}(x)d\mu_{g(t,\cdot)}(x)}{\int_Mf^2(x)d\mu_{g(t,\cdot)}(x)}.
\end{equation}
With this choice, Struwe shows that  for any smooth
$$
u_0\in C^*:=\left\{u\in H^1(M,\bar g)\mid\int_Mf(x)\e^{2u(x)}d\mu_{\bar g}(x)=0,\:\int_M\e^{2u(x)}d\mu_{\bar g}(x)=1\right\}
$$
there exists a unique, global smooth solution $u$ of \eqref{PCFnegative} satisfying $u(t,\cdot)\in C^*$ for all $t>0$. Moreover, $u(t,\cdot)\to u_\infty(\cdot)$ in $H^2(M,\bar g)$ (and
smoothly) as $t\to\infty$ suitably, where $u_\infty+c_\infty$ is a smooth solution of \eqref{PCP} for some $c_\infty\in\R$.

In principle, the normalisation \eqref{eq:def-alpha-t-torus} could also be considered in the case $\chi(M)<0$, but then the flow is not volume-preserving anymore, which results in a failure of uniform estimates for solutions of \eqref{PCFnegative}. Consequently, we were not able to make use of the associated flow in this case. 

The paper is organised as follows. In \autoref{SectionProperties} we set up the framework for the new variant of the prescribed Gauss curvature flow with additive normalisation, and we collect basic properties of it. In \autoref{SectionMainResults}, we then present our main result on the long-time existence and convergence of the flow (for suitable times $t_k \to \infty$) to solutions of the corresponding static problem. In particular, our results show how sign changing functions of the form $f_\lambda = f_0 + \lambda$ arise depending on various assumptions on the shape of $f_0$ and on the fixed volume $A$ of $M$ with respect to the metric $g(t)$. Before proving our results on the time-dependent problem, we first derive, in \autoref{sec:stat-minim-probl}, some results on the static problem with volume constraint. Most of these results will then be used in \autoref{sec:proof-main-results}, where the parabolic problem is studied in detail and the main results of the paper are proved. In the appendix, we provide some regularity estimates and a variant of a maximum principle for a class of linear evolution problems with H\"older continuous coefficients.   

In the remainder of the paper, we will use the short form $f$, $g(t)$, $u(t)$, $K_{g(t)}$, $\vol_{g(t)}:=\int_M d\mu_{g(t)}=\int_M\e^{2u(t)}d\mu_{\bar g}$, and so on instead of $f(x)$, $g(t,x)$, $u(t,x)$, $K_{g(t,\cdot)}(x)$, $\int_M d\mu_{g(t,\cdot)}(x)=\int_M\e^{2u(t,x)}d\mu_{\bar g}(x)$, et cetera.

\section{A New Flow Approach and Some of its Properties}\label{SectionProperties}
Before introducing the additively rescaled prescribed Gauss curvature flow, we recall an important and highly useful estimate. The following lemma (see e.g.~\cite[Corollary 1.7]{Cha04}) is a consequence of the Trudinger's inequality \cite{Tru67} which was improved by Moser in \cite{Mos71} (for more details see e.g.~\cite[Theorem 2.1 and Theorem 2.2]{Str20}):

\begin{lemma}\label{Onofri-ungleichung}
For a two-dimensional, closed Riemannian  manifold $(M,\bar g)$ there are constants $\eta>0$ and $C_{\text{MT}}>0$ such that
\begin{equation}\label{OU}
\int_M\e^{(u-\bar u)}d\mu_{\bar g}\le C_{\text{MT}}\exp\left(\eta\|\nabla_{\bar g}u\|^2_{L^2(M,\bar g)}\right)
\end{equation}
for all $u\in H^1(M,\bar g)$ where
\[\bar u:=\frac{1}{\vol_{\bar g}}\int_Mu\:d\mu_{\bar g}=\int_Mu\:d\mu_{\bar g},\]
in view of our assumption that $\vol_{\bar g}=1$.
\end{lemma}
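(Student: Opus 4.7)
The plan is to deduce \eqref{OU} from the classical Moser--Trudinger inequality on closed Riemannian surfaces combined with an elementary weighted Young inequality. The Moser--Trudinger inequality (due to Moser on the sphere and extended to general closed surfaces; see e.g.~\cite{Cha04}) asserts that there exist constants $\alpha_0>0$ and $C_0>0$, depending only on $(M,\bar g)$, such that
\[
\int_M \exp\!\left(\alpha_0\,\frac{v^2}{\|\nabla_{\bar g} v\|_{L^2(M,\bar g)}^2}\right) d\mu_{\bar g} \le C_0
\]
for every $v\in H^1(M,\bar g)$ with $\bar v = 0$ and $\|\nabla_{\bar g} v\|_{L^2}>0$. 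This is the only non-elementary ingredient, and in that sense is the main obstacle; I would simply cite the known version on closed surfaces rather than reprove it.

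Given this, first I would reduce to the zero-mean case by setting $v := u - \bar u$, noting that $\|\nabla_{\bar g} v\|_{L^2} = \|\nabla_{\bar g} u\|_{L^2}$. The degenerate case $\|\nabla_{\bar g} u\|_{L^2}=0$ is handled separately: $u$ is constant, so $v\equiv 0$ and both sides of \eqref{OU} equal $\vol_{\bar g}=1$. Assuming from now on that $\|\nabla_{\bar g} u\|_{L^2}>0$, I apply the elementary inequality $ab\le \varepsilon a^2 + \frac{b^2}{4\varepsilon}$ (valid for all real $a,b$ and any $\varepsilon>0$) with
\[
a := \frac{v(x)}{\|\nabla_{\bar g} v\|_{L^2}}, \qquad b:= \|\nabla_{\bar g} v\|_{L^2}, \qquad \varepsilon := \alpha_0,
\]
to obtain the pointwise estimate
\[
v(x) \le \alpha_0\,\frac{v(x)^2}{\|\nabla_{\bar g} v\|_{L^2}^2} + \frac{1}{4\alpha_0}\,\|\nabla_{\bar g} v\|_{L^2}^2.
\]

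Exponentiating and integrating over $M$, the second summand is independent of $x$ and factors out of the integral, while the first is controlled by the Moser--Trudinger estimate above. This yields
\[
\int_M \e^{u - \bar u}\,d\mu_{\bar g} \le C_0\,\exp\!\left(\frac{1}{4\alpha_0}\,\|\nabla_{\bar g} u\|_{L^2(M,\bar g)}^2\right),
\]
so \eqref{OU} holds with $C_{\text{MT}} := C_0$ and $\eta := \frac{1}{4\alpha_0}$. Apart from locating a suitable statement of the Moser--Trudinger inequality on $(M,\bar g)$, no further difficulties arise; the manipulation with the Young inequality is the only explicit computation needed.
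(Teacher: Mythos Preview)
Your argument is correct and is the standard way to derive \eqref{OU} from the sharp Moser--Trudinger inequality. The paper does not actually prove this lemma: it merely cites \cite[Corollary~1.7]{Cha04} (together with \cite{Tru67}, \cite{Mos71}, and \cite{Str20}) and states the result without further argument. So you have supplied more detail than the paper does, and in exactly the spirit the authors indicate.

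One very minor imprecision: in the degenerate case $\|\nabla_{\bar g}u\|_{L^2}=0$ you write that ``both sides of \eqref{OU} equal $\vol_{\bar g}=1$''. The left-hand side is indeed $1$, but the right-hand side is $C_{\text{MT}}\cdot\e^{0}=C_{\text{MT}}$. This causes no trouble, since the Moser--Trudinger constant $C_0$ automatically satisfies $C_0\ge\vol_{\bar g}=1$ (the integrand $\exp(\alpha_0 v^2/\|\nabla v\|^2)$ is pointwise $\ge 1$), so the inequality holds in this case as well.
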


As a consequence of \autoref{Onofri-ungleichung}, we have
\begin{equation}
  \label{eq:onofri-consequence}
\int_M \e^{p u}d\mu_{\bar g} = \e^{p \bar u} \int_M\e^{(pu-\bar{pu})}d\mu_{\bar g} \le \e^{p \bar u}
C_{\text{MT}}\exp\left(\eta\|\nabla_{\bar g}(pu)\|^2_{L^2(M,\bar g)}\right)< \infty 
\end{equation}
for every $u \in H^1(M,\bar g)$ and $p>0$. Therefore,  for a given $A>0$, the set 
\begin{equation}\label{NB}
\mathcal{C}_{A}:= \left\{u\in H^1(M,\bar g)\mid \int_M \e^{2u}d\mu_{\bar g}=A\right\}
\end{equation}
is well defined. We also note that  
\begin{equation}
\label{Jensen-consequence}
\bar u\le\frac12\log(A) \qquad \text{for $u\in \mathcal{C}_{A}$,}
\end{equation}
since by Jensen's inequality and our assumption that $\vol_{\bar g}=1$ we have 
\begin{equation}
\label{jensen-consequence}  
2\bar u=\dashint_M 2 ud\mu_{\bar g}=\int_M 2ud\mu_{\bar g}\le \log\left(\dashint\e^{2u}d\mu_{\bar g}\right)=\log(A)\qquad \text{for $u\in \mathcal{C}_{A}$.}
\end{equation}
Next, we let $f\in C^\infty(M)$ be a fixed smooth function. As a consequence of \eqref{eq:onofri-consequence}, the energy functional $E_f$ given in \eqref{Ef} is then well defined and of class $C^1$ on $H^1(M,\bar g)$. Moreover, we have
\begin{equation}
  \label{eq:C-A-energy-ineq}
  E_f(u) \le \frac12 \|\nabla u\|_{L^2(M,\bar g)}^2 + |\bar K| \|u\|_{L^1(M,\bar g)}  + \frac{A}{2} \|f\|_{L^\infty(M,\bar g)} \quad \text{for $u
    \in \mathcal{C}_{A}$}
\end{equation}

We now consider the additively rescaled prescribed Gauss curvature flow given by the evolution equation
\begin{equation}\label{PCFVol1}
  \partial_tu(t)=f-K_{g(t)}-\alpha(t) = f+\e^{-2u(t)}( \Delta_{\bar g}u(t)-\bar K)-\alpha(t) \quad\text{in }(0,T)\times M,
\end{equation}
where $\alpha(t)$ is chosen such that the volume $\vol_{g(t)}$ of $M$ with respect to the metric $g(t)=\e^{2u(t)}\bar g$ remains constant along the flow. The latter condition requires that  
\begin{equation}\label{VolPres}
\frac12\frac{d}{dt}\vol_{g(t)}= \int_M\partial_t u(t) d\mu_{g(t)}=\int_M(f-K_{g(t)}-\alpha(t))d\mu_{g(t)}=\int_Mf d\mu_{g(t)} -\alpha(t)\vol_{g(t)} - \bar K
\end{equation}
vanishes for $t >0$ and therefore suggest the definition of $\alpha(t)$ given in \eqref{eq:definition-alpha} below. We first note the following observations.
\begin{proposition}\label{Properties1}
Let $T>0$, $f \in C^\infty(M)$, $A>0$, let $u_0 \in \mathcal{C}_{A}$, and let $u \in C([0,T), H^1(M,\bar g))\cap C^1((0,T),H^2(M,\bar g))$ be a solution of the initial value problem
\begin{align}
\partial_tu(t)&=f-K_{g(t)}-\alpha(t)\quad\text{in }(0,T)\times M; \label{PCFVolNew1}\\
u(0)&=u_0 \quad\text{on }\{0\}\times M,\label{PCFVolNew2}
\end{align} 
where
\begin{equation}
  \label{eq:definition-alpha}
\alpha(t)=\frac{1}{A}\left(\int_Mf d \mu_{g(t)}-\bar K\right)=\frac{1}{A}\left(\int_Mf \e^{2u(t)}d \mu_{\bar g}-\bar K\right)
\end{equation}
Then  
\begin{enumerate}
\item the volume $\vol_{g(t)}$ of $(M,g(t))$ is preserved along the flow, i.e., $\vol_{g(t)}\equiv \vol_{g_0}=A$ and therefore $u(t) \in \mathcal{C}_{A}$ for $t \in [0,T)$;
\item along this trajectory, we have a uniform bound for $\alpha$ given by
\begin{equation}\label{Lower-Upper-Boundtildealpha}
|\alpha(t)| \le \alpha_0\quad \text{for $t \in [0,T)\;$ with}\quad  \alpha_0:=\|f\|_{L^\infty(M,\bar g)} +\frac{|\bar K|}{A};
\end{equation}
\item the equation \eqref{PCFVolNew1} remains invariant under adding a constant $c \in \R$ to the function $f$;
\item the function $t \mapsto E_f(u(t))$ is decreasing on $[0,T)$, so in particular $E_f(u(t))\le E_f(u_0)$ for $t \in [0,T)$;
\item there exist constants $c_0=c_0(u_0)>0$, $c_1=c_1(u_0)>0$ depending only on $u_0$ with the property that 
\begin{equation}\label{Unigradbound}
\|\nabla_{\bar g}u(t)\|^2_{L^2(M,\bar g)}\le c_0 + c_1 \|f\|_{L^\infty(M,\bar g)} \qquad \text{for $t \in [0,T)$;}
\end{equation}
\item there exist constants $m_0=m_0(u_0) \in \R$, $m_1=m_1(u_0)>0$ depending only on $u_0$ with the property that 
\begin{equation}\label{UniBarU}
m_0- m_1 \|f\|_{L^\infty(M,\bar g)} \le \bar u(t)\le\frac12\log(A) \qquad \text{for $t \in [0,T)$;}
\end{equation}
\item for every $p \in \R$ there exist constants $\nu_0= \nu_0(u_0,p),\,\nu_1 = \nu_1(u_0,p)>0$ with 
\begin{equation}\label{Uniformexp}
\int_M \e^{2pu(t)}d\mu_{\bar g}\le \nu_0 \e^{\nu_1 \|f\|_{L^\infty(M,\bar g)}} \qquad \text{for $t \in [0,T)$.}
\end{equation}
\end{enumerate}
\end{proposition}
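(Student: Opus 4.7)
The plan is to prove the seven items essentially in the order listed, with each property feeding directly into the next. Items (i)--(iv) will come from direct computations exploiting the specific form of the flow and of $\alpha(t)$, while (v)--(vii) will combine the resulting energy bound with the Moser--Trudinger inequality from \autoref{Onofri-ungleichung}.

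For (i), I would set $V(t) := \vol_{g(t)}$ and differentiate directly:
\[
\frac{d}{dt}V(t) = 2\int_M \partial_t u(t)\,d\mu_{g(t)} = 2\int_M f\,d\mu_{g(t)} - 2\bar K - 2\alpha(t) V(t),
\]
using the Gauss--Bonnet identity $\int_M K_{g(t)}\,d\mu_{g(t)} = \bar K$. Substituting the definition \eqref{eq:definition-alpha} of $\alpha(t)$ turns this into $\frac{d}{dt}V(t) = \frac{2}{A}\big(\int_M f\,d\mu_{g(t)}-\bar K\big)(A-V(t))$, a linear ODE in $V$ with $V(0)=A$, forcing $V\equiv A$. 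Then (ii) is immediate by estimating $|\int_M f\,d\mu_{g(t)}|\le A\|f\|_{L^\infty(M,\bar g)}$, and (iii) follows because replacing $f$ by $f+c$ changes $\alpha(t)$ to $\alpha(t)+c$, leaving the right-hand side of \eqref{PCFVolNew1} unchanged.

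For (iv), the key computation rewrites the derivative of the energy as a manifest square. Testing the flow equation against $(-\Delta_{\bar g} u + \bar K - f e^{2u}) = e^{2u}(K_{g(t)} - f)$ gives
\[
\frac{d}{dt}E_f(u(t)) = \int_M (f - K_{g(t)} - \alpha(t))(K_{g(t)}-f)\,d\mu_{g(t)}.
\]
Expanding, using that $\int_M (K_{g(t)}-f)\,d\mu_{g(t)} = \bar K - \int_M f\,d\mu_{g(t)} = -A\alpha(t)$, one sees that the cross term contributes exactly $-A\alpha(t)^2$, producing the clean identity $\frac{d}{dt}E_f(u(t)) = -\int_M (f-K_{g(t)}-\alpha(t))^2\,d\mu_{g(t)} = -\int_M (\partial_t u)^2 \,d\mu_{g(t)}\le 0$. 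I expect (iv) to be the main structural step — it is the point where the additive normalisation $\alpha(t)$ must be chosen exactly as in \eqref{eq:definition-alpha} for the energy to decrease, and it underpins everything afterwards.

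For (v), I would rearrange $E_f(u(t)) \le E_f(u_0)$ to isolate the Dirichlet energy, bounding $\tfrac12\int_M f e^{2u(t)}\,d\mu_{\bar g}\le \tfrac{A}{2}\|f\|_{L^\infty}$ and using the Jensen bound \eqref{Jensen-consequence} on $\bar u(t)$ together with $\bar K<0$ to control $\bar K\,\bar u(t)$ from above; since $E_f(u_0)$ itself splits as a $u_0$-dependent constant plus a term of size $\tfrac{A}{2}\|f\|_{L^\infty}$, this yields \eqref{Unigradbound}. The upper bound in (vi) is again \eqref{Jensen-consequence}; for the lower bound I apply \autoref{Onofri-ungleichung} to $2u(t)$ to obtain $A = e^{2\bar u(t)}\int_M e^{2(u-\bar u)}\,d\mu_{\bar g}\le e^{2\bar u(t)}C_{\text{MT}} \exp(4\eta\|\nabla_{\bar g} u(t)\|_{L^2}^2)$, then take logs and insert \eqref{Unigradbound}. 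Finally, (vii) follows by the same Moser--Trudinger manipulation applied to $2pu(t)$: the factor $e^{2p\bar u(t)}$ is controlled via \eqref{UniBarU} (using the upper bound when $p\ge 0$ and the lower bound when $p<0$), and the exponential of $\|\nabla_{\bar g}u(t)\|_{L^2}^2$ is controlled via \eqref{Unigradbound}; both contributions are of the form $\nu_0 e^{\nu_1\|f\|_{L^\infty}}$ as required.
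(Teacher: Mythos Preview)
Your proof is correct and follows essentially the same route as the paper. The only differences are cosmetic: in (iv) the paper inserts $\alpha(t)$ via the orthogonality $\int_M \partial_t u\,d\mu_{g(t)}=0$ (from (i)) rather than completing the square explicitly, and in (vii) your case split on the sign of $p$ is in fact more careful than the paper's argument, which as written bounds $e^{2p\bar u(t)}\le e^{p\log A}$ and thus tacitly assumes $p\ge 0$.
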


\begin{proof}
  1. Let $h(t)=\frac{1}{2}\bigl(\vol_{g(t)}-A\bigr)$. Then by \eqref{VolPres} we have  
  \begin{align*}
    \dot h(t) &=  \frac12\frac{d}{dt}\vol_{g(t)} = \int_Mf d\mu_{g(t)} -\alpha(t)\vol_{g(t)} - \bar K 
                =  \left(\int_Mf d\mu_{g(t)} - \bar K\right)\left(1- \frac{\vol_{g(t)}}{A}\right)\\
              &= \frac{2}{A}\left(\int_Mf d\mu_{g(t)} - \bar K\right) h(t) \qquad \text{for $t \in (0,T).$}
  \end{align*}
  Since $h$ is continuous in $0$ and $h(0)=0$, Gronwall's inequality (see e.g.~\cite{CazHar99}) implies that $h(t)=0$ and therefore $\vol_{g(t)} = A$ for $t \in [0,T)$.\\
2. follows directly from \eqref{eq:definition-alpha}.\\
To show 3., we note that replacing $f$ by $f+ c$ in \eqref{PCFVolNew1} gives
\[f+ c-K_{g(t)}-\frac1A\left(\int_M(f+ c)d\mu_{g(t)}-\bar K\right)=f-K_{g(t)}-\frac1A\left(\int_Mfd\mu_{g(t)}-\bar K\right)=\partial_tu(t),\]
so the equation remains unchanged.\\
To see 4., we use \eqref{VolPres} and get
\begin{equation}\label{EnergyEstimate}
\begin{split}
\frac{d}{dt}E_{f}(u(t))&=\int_M(-\Delta_{\bar g}u(t)+\bar K-f\e^{2u(t)})\partial_tu(t)d\mu_{\bar g}\\
&=\int_M((-\Delta_{\bar g}u(t)+\bar K)\e^{-2u(t)}-f)\e^{2u(t)}\partial_tu(t)d\mu_{\bar g}\\
&=\int_M((-\Delta_{\bar g}u(t)+\bar K)\e^{-2u(t)}-f)\partial_tu(t)d\mu_{g(t)}\\
&=\int_M(K_{g(t)}-f)\partial_tu(t)d\mu_{g(t)}=\int_M(K_{g(t)}-f+\alpha(t))\partial_tu(t)d\mu_{g(t)}\\
&=-\int_M|\partial_tu(t)|^2d\mu_{g(t)}\le0.
\end{split}
\end{equation}
Therefore, we have 
\begin{equation}\label{APrioriBound}
E_{f}(u(\tau))+\int_0^\tau\int_M|\partial_tu(t)|^2d\mu_{g(t)}dt= E_{f}(u(0)) \qquad \text{for $0 < \tau < T$.}
\end{equation}
\\
5. Since $u(t)\in \mathcal{C}_A$ for $t \in [0,T)$ by 1., we may use 4., \eqref{jensen-consequence} and \eqref{eq:C-A-energy-ineq} to observe that 
\begin{equation}\label{EstimateNabla1}
\begin{split}
\|\nabla_{\bar g}u(t)\|^2_{L^2(M,\bar g)}&=2E_{f}(u(t))-\int_M(2\bar Ku(t)-f\e^{2u(t)})d\mu_{\bar g}\\
&=2E_{f}(u(t))+\int_M(2|\bar K|u(t)+f\e^{2u(t)})d\mu_{\bar g}\\
&\le 2 E_{f}(u_0)+|\bar K|\log(A)+ A \|f\|_{L^\infty(M,\bar g)}\\
&\le \|\nabla u_0 \|_{L^2(M,\bar g)}^2 + |\bar K|\Bigl(\log(A)+ 2\|u_0\|_{L^1(M,\bar g)}\Bigr)  +  2 A \|f\|_{L^\infty(M,\bar g)}\\
&\le c_0 + c_1 \|f\|_{L^\infty(M,\bar g)} \qquad \text{for $t \in [0,T)$.}
\end{split}
\end{equation}
with constants $c_0,c_1>0$ depending only on $u_0$ (recall here that $A = \int_M \e^{2u_0(t)}d\mu_{\bar g}$).\\
6. With \eqref{Unigradbound} and \autoref{Onofri-ungleichung} we can estimate 
\begin{align*}
  A=\int_M\e^{2u(t)}d\mu_{\bar g}=\e^{2\bar u(t)}\int_M\e^{2(u(t)-\bar u(t))}d\mu_{\bar g} &\le\e^{2\bar u(t)}C_{\text{MT}}\exp(\eta_1\|\nabla_{\bar g}(2u(t))\|^2_{L^2(M,\bar g)})\\
  &\le\e^{2\bar u(t)}C_{\text{MT}}\exp \bigl(\eta_1(c_1 + c_2 \|f\|_{L^\infty(M,\bar g)})\bigr)
\end{align*}
and therefore
\[\bar u(t)\ge\frac12\log\left( \frac{A}{C_{\text{MT}}}\right) - \frac12\eta_1(c_1 + c_2 \|f\|_{L^\infty(M,\bar g)})= m_0 - m_1 \|f\|_{L^\infty(M,\bar g)}
\] 
with constants $m_0 \in \R$, $m_1 >0$ depending only on $u_0$. Combining this lower bound with the upper bound given by \eqref{Jensen-consequence}, we obtain \eqref{UniBarU}.\\
7.  With \autoref{Onofri-ungleichung}, \eqref{jensen-consequence}, and \eqref{EstimateNabla1} we directly get for any $p\in\R$ that
\begin{align*}
\int_M\e^{2pu(t)}d\mu_{\bar g}=\e^{2p\bar u(t)}\int_M\e^{2p(u(t)-\bar u(t))}d\mu_{\bar g} &\le
\e^{ p \log(A)}C_{\text{MT}}\exp(4\eta_2p^2\|\nabla_{\bar g}u(t)\|^2_{L^2(M,\bar g)})\\
&\le A^p C_{\text{MT}}\exp\Bigl((4\eta_2p^2\bigl(c_1 + c_2 \|f\|_{L^\infty(M,\bar g)}\bigr) \Bigr)\\
&\le C_{\text{MT}} A^{p}\e^{4\eta_2p^2 c_1} \exp \bigl(4\eta_2p^2 c_2 \|f\|_{L^\infty(M,\bar g)}\bigr)\\
&= \nu_0 \e^{\nu_1 \|f\|_{L^\infty(M,\bar g)}}
\end{align*} 
with constants $\nu_i=\nu_i(u_0,p)>0$, $i\in\{0,1\}$.
\end{proof}

\section{Main Results}\label{SectionMainResults}
In the following, we put 
\begin{equation}
  \label{eq:def-C-p-A}
\mathcal{C}_{p,A}:= W^{2,p}(M,\bar g) \cap \mathcal{C}_{p,A} =  \left\{v\in W^{2,p}(M,\bar g)\mid \int_M\e^{2v}d\mu_{\bar g}=A\right\} \qquad \text{for $p>2,$ $A>0$.}
\end{equation}
The following is our first main result.
\begin{theorem}\label{ShortTimeExistence}
  Let $f\in C^\infty(M)$, $p>2$, and $u_0\in \mathcal{C}_{p,A}$ for a given $A>0$.\\
Then the initial value problem \eqref{PCFVolNew1}, \eqref{PCFVolNew2}
admits a unique global solution
$$
u \in C([0,\infty)\times M)\cap C([0,\infty);H^1(M,\bar g))\cap C^\infty((0,\infty)\times M)
$$
satisfying the energy bound $E_{f}(u(t))\le E_{f}(u_0)$ for all $t \ge 0$.\\ 
Moreover, $u$ is uniformly bounded in the sense that
\[
  \sup_{t>0}\|u(t)\|_{L^\infty(M,\bar g)}<\infty.
\]
Furthermore, if $(t_l)_l \subset (0,\infty)$ is a sequence with $t_l \to \infty$ as $l \to \infty$, then, after passing to a subsequence, $u(t_l)$ converges in $H^2(M,\bar g)$ to a function $u_\infty \in H^2(M,\bar g) \cap \mathcal{C}_{A}$ solving the equation
\begin{equation}\label{PCP-f-lambda}
-\Delta_{\bar g}u_\infty + \bar K = f_\lambda \e^{2u_\infty} \qquad \text{in $M$,}
\end{equation}
where $f_\lambda:= f +\lambda$ with
\begin{equation}
  \label{eq:def-lambda}
\lambda=\frac1A\left(\bar K- \int_M f \e^{2u_\infty}d\mu_{\bar g}\right).
\end{equation}
In other words, $u_\infty$ induces a metric $g_\infty$ with $\vol_{g_\infty}= A$ and Gauss curvature $K_{g_\infty}$ satisfying
\begin{equation}
  \label{eq:def-f-lambda}
K_{g_\infty}(x)= f_{\lambda}(x)=f(x)+\lambda\quad\text{for}\quad x\in M.
\end{equation}
\end{theorem}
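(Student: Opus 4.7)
I would split the argument into four steps: short-time existence via quasilinear parabolic theory, a time-uniform $L^\infty$-bound, global existence and smoothing, and extraction of a convergent subsequence as $t\to\infty$. The main obstacle will be the second step, as explained below.

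First, I would rewrite \eqref{PCFVolNew1} as the quasilinear parabolic equation
$$
\partial_t u-\e^{-2u}\Delta_{\bar g} u=f-\bar K\,\e^{-2u}-\alpha(t)
$$
with strictly positive principal coefficient $\e^{-2u}$. Since $p>2$, the embedding $W^{2,p}(M,\bar g)\hookrightarrow C^0(M)$ makes $\e^{\pm 2u_0}$ bounded, and the equation is uniformly parabolic near $u_0$. Freezing $u$ in the coefficient and in the nonlocal term $\alpha(u)$ from \eqref{eq:definition-alpha}, and viewing the remaining expression as a forcing term, a standard Banach contraction argument in $C([0,\tau];W^{2,p}(M,\bar g))$ based on linear parabolic $L^p$-theory (together with the regularity estimates from the appendix) should yield a unique maximal solution $u\in C([0,T^*);W^{2,p})\cap C^1((0,T^*);L^p)$ for some $T^*\in(0,\infty]$, and parabolic bootstrapping would upgrade it to $C^\infty((0,T^*)\times M)$.

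\autoref{Properties1} already supplies, on $[0,T^*)$, time-uniform $H^1$ control, bounds on $\bar u(t)$ and $\alpha(t)$, and $L^q$ bounds on $\e^{p u(t)}$ for every $p\in\R$ and $q\ge 1$. The delicate step is to upgrade these integrability estimates to a time-uniform pointwise bound $\|u(t)\|_{L^\infty(M,\bar g)}\le C$. I would attempt this via a parabolic De Giorgi--Nash--Moser iteration on the linearised equation $\partial_t u-\e^{-2u}\Delta_{\bar g} u=r$ with $r:=f-\bar K\e^{-2u}-\alpha(t)$, exploiting that both $\e^{\pm 2u}$ and $r$ are uniformly $L^q$ for every $q$ via Moser--Trudinger \eqref{OU}. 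Once $u$ is uniformly $L^\infty$, the principal coefficient is bounded above and below and inherits H\"older continuity by interpolation with the $H^1$-control, so the Schauder-type estimates of the appendix give time-uniform $C^{k,\alpha}$ bounds on $u$ for every $k$. This in particular prevents blow-up of $\|u(t)\|_{W^{2,p}}$ at a finite $T^*$ and forces $T^*=\infty$. I expect this $L^\infty$ step to be the main obstacle: the bounds in \autoref{Properties1} are all integral, while a pointwise bound is needed both to extend globally and to obtain strong compactness, and the quasilinear structure requires a careful interplay between the Moser iteration, the Moser--Trudinger inequality, and the volume-preservation constraint $u(t)\in\mathcal{C}_A$.

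For the long-time behaviour, the identity \eqref{APrioriBound} combined with a uniform lower bound for $E_f(u(t))$ (coming from the just-obtained $L^\infty$ bound together with \eqref{Jensen-consequence}) gives
$$
\int_0^\infty\!\int_M|\partial_t u(t)|^2\,d\mu_{g(t)}\,dt<\infty.
$$
Hence, given any sequence $t_l\to\infty$, a first subsequence extraction yields $\|\partial_t u(t_l)\|_{L^2(M,g(t_l))}\to 0$, and the time-uniform $C^{2,\alpha}$ bound allows a further extraction along which $u(t_l)\to u_\infty$ in $H^2(M,\bar g)$ (indeed in $C^2(M)$) for some $u_\infty\in H^2(M,\bar g)\cap\mathcal{C}_A$. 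Dominated convergence then gives $\alpha(t_l)\to\alpha_\infty:=\frac1A\bigl(\int_M f\,\e^{2u_\infty}d\mu_{\bar g}-\bar K\bigr)$, so passing to the limit in \eqref{PCFVolNew1} yields $0=f-K_{g_\infty}-\alpha_\infty$, that is $K_{g_\infty}=f+\lambda$ with $\lambda=-\alpha_\infty$, matching \eqref{eq:def-lambda}. Rewriting with the Gauss equation \eqref{GaussEquation} produces exactly \eqref{PCP-f-lambda}.
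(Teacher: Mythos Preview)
Your overall four-step structure matches the paper's, but two of the proposed arguments contain genuine gaps.

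First, the De Giorgi--Nash--Moser route to the uniform $L^\infty$ bound is circular: the standard iteration requires the principal coefficient $\e^{-2u}$ to be uniformly bounded above and below, which is precisely the $L^\infty$ control on $u$ you are trying to establish. Having $\e^{\pm 2u}\in L^q$ uniformly for every $q$ (from \autoref{Properties1}) does not suffice for the usual parabolic Moser iteration on a non-divergence equation. The paper circumvents this entirely. It first proves a \emph{local-in-time} bound $\|u(t)\|_{L^\infty}\le\mathcal M(\|u_0\|_{L^\infty},\|f\|_{L^\infty},T)$ via a direct maximum-principle comparison (\autoref{UpperULowerK}), and then uses the energy dissipation \eqref{APrioriBound} to find, in every unit interval $[T,T+1]$, a ``good time'' $t_T$ at which $\int_M|\partial_t u|^2\,d\mu_g$ is uniformly small; through the flow equation this yields a uniform $W^{2,3/2}\hookrightarrow L^\infty$ bound on $u(t_T)$. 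Restarting the local-in-time estimate from $t_T$ with a fixed window then gives the global uniform bound (\autoref{Linfty}).

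Second, your convergence step has a logical gap. From $\int_0^\infty F(t)\,dt<\infty$ with $F(t)=\int_M|\partial_t u|^2\,d\mu_{g(t)}$ you only get $\liminf_{t\to\infty}F(t)=0$; for an \emph{arbitrary} prescribed sequence $t_l\to\infty$ there is no reason any subsequence should satisfy $F(t_l)\to 0$ (consider $F$ supported on shrinking intervals around the $t_l$). The paper therefore proves the stronger statement $F(t)\to 0$ as $t\to\infty$ (\autoref{KonvergenzF}) by deriving a differential inequality $\frac{d}{dt}F\le \tilde C_1 F+\tilde C_2 F^2$ from the evolution equations for $K_{g(t)}$ and $\alpha(t)$ and combining it with the integrability of $F$. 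Relatedly, the paper neither establishes nor needs the time-uniform $C^{2,\alpha}$ bounds you invoke: once $F(t)\to 0$ and $\sup_t\|u(t)\|_{L^\infty}<\infty$, it extracts a weak $H^1$-limit, upgrades $\e^{\pm u_l}\to\e^{\pm u_\infty}$ in every $L^p$ via the $L^\infty$ bound, reads off $L^2$-convergence of $\Delta_{\bar g}u_l$ directly from the flow equation, and uses closedness of $\Delta_{\bar g}$ to conclude strong $H^2$-convergence.
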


Some remarks are in order.

\begin{remark}
  \label{convergence-remark}
  It follows in a standard way that, under the assumptions of \autoref{ShortTimeExistence}, the $\omega$-limit set
  $$
  \omega(u_0):= \bigcap_{T>0} \overline{ \{u(t)\::\: T \le t < \infty\}}
  $$
  is a compact connected subset of $H^2(M,\bar g) \cap \mathcal{C}_{A}$ (with respect to the $H^2$-topology) consisting of solutions of \eqref{PCP-f-lambda}, \eqref{eq:def-lambda}, which are precisely the critical points of the restriction of the energy functional $E_f$ to $\mathcal{C}_{A}$.
  
In particular, the connectedness implies that, if $u_\infty$ in \autoref{ShortTimeExistence} is an isolated critical point in $\mathcal{C}_{A}$, then $\omega(u_0)= \{u_\infty\}$ and therefore we have the full convergence of the flow line
\begin{equation}
  \label{eq:fullconvergence-flow-line}
u(t) \to u_\infty \quad \text{in $H^2(M,\bar g)$}\quad \text{as $t \to \infty$.}
\end{equation}
In particular, \eqref{eq:fullconvergence-flow-line} holds if $u_\infty$ is a strict local minimum of the restriction of $E_f$ to $\mathcal{C}_{A}$.
\end{remark}

\begin{remark}
For functions $f<0$, the convergence of the flow \eqref{PCFnegative} is shown in \cite{Ho11}. For the additively rescaled flow \eqref{PCFVolNew1} with initial data \eqref{PCFVolNew2} we get convergence for arbitrary functions $f\in C^\infty(M)$. In general we do not have any information about $\lambda$ and therefore no information about the sign of $f_{\lambda}$ in \autoref{ShortTimeExistence}. On the other hand, more information can be derived for certain functions $f \in C^\infty(M)$ and certain values of $A>0$.

\begin{enumerate}
\item[(i)] In the case where $A \le - \frac{\bar K}{\|f\|_{L^\infty(M,\bar g)}}$, it follows that  
$$
\lambda= \frac1A\left(\bar K- \int_M f \e^{2u}d\mu_{\bar g}\right) \le \frac{\bar K}{A} + \frac{\|f\|_{L^\infty(M,\bar g)}}{A} \int_M \e^{2u}d\mu_{\bar g}
= \frac{\bar K}{A} + \|f\|_{L^\infty(M,\bar g)} \le 0
$$
for every solution $u \in \mathcal{C}_{2,A}:=\left\{v\in H^2(M,\bar g)\mid\int_M\e^{2v}d\mu_{\bar g}=0\right\}$ of the static problem \eqref{PCP-f-lambda}, and therefore this also applies to $\lambda$ in \autoref{ShortTimeExistence} in this case.

\item[(ii)] The following theorems show that $f_{\lambda}$ in \autoref{ShortTimeExistence} may change sign if
$A > - \frac{\bar K}{\|f\|_{L^\infty(M,\bar g)}}$, so in this case we get a solution of the static problem \eqref{PCP} for sign-changing functions $f\in C^\infty(M)$ by using the additively rescaled prescribed Gauss curvature flow \eqref{PCFVolNew1}.
\end{enumerate}
\end{remark}

\begin{theorem}
  \label{sign-changing}
  Let $p>2$. For every $A>0$ and $c> - \frac{\bar K}{A}$ there exists $\varepsilon= \varepsilon(c,A,\bar K)>0$ with the following property.

  If  $u_0 \equiv \frac{1}{2}\log(A) \in \mathcal{C}_{p,A}$ and $f\in C^\infty(M)$
  with $-c \le f \le 0$ and $\|f+c\|_{L^1(M,\bar g)} < \varepsilon$ are chosen in \autoref{ShortTimeExistence}, then the value $\lambda$ defined in \eqref{eq:def-lambda} is positive. 

In particular, if $f$ has zeros on $M$, then $f_\lambda$ in \eqref{eq:def-f-lambda} is sign changing.
\end{theorem}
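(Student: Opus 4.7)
The plan is to treat the statement as a perturbation argument around the constant function $f\equiv-c$. First I would verify that $f\equiv-c$ together with the initial datum $u_0=\tfrac12\log(A)$ makes the flow \eqref{PCFVolNew1} stationary: since $\Delta_{\bar g}u_0=0$ and $\e^{2u_0}\equiv A$, we have $K_{g_0}=\bar K/A$, while the normalisation in \eqref{eq:definition-alpha} evaluates to $\alpha\equiv-c-\bar K/A$, so that $f-K_{g_0}-\alpha=0$. By the uniqueness part of \autoref{ShortTimeExistence}, $u(t)\equiv u_0$ for all $t\ge0$; thus $u_\infty=u_0$ and \eqref{eq:def-lambda} produces $\lambda=c+\bar K/A$, which is strictly positive by the assumption $c>-\bar K/A$. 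The remainder of the argument is to show that this strict positivity persists when $f$ is $L^1$-close to $-c$.

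For a general admissible $f$, let $u_\infty\in\mathcal{C}_{A}$ be any subsequential flow limit supplied by \autoref{ShortTimeExistence}. Integrating the static equation \eqref{PCP-f-lambda} over $M$ against $d\mu_{\bar g}$ and using $u_\infty\in\mathcal{C}_{A}$ yields the identity
\[
\lambda \;=\; c+\frac{\bar K}{A}\;-\;\frac1A\int_M(f+c)\,\e^{2u_\infty}d\mu_{\bar g}.
\]
The goal is to prove that the last integral is small. For any conjugate pair $q,q'\in(1,\infty)$, Hölder's inequality combined with the interpolation bound $\|f+c\|_{L^{q'}(M,\bar g)}\le\|f+c\|_{L^\infty}^{1-1/q'}\|f+c\|_{L^1}^{1/q'}\le c^{\,1-1/q'}\varepsilon^{\,1/q'}$ (using $0\le f+c\le c$ and $\|f+c\|_{L^1}<\varepsilon$) gives
\[
\Bigl|\int_M(f+c)\,\e^{2u_\infty}d\mu_{\bar g}\Bigr|\;\le\;c^{\,1-1/q'}\varepsilon^{\,1/q'}\,\|\e^{2u_\infty}\|_{L^q(M,\bar g)}.
\]

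The heart of the matter, and the principal obstacle, is to bound $\|\e^{2u_\infty}\|_{L^q}$ uniformly in $f$: $u_\infty$ is only known to be some critical point of $E_f|_{\mathcal{C}_A}$ obtained dynamically and is not characterised uniquely by $f$, so any estimate on it has to be read off from the flow rather than from the elliptic problem. The Moser--Trudinger-based estimate in \autoref{Properties1}(7) is precisely what resolves this: since $u_0=\tfrac12\log(A)$ is fixed and $\|f\|_{L^\infty}\le c$, it yields $\int_M\e^{2qu(t)}d\mu_{\bar g}\le\nu_0\e^{\nu_1 c}$ along the flow with constants $\nu_0,\nu_1$ depending only on $A,c,q$; this bound transfers to $u_\infty$ via Fatou's lemma applied to the $H^2$-convergent subsequence $u(t_l)\to u_\infty$. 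Combining with the previous display produces
\[
\lambda \;\ge\; c+\frac{\bar K}{A}\;-\;C(c,A)\,\varepsilon^{\,1/q'},
\]
and choosing $\varepsilon=\varepsilon(c,A,\bar K)>0$ small enough gives $\lambda>0$. Finally, for the sign-changing addendum, $\lambda>0$ immediately gives $f_\lambda(x_0)=\lambda>0$ at any zero $x_0$ of $f$, while Chebyshev's inequality applied to the nonnegative function $f+c$ yields $\mu_{\bar g}(\{f>-c+\delta\})<\varepsilon/\delta$; choosing $\delta=|\bar K|/(2A)$ and further shrinking $\varepsilon$ so that $\lambda<c-\delta$ provides some $x_1\in M$ with $f(x_1)\le-c+\delta<-\lambda$, so that $f_\lambda(x_1)<0$.
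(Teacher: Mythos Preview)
Your argument is correct. The overall strategy matches the paper's: rewrite $\lambda=c+\bar K/A-\tfrac1A\int_M(f+c)\e^{2u_\infty}d\mu_{\bar g}$ and bound the integral by something that tends to zero with $\|f+c\|_{L^1}$, using that the relevant constants depend only on $A$, $c$, and $\bar K$ once $u_0=\tfrac12\log(A)$ is fixed. The substantive difference lies in which a priori estimate you invoke. The paper appeals to the uniform $L^\infty$ bound on the flow (\autoref{Linfty}), which gives $\|u_\infty\|_{L^\infty}\le\tau(A,c)$ and hence the clean $L^1$--$L^\infty$ estimate $\int_M(f+c)\e^{2u_\infty}\le\e^{2\tau}\|f+c\|_{L^1}$, yielding an explicit $\varepsilon=(c+\bar K/A)\e^{-2\tau}$. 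You instead use only the Moser--Trudinger estimate of \autoref{Properties1}(7), pair it with an $L^{q'}$--$L^q$ H\"older inequality and an interpolation of $\|f+c\|_{L^{q'}}$ between $L^1$ and $L^\infty$, arriving at $\lambda\ge c+\bar K/A-C(c,A)\varepsilon^{1/q'}$. Your route is lighter in that it avoids the delicate $L^\infty$ lemma, at the cost of a less explicit $\varepsilon$ and an extra H\"older/interpolation step.

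For the sign-changing addendum, your Chebyshev argument works; one small remark is that the inequality $\lambda<c-\delta$ is automatic from $\lambda\le c+\bar K/A=c-|\bar K|/A<c-\delta$ once $\delta=|\bar K|/(2A)$, so the ``further shrinking'' of $\varepsilon$ is needed only to force $\varepsilon<\delta$ (so that the Chebyshev set has measure less than $\vol_{\bar g}=1$). The paper does not spell this part out, since it follows more directly from the Kazdan--Warner necessary condition \eqref{Conditionlambda1}, which gives $\bar f_\lambda<0$ and hence negativity of $f_\lambda$ somewhere without any further constraint on $\varepsilon$.
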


Under fairly general assumptions on $f$, we can prove that $\lambda>0$ if $A$ is sufficiently large and $u_0 \in \mathcal{C}_{p,A}$ is chosen suitably. 

\begin{theorem}
\label{sec:stat-minim-probl-1-cor-2-theorem}
Let $f \in C^\infty(M)$ be nonconstant with $\max_{x\in M} f(x) = 0$. Then there exists $\kappa>0$ with the property that for every $A \ge \kappa$ there exists $u_0 \in \mathcal{C}_{p,A}$ such that the value $\lambda$ defined in \eqref{eq:def-lambda} is positive.
\end{theorem}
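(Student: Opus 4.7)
The plan is to reduce the statement to a purely static existence result: to find, for each sufficiently large $A$, a critical point $u^* \in \mathcal{C}_{p,A}$ of the constrained variational problem $E_f|_{\mathcal{C}_A}$ with a strictly positive Lagrange multiplier $\lambda^* > 0$. Indeed, at such $u^*$ the Euler-Lagrange equation $-\Delta_{\bar g} u^* + \bar K = (f + \lambda^*)\e^{2u^*}$ gives $K_{g^*} = f + \lambda^*$; integrating over $M$ and invoking \eqref{eq:definition-alpha} yields $\alpha(u^*) = -\lambda^*$, so $\partial_t u|_{u = u^*} = f - K_{g^*} - \alpha(u^*) = 0$. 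The constant-in-time function $u(t) \equiv u^*$ therefore solves \eqref{PCFVolNew1}, \eqref{PCFVolNew2} with initial datum $u_0 = u^*$, and by the uniqueness in \autoref{ShortTimeExistence} this is the unique flow solution; consequently $u_\infty = u^*$, and the $\lambda$ in \eqref{eq:def-lambda} equals $\lambda^*$.

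Next I will invoke the static analysis from \autoref{sec:stat-minim-probl}. The functional $E_f|_{\mathcal{C}_A}$ is coercive in $H^1$ thanks to \autoref{Onofri-ungleichung} combined with the sign condition $f \le 0$, and weakly lower semicontinuous by Moser-Trudinger compactness of $u \mapsto \e^{2u}$; a minimizer $u_A^* \in \mathcal{C}_A$ therefore exists, and by elliptic regularity lies in $C^\infty(M) \subset W^{2,p}(M) \cap \mathcal{C}_A = \mathcal{C}_{p,A}$. Its Lagrange multiplier $\lambda_A^* = \frac{1}{A}\bigl(\bar K - \int_M f\e^{2u_A^*}d\mu_{\bar g}\bigr)$ coincides, via a standard envelope identity, with $2m'(A)$ where $m(A) := \min_{\mathcal{C}_A} E_f$. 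Under the hypothesis $\max f = 0$ with $f$ nonconstant, a carefully chosen Moser-type test function---concentrating mass in a manner that exploits the nonconstancy of $f$---shows that $m$ is strictly increasing for $A \ge \kappa$ with some $\kappa > 0$, whence $\lambda_A^* > 0$. Taking $u_0 := u_A^*$ then completes the proof.

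The hard part will be the positivity of $\lambda_A^*$ at the minimizer for $A$ large (equivalently, the strict monotonicity of $m(A)$ for large $A$). A priori, minimizing sequences in $\mathcal{C}_A$ could concentrate mass near $\arg\max f$, where $|f|$ vanishes, and a naive bubbling analysis would then suggest $\lambda_A^* \to 0^-$ as $A \to \infty$; ruling this out demands a sharp Moser-Trudinger-type balance of the competing contributions $\|\nabla u\|_{L^2}^2$, $\bar K\bar u$, and $\int_M |f|\e^{2u}d\mu_{\bar g}$ to $E_f$, and it is precisely here that the nonconstancy of $f$ enters essentially.
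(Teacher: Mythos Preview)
Your overall strategy---produce a static constrained critical point $u^*\in\mathcal C_{p,A}$ with positive multiplier $\lambda^*>0$, then take $u_0=u^*$ so the flow is constant and $u_\infty=u^*$---is sound and differs from the paper's route. The paper instead chooses any $u_0$ with $E_f(u_0)<\frac{\lambda_\sharp A}{2}$ (such $u_0$ exist for large $A$ by the test-function estimate of \autoref{m-A-est}), runs the flow \emph{nontrivially} to obtain $u_\infty$ satisfying the same energy bound, and then invokes \autoref{sec:stat-minim-probl-1-cor-1} to conclude $0<\lambda<\lambda_\sharp$. That route buys extra information: the limit $u_\infty$ is shown to be not weakly stable, hence distinct from the branch $u_\lambda$ of local minimisers of $E_{f_\lambda}$ (\autoref{sec:stat-minim-probl-1-cor-2}); your approach via the constrained minimiser of $E_f|_{\mathcal C_A}$ does not recover this.

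There is, however, a genuine gap in your argument for the key step $\lambda_A^*>0$. The envelope identity $\lambda_A^*=2m'(A)$ presupposes differentiability of $m$ and smooth dependence of the minimiser on $A$, neither of which you establish; and your proposed proof of strict monotonicity of $m$ via a ``carefully chosen Moser-type test function'' is not supplied and has no evident direct route. In fact $m$ is \emph{decreasing} for small $A$ (there the constrained minimiser is the unique solution $u_\lambda$ for some $\lambda\le 0$), so a global test-function argument cannot work, and the bubbling scenario you worry about is a red herring. The far simpler argument you are missing---and which underlies \autoref{sec:stat-minim-probl-1-cor-1} via the volume threshold $V(\lambda_\sharp)$---is: for $\lambda\le 0$ one has $f_\lambda=f+\lambda\le 0$ nonconstant, so $E_{f_\lambda}$ is strictly convex and the static equation has a \emph{unique} solution $u_\lambda$, of volume $V(\lambda)\le V(0)<\infty$. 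Hence for $A>V(0)$ no solution in $\mathcal C_A$ can have multiplier $\le 0$, and your constrained minimiser (whose existence by the direct method is fine) automatically satisfies $\lambda_A^*>0$. No sharp Moser--Trudinger balance is needed.
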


In fact we have even more information on the associated limit $u_\infty$ in this case, see \autoref{sec:stat-minim-probl-1-cor-2} below.

It remains open how large $\lambda$ can be depending on $A$ and $f$. The only upper bound we have is 
\begin{equation}\label{Conditionlambda1}
\lambda<- \int_M f d\mu_{\bar g},
\end{equation}
since we must have 
\[
\bar f_\lambda=\frac{1}{\vol_{\bar g}}\int_M f_\lambda d\mu_{\bar g}=\int_M f d\mu_{\bar g}+\lambda\overset{!}{<}0,
\]
so that $f_\lambda$ fulfills the necessary condition \eqref{Condition2} provided by Kazdan and Warner in \cite{KazWar74_1}.

\section{The static Minimisation Problem with Volume Constraint}
\label{sec:stat-minim-probl}
To obtain additional information on the limiting function $u_\infty$ and the value $\lambda \in \R$ associated to it by \eqref{eq:def-lambda} and \eqref{eq:def-f-lambda}, we need to consider the associated static setting for the prescribed Gauss curvature problem with the additional condition of prescribed volume. In this setting, we wish to find, for given $f \in C^\infty(M)$ and $A>0$, critical points of the restriction of the functional $E_{f}$ defined in \eqref{Ef} to the set $\mathcal{C}_{A}$ defined in \eqref{NB}. A critical point $u \in \mathcal{C}_{A}$ of this restriction is
a solution of \eqref{PCP-f-lambda} for some $\lambda \in \R$, where, here and in the following, we put again
$f_\lambda:= f + \lambda \in C^\infty(M)$. In other words, such a critical point induces, similarly as the limit $u_\infty$ in \autoref{ShortTimeExistence}, a metric $g^u$ with Gauss curvature $K_{g^u}$ satisfying $K_{g^u}(x)= f_{\lambda}(x)=f(x)+\lambda.$
The unknown $\lambda \in \R$ arises in this context as a Lagrange multiplier and is a posteriori characterised again by 
$$
\lambda=\frac1A\left(\bar K- \int_M f \e^{2u}d\mu_{\bar g}\right).
$$

In the study of critical points of the restriction of $E_{f}$ to $\mathcal{C}_{A}$, it is natural to consider the minimisation problem first. For this we set
$$
m_{f,A} = \inf_{u \in \mathcal{C}_{A}}E_f(u).
$$
We have the following estimates for $m_{f,A}$:
\begin{lemma}
\label{m-A-est}
Let $f \in C^\infty(M)$, $A>0$. Then we have
\begin{equation}
\label{eq:m-A-first-estimate}
m_{f,A} \le \frac{1}{2} \left(\bar K \log(A) - A\int_{M} f d \mu_{\bar g}\right).  \end{equation}
Moreover, if $\max f \ge 0$, then we have
\begin{equation}
\label{eq:m-A-second-estimate}
\limsup_{A \to \infty} \frac{m_{f,A}}{A} \le 0.
\end{equation}
\end{lemma}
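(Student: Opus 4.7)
For the first bound, we plan to evaluate $E_f$ on the constant function $u_0 \equiv \tfrac{1}{2}\log A$, which lies in $\mathcal{C}_{A}$ because $\vol_{\bar g}=1$. The gradient contribution vanishes, the linear term gives $\bar K \log A$ (again using $\vol_{\bar g}=1$), and the exponential term gives $A \int_M f \, d\mu_{\bar g}$, so $E_f(u_0)=\tfrac{1}{2}(\bar K\log A - A\int_M f\, d\mu_{\bar g})$, yielding \eqref{eq:m-A-first-estimate}.

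For \eqref{eq:m-A-second-estimate} the plan is to separate scale from shape by taking competitors of the form $u_A = \tfrac{1}{2}\log A + \psi$ with $\psi \in H^1(M,\bar g)$ normalised so that $\int_M \e^{2\psi}d\mu_{\bar g}=1$; then $u_A \in \mathcal{C}_{A}$ for every $A>0$. Expanding $E_f(u_A)/A$ shows that, for any \emph{fixed} such $\psi$, the gradient, mean-value and $\log A$ contributions all tend to $0$ as $A \to \infty$, leaving $\limsup_{A\to\infty} E_f(u_A)/A = -\tfrac{1}{2}\int_M f\e^{2\psi}d\mu_{\bar g}$. Hence it suffices to construct, for each $\delta>0$, a fixed $\psi$ with $\int_M f\e^{2\psi}d\mu_{\bar g} \ge -\delta$; then $\limsup_{A\to\infty} m_{f,A}/A \le \delta/2$, and $\delta \downarrow 0$ yields the claim.

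The construction of such a $\psi$ exploits the assumption $\max f \ge 0$ via concentration near a maximiser. By continuity, pick $x_0 \in M$ with $f(x_0) \ge 0$ and a geodesic ball $B_r(x_0)$ on which $f \ge -\delta/2$. Take a smooth cutoff $\eta$ with $\eta \equiv 1$ on $B_{r/2}(x_0)$ and $\eta \equiv 0$ outside $B_r(x_0)$, and set $\psi_n := n\eta + d_n$ with $d_n \in \R$ chosen so that $\int_M \e^{2\psi_n}d\mu_{\bar g}=1$. An elementary comparison of $\int_{B_{r/2}(x_0)}\e^{2\psi_n}d\mu_{\bar g}$ with $\int_{M\setminus B_r(x_0)}\e^{2\psi_n}d\mu_{\bar g}$ shows that the latter tends to $0$ as $n\to\infty$, while the former stays bounded below; splitting $\int_M f\e^{2\psi_n}d\mu_{\bar g}$ over $B_r(x_0)$ and its complement and bounding the complement by $\|f\|_{L^\infty(M,\bar g)}$ times a vanishing mass then gives $\int_M f\e^{2\psi_n}d\mu_{\bar g} \ge -\delta$ for $n$ sufficiently large. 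The only genuinely technical step is this concentration estimate — a standard Moser/bump-function computation, which I expect to be the main (but mild) obstacle; the rest reduces to the testing already described.
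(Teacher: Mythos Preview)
Your proposal is correct. For \eqref{eq:m-A-first-estimate} you and the paper do exactly the same thing. For \eqref{eq:m-A-second-estimate} the core idea---concentrate the competitor near a point where $f$ is close to its nonnegative maximum via a bump function, then split the integral $\int_M f\e^{2u}d\mu_{\bar g}$ into a region where $f\ge -\varepsilon$ and a region of vanishing mass---is also the same, but the paper organises it a bit more economically: instead of your additive scale/shape splitting $u_A=\tfrac12\log A+\psi$ followed by a separate $n\to\infty$ construction of a good $\psi$, the paper works directly with the one-parameter family $\tau_A\psi$, where $\psi$ is a fixed bump supported in $\{f\ge-\varepsilon\}$ and $\tau_A$ is chosen so that $\int_M\e^{2\tau_A\psi}d\mu_{\bar g}=A$ (hence $\tau_A=O(\log A)$). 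This folds your two limits ($n\to\infty$, then $A\to\infty$) into one; on the other hand, your decomposition has the minor conceptual advantage that the limiting value $-\tfrac12\int_M f\e^{2\psi}d\mu_{\bar g}$ is already visible before any concentration is performed, making transparent exactly what the concentration step needs to achieve.
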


\begin{proof}
Let $u_0(A)\equiv\frac12\log(A)$, so that $\int_M\e^{2u_0(A)}d\mu_{\bar g}=A$. Hence $u_0(A)$ is the (unique) constant function in $\mathcal{C}_{A}$, and 
\begin{align*}
m_{f,A}&\le E_{f}(u_0(A))=\frac12\int_M(|\nabla_{\bar g}u_0(A)|^2_{\bar g}+2\bar Ku_0(A)-f\e^{2u_0(A)})d\mu_{\bar g}\\
&=\frac12\int_M(\bar K\log(A)-f A)d\mu_{\bar g} = \frac{1}{2} \left(\bar K \log(A) - A\int_{M} f d \mu_{\bar g}\right).
\end{align*}
This shows \eqref{eq:m-A-first-estimate}. To show \eqref{eq:m-A-second-estimate}, we let $\varepsilon>0$. Since $f \in C^\infty(M)$ and $\max f \ge 0$ by assumption, there exists an open set $\Omega \subset M$ with $f \ge -\varepsilon$ on $\Omega$. Next, let $\psi \in C^\infty(M)$, $\psi \ge 0$, be a function supported in $\Omega$ and with
$\|\psi\|_{L^\infty(M,\bar g)} = 2$. Consequently, the set $\Omega':= \{x \in M\mid \psi >1\}$ is a nonempty open subset of $\Omega$, and therefore $\mu_{\bar g}(\Omega')>0$.

Next we consider the continuous function
$$
h: [0,\infty) \to [0,\infty);\quad h(\tau)= \int_M\e^{2 \tau \psi}d\mu_{\bar g}
$$
and we note that $h(0)= \int_M d\mu_{\bar g}=1$, and that
$$
h(\tau) \ge \int_{\Omega'} \e^{2 \tau \psi}d\mu_{\bar g} \ge \e^{2\tau} \mu_{\bar g}(\Omega') \qquad \text{for $\tau \ge 0$.}
$$
Hence for every $A \ge 1$ there exists
\begin{equation}
  \label{eq:tau-A}
0 \le \tau_A  \le \frac{1}{2}\Bigl(\log(A) -\log (\mu_{\bar g}(\Omega'))\Bigr)
\end{equation}
with $h(\tau_A)=A$ and therefore $\tau_A \psi \in \mathcal{C}_{A}$. Consequently,
\begin{align*}
  m_{f,A} &\le E_f(\tau_A \psi) = \frac12 \int_M(|\nabla_{\bar g}\tau_A \psi|^2_{\bar g}+2\bar K \tau_A \psi-f\e^{2\tau_A \psi})d\mu_{\bar g}\\
          &= \tau_A^2 c_1 - \tau_A c_2 -c_3 - \frac{1}{2} \int_{\Omega} f\e^{2\tau_A \psi}d\mu_{\bar g}
\end{align*}
with
$$
c_1 = \frac{1}{2}\int_M |\nabla_{\bar g}\psi|^2_{\bar g}d\mu_{\bar g},\quad c_2 = - \bar K \int_{M} \psi d\mu_{\bar g}  \quad \text{and}\quad c_3 =  \frac{1}{2} \int_{M \setminus \Omega} f d\mu_{\bar g}.
$$
Since $f \ge -\varepsilon$ on $\Omega$, we thus deduce that
$$
m_{f,A} \le \tau_A^2 c_1 -2 \tau_A c_2 +c_3 + \frac{\varepsilon}{2} \int_{\Omega} \e^{2\tau_A \psi}d\mu_{\bar g} \le \tau_A^2 c_1 -2 \tau_A c_2 +c_3 +\frac{\varepsilon A}{2}.
$$
Since $\frac{\tau_A}{A} \to 0$ as $A \to \infty$ by \eqref{eq:tau-A}, we conclude that
$$
\limsup_{A \to \infty} \frac{m_{f,A}}{A} \le \frac{\varepsilon}{2}.
$$
Since $\varepsilon>0$ was chosen arbitrarily, \eqref{eq:m-A-second-estimate} follows. 
\end{proof}

\begin{lemma}
\label{lemma-lagrange-multiplier}
Let $f \in C^\infty(M)$ nonconstant with $\max_{x\in M} f(x) = 0$.  For every $\varepsilon>0$ there exists $\kappa_0>0$ with the following property. If $A \ge \kappa_0$ and $u \in \mathcal{C}_{A}$ is a solution of
\begin{equation}
\label{eq:PCP-lambda}
-\Delta_{\bar g}u + \bar K = (f+\lambda) \e^{2u}
\end{equation}
for some $\lambda \in \R$ with $E_f(u)< \frac{\varepsilon A}{2}$, then we have $\lambda<\varepsilon$. 
\end{lemma}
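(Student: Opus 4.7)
The plan is to derive an algebraic identity for $\lambda A$ by substituting the Euler--Lagrange equation into the energy $E_f(u)$, and then to bound the resulting right--hand side using the strict hypothesis together with Jensen's inequality.

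First I integrate the equation $-\Delta_{\bar g}u+\bar K=(f+\lambda)\e^{2u}$ over $M$; using $\vol_{\bar g}=1$, $\int_M\Delta_{\bar g}u\,d\mu_{\bar g}=0$, and $u\in\mathcal{C}_A$ this gives $\int_Mf\e^{2u}d\mu_{\bar g}=\bar K-\lambda A$. Substituting into
\[
E_f(u)=\tfrac12\|\nabla_{\bar g}u\|_{L^2(M,\bar g)}^2+\bar K\bar u-\tfrac12\int_Mf\e^{2u}d\mu_{\bar g}
\]
and rewriting with $\bar K=-|\bar K|$ yields the key identity
\[
\lambda A=2E_f(u)-\|\nabla_{\bar g}u\|_{L^2(M,\bar g)}^2+|\bar K|(2\bar u-1).
\]

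Next I apply the hypothesis $2E_f(u)<\varepsilon A$, discard the nonpositive term $-\|\nabla_{\bar g}u\|_{L^2(M,\bar g)}^2$, and use the Jensen bound $\bar u\le\tfrac12\log A$ from \eqref{Jensen-consequence} to arrive at
\[
\lambda<\varepsilon+\frac{|\bar K|(\log A-1)}{A}.
\]
Since $(\log A-1)/A\to 0$ as $A\to\infty$, I then choose $\kappa_0=\kappa_0(\varepsilon,\bar K)$ large enough that the correction term is absorbed, yielding $\lambda<\varepsilon$ for $A\ge\kappa_0$.

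The hard part is converting the crude bound $\lambda<\varepsilon+O(\log A/A)$ into the strict inequality $\lambda<\varepsilon$, since the gap $\varepsilon-2E_f(u)/A$ produced by the strict hypothesis is $u$-dependent. The cleanest way to close this gap is to use the full strength of the equation rather than just its integrated form: testing \eqref{eq:PCP-lambda} against $f-\bar f$ and using that $\|f-\bar f\|_{L^2(M,\bar g)}>0$ (which holds because $f$ is nonconstant with $\max f=0$) forces a quantitative lower bound on $\|\nabla_{\bar g}u\|_{L^2(M,\bar g)}^2$ precisely in the near-constant regime $\bar u\approx\tfrac12\log A$, where the Jensen bound is nearly saturated. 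This lower bound more than compensates for the correction $|\bar K|(\log A-1)/A$ and delivers the desired strict estimate. This is where the structural assumption $\max f=0$ with $f$ nonconstant enters the argument in an essential way, distinguishing the lemma from the pure energy identity above.
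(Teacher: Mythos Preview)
Your first two paragraphs are exactly the paper's argument: express $\lambda$ via the integrated equation and the definition of $E_f$, drop the nonpositive gradient term, and bound $\bar u$ by Jensen. The paper writes the key step as $-\tfrac{1}{A}\int_M f\e^{2u}d\mu_{\bar g}=\tfrac{1}{A}\bigl(E_f(u)-\tfrac12\int_M(|\nabla_{\bar g}u|^2+2\bar K u)\,d\mu_{\bar g}\bigr)$, but in fact the right-hand side equals $-\tfrac{1}{2A}\int_M f\e^{2u}d\mu_{\bar g}$; your identity $\lambda A=2E_f(u)-\|\nabla_{\bar g}u\|_{L^2}^2+|\bar K|(2\bar u-1)$ carries the correct factor. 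With the correct factor, both you and the paper's computation arrive only at $\lambda<\varepsilon+\tfrac{|\bar K|(\log A-1)}{A}$, and the paper's clean conclusion $\lambda<\tfrac{\varepsilon}{2}+\tfrac{|\bar K|\log A}{2A}<\varepsilon$ rests on that factor-of-two slip.

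Your third paragraph correctly flags the gap, but your proposed remedy is not a proof. Testing \eqref{eq:PCP-lambda} against $f-\bar f$ gives $\int_M\langle\nabla_{\bar g}f,\nabla_{\bar g}u\rangle_{\bar g}\,d\mu_{\bar g}=\int_M(f-\bar f)(f+\lambda)\e^{2u}d\mu_{\bar g}$, which yields an \emph{upper} bound on $\|\nabla_{\bar g}u\|_{L^2}$ via Cauchy--Schwarz, not the lower bound you need; and you provide no quantitative estimate showing that the forced nonconstancy of $u$ dominates the $O(\log A)$ correction uniformly in $u$ and $A$. Note also that the paper's own proof never uses the hypotheses ``$f$ nonconstant'' or ``$\max f=0$'', so there is no reason to expect them to be the missing ingredient here.

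The honest fix is not structural but arithmetical: with the (correct) factor of two, the same computation under the slightly stronger hypothesis $E_f(u)<\tfrac{\varepsilon A}{4}$ gives $\lambda<\tfrac{\varepsilon}{2}+\tfrac{|\bar K|(\log A-1)}{A}$, and choosing $\kappa_0$ so that the second term is below $\tfrac{\varepsilon}{2}$ yields $\lambda<\varepsilon$. This adjustment is harmless for the applications in \autoref{sec:stat-minim-probl-1-cor-1} and \autoref{sec:stat-minim-probl-1-cor-2}, since there one chooses $u_0$ with $E_f(u_0)$ close to $m_{f,A}$, and by \autoref{m-A-est} one has $m_{f,A}<\tfrac{\lambda_\sharp A}{4}$ for $A$ large anyway.
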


\begin{proof}
For given $\varepsilon>0$, we may choose $\kappa_0>0$ sufficiently large so that
$\frac{|\bar K|}{2} \frac{\log(A)}{|A|}< \frac{\varepsilon}{2}$ for $A \ge \kappa_0$.

Now, let $A \ge \kappa_0$, and let $u \in \mathcal{C}_{A}$ be a solution of \eqref{eq:PCP-lambda} satisfying $E_f(u)< \frac{\varepsilon A}{2}$.
Integrating \eqref{eq:PCP-lambda} over $M$ with respect to $\mu_{\bar g}$ and using that $\vol_{\bar g}(M)=1$ and $\int_{M} \e^{2u}d\mu_{\bar g}=A$, we obtain
\begin{align*}
\lambda &= \frac{1}{A}\left(\bar K - \int_{M} f \e^{2u}d\mu_{\bar g}\right) \le -  \frac{1}{A}\int_{M} f \e^{2u}d\mu_{\bar g} \\
&= \frac{1}{A} \left(E_{f}(u)-\frac12\int_M(|\nabla_{\bar g}u|^2_{\bar g}+2\bar K u)d\mu_{\bar g} \right)\le \frac{1}{A}\left( E_f(u) +|\bar K| \bar u\right)\\
&\le \frac{\varepsilon }{2} + \frac{|\bar K|}{2} \frac{\log(A)}{A} <\varepsilon,
\end{align*}
as claimed. Here we used \eqref{Jensen-consequence} to estimate $\bar u$.
\end{proof}

\begin{proposition}
\label{local-minimizers-sequence}  
Let $f \in C^\infty(M)$ be a nonconstant function with $\max_{x\in M} f(x) =0$.  Moreover, let 
$\lambda_n \to 0^+$ for $n\to\infty$, and let $(u_n)_{n\in\N}$ be a sequence of solutions of
\begin{equation}
\label{eq1:parameter-dep}
-\Delta_{\bar g} u_n + \bar K  = (f +\lambda_n)\e^{2u_n} \quad \text{in $M$}
\end{equation}
which are weakly stable in the sense that
\begin{equation}
\label{eq:weakly-stable}
\int_{M}(|\nabla_{\bar g} h|_{\bar g}^2 -2 (f+\lambda_n) \e^{2u_n} h^2) d \mu_{\bar g} \ge 0 \quad \text{for all $h \in H^1(M)$.}   
\end{equation}
Then $u_n \to u_0$ in $C^2(M)$, where $u_0$ is the unique solution of
\begin{equation}
\label{eq:parameter-dep-limit}
-\Delta_{\bar g} u_0 + \bar K  = f \e^{2 u_0} \qquad \text{in $M$.}
\end{equation}
\end{proposition}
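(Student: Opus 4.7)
The plan is to prove the proposition by a compactness and uniqueness argument carried out in three stages: first establish uniform a priori bounds on $(u_n)_{n\in\N}$ in $C^2(M)$, then extract a subsequence converging to a solution of the limiting equation \eqref{eq:parameter-dep-limit}, and finally identify this limit with $u_0$ via the strict convexity of $E_f$, which then upgrades to convergence of the full sequence.

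The key step is to derive uniform bounds from the weak stability hypothesis \eqref{eq:weakly-stable}. Integrating \eqref{eq1:parameter-dep} over $M$ yields $\int_M(f+\lambda_n)\e^{2u_n}d\mu_{\bar g}=\bar K$, which together with $f\le 0$ and $\lambda_n>0$ gives the identity $\int_M |f|\e^{2u_n}d\mu_{\bar g}=\lambda_n A_n+|\bar K|$, where $A_n:=\int_M \e^{2u_n}d\mu_{\bar g}$. To upgrade this into uniform $L^\infty$ control I would argue by contradiction: if $\max_M u_n\to\infty$ along a subsequence, a Brezis--Merle type concentration-compactness analysis applied to the sequence $(\e^{2u_n})$ would produce concentration at finitely many points of $M$. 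Testing the stability inequality \eqref{eq:weakly-stable} against a smooth cutoff localised at such a concentration point, together with the consequence \eqref{eq:onofri-consequence} of the Moser--Trudinger inequality, should then yield a contradiction, so that $\|u_n\|_{L^\infty(M,\bar g)}$ stays bounded. Once this is established, standard elliptic $L^p$ and Schauder estimates applied to \eqref{eq1:parameter-dep} upgrade the bound to uniform $C^{2,\alpha}(M)$ control.

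Given this a priori bound, Arzelà--Ascoli supplies a subsequence $(u_{n_k})_k$ converging in $C^2(M)$ to some $u_\infty\in C^2(M)$; passing to the limit in \eqref{eq1:parameter-dep}, using $\lambda_n\to 0^+$, shows that $u_\infty$ solves \eqref{eq:parameter-dep-limit}. Since $f\le 0$ is nonconstant and $\bar K<0$, the functional $E_f$ defined in \eqref{Ef} is strictly convex on $H^1(M,\bar g)$, as recalled in the introduction, so $u_0$ is its unique critical point and hence the unique solution of \eqref{eq:parameter-dep-limit}; thus $u_\infty=u_0$. Since this identification is independent of the chosen subsequence, the full sequence $(u_n)_n$ converges to $u_0$ in $C^2(M)$. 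The main obstacle will be the concentration analysis in the first stage: a careful use of \eqref{eq:weakly-stable} to preclude bubble formation in the regime $\lambda_n\to 0^+$, especially near the zero set of $f$, where $f+\lambda_n$ is small and its sign is delicate.
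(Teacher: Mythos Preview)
Your overall architecture—reduce to a uniform $C^{2,\alpha}$ bound, then use compactness together with the uniqueness of the solution of \eqref{eq:parameter-dep-limit} (strict convexity of $E_f$ for $f\le 0$ nonconstant) to conclude convergence of the full sequence—matches the paper exactly. The paper even begins its proof by isolating precisely this reduction.

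The substantive divergence is in how the a priori bound is obtained, and here your proposal has a real gap. You suggest a Brezis--Merle concentration alternative followed by testing stability with a local cutoff at a blow-up point. Two problems: first, you have no a priori $L^1$ control on $\e^{2u_n}$ (the volumes $A_n$ are not assumed bounded), only the signed identity $\int_M(f+\lambda_n)\e^{2u_n}d\mu_{\bar g}=\bar K$, so the standard Brezis--Merle alternative does not apply directly, and any concentration would have to occur on $\{f=0\}$ where $(f+\lambda_n)$ has indefinite sign—exactly the region you flag as delicate. Second, and more seriously, you do not indicate what inequality a local cutoff in \eqref{eq:weakly-stable} would actually produce or why it would contradict concentration; this is the heart of the proof and it is left as a hope.

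The paper avoids concentration analysis entirely. It proceeds in three steps that you should compare with your sketch. First, a uniform lower bound $u_n\ge -C_0$ is quoted from \cite{DinLiu95}. Second, on a fixed subdomain $\Omega\subset\subset\{f<0\}$ one has $\|u_n^+\|_{H^1(\Omega)}\le C_1$ (from the appendix of \cite{BorGalStr15}), hence $\int_\Omega \e^{2u_n}\le C_2$ via Moser--Trudinger. Third—and this is the key use of stability—one fixes $h\in C_c^\infty(\Omega)$, $h\le 0$, lets $w$ solve $-\Delta_{\bar g}w+\bar K=h\e^{2w}$, sets $w_n=u_n-w$, and tests \eqref{eq:weakly-stable} with the \emph{global} function $\e^{w_n}$. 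Combining this with the equation for $w_n$ multiplied by $\e^{2w_n}$ yields $\|\nabla_{\bar g}\e^{w_n}\|_{L^2}^2\le C\int_\Omega|h|\e^{2u_n}\le C$. A short contradiction argument (normalise $\e^{w_n}$ in $L^2$, get a nonzero constant limit, contradict the local $L^2$ bound on $\Omega$) then gives $\|\e^{w_n}\|_{L^2}\le C$, hence $\e^{w_n}$ bounded in $H^1$ and so $\e^{u_n}$ bounded in every $L^p$. Elliptic regularity finishes.

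The moral is that the right test function in \eqref{eq:weakly-stable} is not a localised cutoff but essentially $\e^{u_n}$ itself (shifted by a fixed comparison solution), which converts the stability inequality into a reverse Poincar\'e-type bound on $\e^{w_n}$.
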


\begin{proof}
We only need to show that
\begin{equation}
\label{eq:sufficient-bounded}
\text{$(u_n)_{n\in\N}$ is bounded in $C^{2,\alpha}(M)$ for some $\alpha>0$.} 
\end{equation}
Indeed, assuming this for the moment, we may complete the argument as follows.
Suppose by contradiction that there exists $\varepsilon>0$ and a subsequence, also denoted by $(u_n)_{n\in\N}$, with the property that
\begin{equation}
\label{eq:sufficient-bounded-contradiction}
\|u_n-u_0\|_{C^2(M)} \ge \varepsilon \qquad \text{for all $n \in \N$.}
\end{equation}
By \eqref{eq:sufficient-bounded} and the compactness of the embedding $C^{2,\alpha}(M) \hookrightarrow C^2(M)$, we may then pass to a subsequence, still denoted by $(u_n)_{n\in\N}$, with $u_n \to u_*$ in $C^2(M)$ for some $u_* \in C^2(M)$. Passing to the limit in \eqref{eq1:parameter-dep}, we then see that $u_*$ is a solution of \eqref{eq:parameter-dep-limit}, which by uniqueness implies that $u_* = u_0$.
This contradicts \eqref{eq:sufficient-bounded-contradiction}, and thus the claim follows.  

The proof of \eqref{eq:sufficient-bounded} follows by similar arguments as in \cite[p.~1063 f.]{DinLiu95}. Since the framework is slightly different, we sketch the main steps here for the convenience of the reader.  We first note that, by the same argument as in \cite[p.~1063 f.]{DinLiu95}, there exists a constant $C_0>0$ with
\begin{equation}
\label{eq:proof-lower-bound}
u_n \ge -C_0 \qquad \text{for all $n$.}   
\end{equation}

Since $\{f < 0\}$ is a nonempty open subset of $M$ by assumption, we may fix a nonempty open subdomain $\Omega \subset \subset \{f < 0\}$. By \cite[Appendix]{BorGalStr15}, there exists a constant $C_1>0$ with 
$$
\|u_n^+\|_{H^1(\Omega,\bar g)} \le C_1 \qquad \text{for all $n$} 
$$
and therefore
\begin{equation}
\label{eq:v-n-omega-est} \int_{\Omega}\e^{2u_n}d\mu_{\bar g} \le \int_{\Omega}\e^{2u_n^+}d\mu_{\bar g} \le C_2 \qquad \text{for all $n$}
\end{equation}
for some $C_2>0$ by the Moser--Trudinger inequality.  Next, we consider a nontrivial, nonpositive function $h \in C^\infty_c(\Omega) \subset C^\infty(M)$ and the unique solution $w \in C^\infty(M)$ of the equation
$$
-\Delta_{\bar g} w + \bar K  = h\e^{2w} \quad \text{in $M$.}
$$
Moreover, we let $w_n:= u_n - w$, and we note that $w_n$ satisfies 
$$
-\Delta_{\bar g} w_n +h\e^{2w}  = (f+{\lambda_n})\e^{2u_n} \quad \text{in $M$.}
$$
Multiplying this equation by $\e^{2w_n}$ and integrating by parts, we obtain
\begin{align}
\int_{M}(f+\lambda_n)\e^{2(u_n+w_n)} d\mu_{\bar g} &=   \int_{M}\Bigl(-\Delta_{\bar g} w_n + h \e^{2w} \Bigr)\e^{2w_n} d\mu_{\bar g}=  \int_{M} \Bigl(2\e^{2w_n} |\nabla_{\bar g}w_n|_{\bar g}^2 +
                                                      h \e^{2(w+w_n)} \Bigr) d\mu_{\bar g} \nonumber\\
&= 2 \int_{M}|\nabla_{\bar g} \e^{w_n}|_{\bar g}^2 d \mu_{\bar g} +  \int_{\Omega} h \e^{2u_n} d\mu_{\bar g}.  \label{eq:proof-dl-1}
\end{align}
Moreover, applying \eqref{eq:weakly-stable} to $h= \e^{w_n}$ gives 
\begin{equation}
\label{eq:proof-dl-2}
\int_{M}(f+\lambda_n)\e^{2(u_n+w_n)} d\mu_{\bar g} \le  \frac{1}{2} \int_{M}|\nabla_{\bar g} \e^{w_n}|_{\bar g}^2 d \mu_{\bar g}.
\end{equation}
Combining   \eqref{eq:v-n-omega-est}, \eqref{eq:proof-dl-1} and \eqref{eq:proof-dl-2} yields 
\begin{equation}
  \label{eq:inverse-poincare}
  \|\nabla_{\bar g} \e^{w_n}\|_{L^2(M,\bar g)}^2 \le -\frac{2}{3} \int_{\Omega} h \e^{2u_n} d\mu_{\bar g}
\le \frac{2}{3}\|h\|_{L^\infty(M,\bar g)}C_2 \quad \text{for all $n$.} 
\end{equation}
Next we claim that also $\|\e^{w_n}\|_{L^2(M,\bar g)}$ remains uniformly bounded. 
Suppose by contradiction that
\begin{equation}
  \label{eq:contradiction-dl}
\|\e^{w_n}\|_{L^2(M,\bar g)} \to \infty \qquad \text{as $n \to \infty$.}
\end{equation}
We then set $v_n:= \frac{\e^{w_n}}{\|\e^{w_n}\|_{L^2(M,\bar g)}}$, and we note that
\begin{equation}
  \label{eq:limit-v-n-1}
\|v_n\|_{L^2(M,\bar g)} = 1 \quad \text{for all $n$}\quad  \text{and}\quad \|\nabla_{\bar g} v_n\|_{L^2(M,\bar g)}^2  \to 0  \quad \text{as $n \to \infty$}
\end{equation}
by \eqref{eq:inverse-poincare}. Consequently, we may pass to a subsequence satisfying $v_n \rightharpoonup v$ in $H^1(M,\bar g)$, where $v$ is a constant function with
\begin{equation}
  \label{eq:norm-limit}
\|v\|_{L^2(M,\bar g)}=1.  
\end{equation}
However, since
$$
\|\e^{w_n}\|_{L^2(\Omega,\bar g)} \le \|\e^{u_n}\|_{L^2(\Omega,\bar g)} \|\e^{-w}\|_{L^\infty(\Omega,\bar g)} 
\le \sqrt{C_2} \|\e^{-w}\|_{L^\infty(\Omega,\bar g)}\quad \text{for all $n \in \N$}
$$
by \eqref{eq:v-n-omega-est} and therefore 
$$
\|v\|_{L^2(\Omega,\bar g)} = \lim_{n \to \infty}\|v_n\|_{L^2(\Omega,\bar g)}=
\lim_{n \to \infty}\frac{\|\e^{w_n}\|_{L^2(\Omega,\bar g)}}{\|\e^{w_n}\|_{L^2(M,\bar g)}} = 0
$$
by \eqref{eq:contradiction-dl}, we conclude that the constant function $v$ must vanish identically, contradicting \eqref{eq:norm-limit}.

Consequently, $\|\e^{w_n}\|_{L^2(M,\bar g)}$ remains uniformly bounded, which by \eqref{eq:inverse-poincare} implies that $\e^{w_n}$ remains bounded in $H^1(M,\bar g)$ and therefore in $L^p(M,\bar g)$ for any $p < \infty$. Since $\e^{u_n} \le \|\e^{w}\|_{L^\infty(M,\bar g)} \e^{w_n}$ on $M$ for all $n \in \N$,
it thus follows that also $\e^{u_n}$ remains bounded in $L^p(M,\bar g)$ for any $p< \infty$. Moreover, by \eqref{eq:proof-lower-bound}, the same applies to the sequence $u_n$ itself. Therefore, applying successively elliptic $L^p$ and Schauder estimates to \eqref{eq1:parameter-dep}, we deduce \eqref{eq:sufficient-bounded}, as required.
\end{proof}

In the proof of the next proposition, we need the following classical interpolation inequality, see e.g.~\cite{CecMon08}.
\begin{lemma}[Gagliardo--Nirenberg--Lady\v{z}henskaya inequality]\label{eindSob}
For every $r>2$, there exists a constant $C_{\text{GNL}}=C_{\text{GNL}}(r)>0$ with 
\[\|\zeta\|^{r}_{L^r(M,\bar g)} \le C_{\text{GNL}}\|\zeta\|^{2}_{L^2(M,\bar g)}\|\zeta\|^{r-2}_{H^1(M,\bar g)} \qquad \text{for every $\zeta\in H^1(M,\bar g)$.} \]
\end{lemma}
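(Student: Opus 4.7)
The plan is to reduce the stated inequality on the closed surface $(M,\bar g)$ to the classical Gagliardo--Nirenberg inequality on $\R^2$ via a localisation argument. Since $M$ is compact, I would cover it by a finite atlas $\{(U_j,\phi_j)\}_{j=1}^N$ in which the metric $\bar g$ is uniformly equivalent to the Euclidean metric, together with a subordinate smooth partition of unity $\{\chi_j\}$. For any $\zeta\in H^1(M,\bar g)$ the localised pushforwards $v_j:=(\chi_j\zeta)\circ\phi_j^{-1}$ are compactly supported in $\R^2$, and standard bounds yield
\[
\|\zeta\|_{L^r(M,\bar g)}^r\le C\sum_j\|v_j\|_{L^r(\R^2)}^r,\quad \|v_j\|_{L^2(\R^2)}\le C\|\zeta\|_{L^2(M,\bar g)},\quad \|\nabla v_j\|_{L^2(\R^2)}\le C\|\zeta\|_{H^1(M,\bar g)}.
\]
The full $H^1$ norm (rather than only $\|\nabla_{\bar g}\zeta\|_{L^2}$) appears on the right precisely because of the contribution of $(\nabla\chi_j)\zeta$ in the product rule for $\nabla v_j$. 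It therefore suffices to prove the Euclidean inequality $\|v\|_{L^r(\R^2)}^r\le C_r\|v\|_{L^2(\R^2)}^2\|\nabla v\|_{L^2(\R^2)}^{r-2}$ for $v\in C_c^\infty(\R^2)$ and $r>2$.

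For the Euclidean estimate I would exploit the two-dimensional Sobolev inequality $\|f\|_{L^2(\R^2)}\le C\|\nabla f\|_{L^1(\R^2)}$, a consequence of $W^{1,1}(\R^2)\hookrightarrow L^2(\R^2)$. Applied to $f=|v|^k$ and followed by Cauchy--Schwarz, this yields the recursive bound
\[
\|v\|_{L^{2k}(\R^2)}^{k}\le Ck\|v\|_{L^{2(k-1)}(\R^2)}^{k-1}\|\nabla v\|_{L^2(\R^2)}.
\]
Iterating from $k=1$ (where the inequality is trivial) produces $\|v\|_{L^{2k}(\R^2)}^{2k}\le C_k\|v\|_{L^2(\R^2)}^2\|\nabla v\|_{L^2(\R^2)}^{2k-2}$ for every positive integer $k$; the case $k=2$ is the classical Ladyzhenskaya inequality in two dimensions. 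For non-integer $r>2$, I would interpolate by H\"older between integer exponents $2k_1\le r\le 2k_2$, using the log-convexity of $L^p$ norms. A direct computation with the two linear relations governing the interpolation parameters confirms that they align to produce precisely the exponents $2$ on $\|v\|_{L^2}$ and $r-2$ on $\|\nabla v\|_{L^2}$ on the right-hand side, as required.

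The main obstacle is not conceptual but rather careful bookkeeping in two places: first, verifying that the constants accumulated from the chart transitions, the Jacobians of the $\phi_j$, and the derivatives $\nabla\chi_j$ can be consolidated into a single constant $C_{\text{GNL}}(r)$ depending only on $r$ and on the fixed geometry of $(M,\bar g)$; and second, checking that the H\"older interpolation between the integer cases indeed yields exactly the claimed homogeneity on the right-hand side rather than some weaker combination involving lower-order $L^p$ norms. Both points are routine once the iteration scheme is set up.
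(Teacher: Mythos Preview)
Your argument is correct. The localisation to charts is standard, the iteration based on the two-dimensional Sobolev embedding $W^{1,1}(\R^2)\hookrightarrow L^2(\R^2)$ is a well-known route to the Ladyzhenskaya-type inequalities, and the H\"older interpolation between the even-integer exponents does indeed produce exactly the exponents $2$ and $r-2$ on the right-hand side: from the interpolation relation $\tfrac{2}{r}=\tfrac{\theta}{k_1}+\tfrac{1-\theta}{k_2}$ one reads off immediately that the $L^2$-exponent is $r\bigl(\tfrac{\theta}{k_1}+\tfrac{1-\theta}{k_2}\bigr)=2$ and the gradient exponent is $r-2$.

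There is, however, nothing to compare with: the paper does not prove this lemma. It is introduced as a classical interpolation inequality with a reference to the literature, and no argument is given in the text. Your proposal thus supplies a self-contained proof of a result that the paper simply cites.
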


\begin{proposition}
  \label{sec:stat-minim-probl-2}
Let $f \in C^\infty(M)$ be a nonconstant function with $\max_{x\in M} f(x) =0$. Then there exists $\lambda_\sharp$ and a $C^1$-curve $(-\infty,\lambda_{\sharp}] \to C^2(M); \quad \lambda \mapsto u_\lambda$ with the following properties.
  \begin{enumerate}
  \item[(i)] If $\lambda \le 0$, then $u_\lambda$ is the unique solution of
\begin{equation}
  \label{eq:parameter-dep-cor}
-\Delta_{\bar g} u + \bar K  = f_\lambda \e^{2 u} \quad \text{in $M$}
\end{equation}
and a global minimum of $E_{f_\lambda}$.
\item[(ii)] If $\lambda \in (0,\lambda_\sharp]$, then $u_\lambda$ is the unique weakly stable solution of \eqref{eq:parameter-dep-cor} in the sense of \eqref{eq:weakly-stable}, and it is a local minimum of $E_{f_\lambda}$. 
\item[(iii)] The curve of functions $\lambda \mapsto u_\lambda$ is pointwisely strictly increasing on $M$, and so the volume function 
  \begin{equation}
    \label{eq:def-volume}
(-\infty,\lambda_\sharp] \to [0,\infty);\quad  \lambda \mapsto V(\lambda):= \int_{M}\e^{2u_\lambda} d\mu_{\bar g}
  \end{equation}
  is continuous and strictly increasing. 
\end{enumerate}
\end{proposition}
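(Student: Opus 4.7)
The strategy is to treat the subcritical regime $\lambda \le 0$ by strict convexity, to extend the solution branch across $\lambda = 0$ via the implicit function theorem, and to pin down uniqueness of weakly stable solutions for small $\lambda > 0$ by combining local IFT uniqueness with the compactness statement of \autoref{local-minimizers-sequence}. For $\lambda \le 0$, the function $f_\lambda = f + \lambda$ is nonpositive and not identically zero, so $E_{f_\lambda}$ is strictly convex and coercive on $H^1(M,\bar g)$, as recalled in the introduction following \cite{DinLiu95}. It therefore admits a unique critical point $u_\lambda$, which is a global minimum and, by elliptic regularity, a smooth solution of \eqref{eq:parameter-dep-cor}; this settles case (i). Linearising the map $F(u,\lambda) := -\Delta_{\bar g} u + \bar K - (f+\lambda)\e^{2u}$ at $(u_\lambda,\lambda)$ yields the operator $L_\lambda := -\Delta_{\bar g} - 2(f+\lambda)\e^{2u_\lambda}$, whose associated quadratic form $\int_M\bigl(|\nabla h|^2_{\bar g} - 2(f+\lambda)\e^{2u_\lambda}h^2\bigr)d\mu_{\bar g}$ coincides with the second variation of $E_{f_\lambda}$ at $u_\lambda$. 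For every $\lambda \le 0$ this quadratic form is strictly positive on nonzero $h$: for $\lambda < 0$ because $-2(f+\lambda) > 0$ pointwise, and for $\lambda = 0$ because an $h$ with vanishing gradient must be constant, for which $h^2 \int_M(-2f)\e^{2u_0}d\mu_{\bar g} > 0$ unless $h\equiv 0$. Hence $L_\lambda$ is invertible on all of $(-\infty,0]$, and the implicit function theorem upgrades $\lambda \mapsto u_\lambda$ to a $C^1$ curve on $(-\infty,0]$ that extends into an open neighborhood of $0$.

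To continue the branch into positive $\lambda$, I apply the IFT at $(u_0,0)$ in H\"older spaces to obtain $\epsilon > 0$, a $C^1$ curve $\lambda \mapsto u_\lambda$ from $(-\epsilon,\epsilon)$ into $C^{2,\alpha}(M)$, and an open neighborhood $U \subset C^{2,\alpha}(M)$ of $u_0$ such that the only zeros of $F(\cdot,\lambda)$ inside $U$ with $|\lambda|<\epsilon$ are the points $u_\lambda$ of the curve. Continuity of $L_\lambda$ in $\lambda$ then lets me shrink $\epsilon$ so that $L_\lambda$ has positive principal eigenvalue for all $|\lambda|<\epsilon$; in particular, $u_\lambda$ is weakly stable and even a strict local minimum of $E_{f_\lambda}$ on this range. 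I then fix $\lambda_\sharp \in (0,\epsilon)$ small enough that every weakly stable solution of \eqref{eq:parameter-dep-cor} at $\lambda \in (0,\lambda_\sharp]$ lies in $U$. Granted this choice, the local uniqueness in $U$ afforded by the IFT yields the uniqueness asserted in part (ii), while the local minimum property follows from the positive definiteness of $L_\lambda$.

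The existence of such a $\lambda_\sharp$ is the main obstacle and relies crucially on \autoref{local-minimizers-sequence}. Were no $\lambda_\sharp > 0$ to work, one could extract sequences $\lambda_n \to 0^+$ and weakly stable solutions $v_n \notin U$ of $F(v_n,\lambda_n) = 0$; \autoref{local-minimizers-sequence} would then force $v_n \to u_0$ in $C^2(M)$, eventually placing $v_n$ inside $U$ (since $U$ is open and contains $u_0$), a contradiction.

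For part (iii), differentiating $-\Delta_{\bar g} u_\lambda + \bar K = (f+\lambda)\e^{2u_\lambda}$ in $\lambda$ yields
\begin{equation*}
L_\lambda(\partial_\lambda u_\lambda) = \e^{2u_\lambda}.
\end{equation*}
Since $L_\lambda$ has positive principal eigenvalue throughout $(-\infty,\lambda_\sharp]$ and the right-hand side is strictly positive, a standard maximum principle on the closed manifold $M$ (proved for instance by testing $L_\lambda \phi = \e^{2u_\lambda}$ against the negative part of $\phi := \partial_\lambda u_\lambda$ and using that the quadratic form of $L_\lambda$ is positive) gives $\partial_\lambda u_\lambda > 0$ pointwise on $M$. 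Consequently $\lambda \mapsto u_\lambda(x)$ is strictly increasing for every $x \in M$, so $V(\lambda) = \int_M \e^{2u_\lambda}\,d\mu_{\bar g}$ is strictly increasing, and its continuity is inherited from the $C^1$ regularity of the curve $\lambda \mapsto u_\lambda$ in $C^2(M)$.
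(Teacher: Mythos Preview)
Your proof is correct and follows the same overall architecture as the paper---convexity for $\lambda\le 0$, IFT to cross $\lambda=0$, and a contradiction via \autoref{local-minimizers-sequence} for uniqueness of weakly stable solutions---but diverges from the paper in two implementation details.

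For the uniqueness in (ii), the paper does not invoke the local uniqueness neighborhood $U$ from the implicit function theorem. Instead, given hypothetical competitors $u_n\neq u_{\lambda_n}$, it uses the strong stability constant $C_\lambda>0$ from \eqref{eq:stable} to derive the quantitative bound $\|v_n\|_{H^1}^2\le C\|v_n\|_{H^1}^3$ for $v_n=u_n-u_{\lambda_n}$, which forces $\|v_n\|_{H^1}\ge 1/C$ and contradicts $v_n\to 0$. Your route is cleaner conceptually, but note a small topology mismatch: you take $U$ open in $C^{2,\alpha}$, while \autoref{local-minimizers-sequence} only asserts $v_n\to u_0$ in $C^2$. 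This is harmless---either run the IFT in $W^{2,p}$ or $C^2$, or observe that the proof of \autoref{local-minimizers-sequence} actually yields boundedness in $C^{2,\alpha}$ and hence convergence in $C^{2,\alpha'}$ for $\alpha'<\alpha$---but it deserves one sentence.

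For (iii), the paper argues via sub- and supersolutions, citing \cite{DinLiu95}: given $u_{\lambda_0}$, one builds for every $\lambda<\lambda_0$ a weakly stable solution strictly below $u_{\lambda_0}$, and uniqueness in (ii) identifies it with $u_\lambda$. Your argument---differentiate the equation to obtain $L_\lambda(\partial_\lambda u_\lambda)=\e^{2u_\lambda}$ and use the maximum principle available once the principal eigenvalue of $L_\lambda$ is positive---is a genuinely different and arguably more direct proof of pointwise monotonicity; it also gives strict monotonicity and continuity of $V(\lambda)$ in one stroke via the $C^1$-regularity of the curve. The paper's approach has the advantage of not needing differentiability of the branch, while yours avoids the sub/supersolution machinery.
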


\begin{proof}
  We already know that, for $\lambda \le 0$, the energy $E_{f_\lambda}$ admits a strict global minimiser $u_\lambda$ which depends smoothly on $\lambda$. Moreover, by \cite[Proposition 2.4]{BorGalStr15}, the curve $\lambda \mapsto u_\lambda$ can be extended as a $C^1$-curve to an interval $(-\infty,\lambda_{\sharp}]$ for some $\lambda_{\sharp}>0$. We also know from \cite[Proposition 2.4]{BorGalStr15} that, for $\lambda \in (-\infty,\lambda_{\sharp}]$, the solution $u_\lambda$ is strongly stable in the sense that 
\begin{equation}
  \label{eq:stable}
C_\lambda := \inf_{h \in H^1(M,\bar g)} \frac{1}{\|h\|_{H^1(M,\bar g)}^2}\int_{M}\Bigl(|\nabla_{\bar g} h|_{\bar g}^2 -2 f_\lambda \e^{2u_\lambda} h^2\Bigr) d \mu_{\bar g} >0.  
\end{equation}
Here we note that the function $\lambda \mapsto C_\lambda$ is continuous since $u_\lambda$ depends continuously on $\lambda$ with respect to the $C^2$-norm. Next we prove that, after making $\lambda_\sharp>0$ smaller if necessary, the function $u_\lambda$ is the unique weakly stable solution of \eqref{eq:parameter-dep-cor} for $\lambda \in (0,\lambda_{\sharp}]$. Arguing by contradiction, we assume that there exists a sequence 
$\lambda_n \to 0^+$ and corresponding weakly stable solutions $(u_n)_{n\in\N}$  of
\begin{equation}
  \label{eq:parameter-dep}
-\Delta_{\bar g} u_n + \bar K  = (f + \lambda_n)\e^{2u_n} \quad \text{in $M$}
\end{equation}
with the property that $u_n \not = u_{\lambda_n}$ for every $n \in \N$. By \autoref{local-minimizers-sequence}, we know that $u_n \to u_0$ in $C^2(M)$. Consequently, $v_n := u_n -  u_{\lambda_n} \to 0$ in $C^2(M)$ as $n \to \infty$, whereas the functions $v_n$ solve
\begin{equation}
  \label{eq:parameter-dep-v-n}
  -\Delta_{\bar g} v_n   = (f + \lambda_n)\bigl(\e^{2u_n}-\e^{2u_{\lambda_n}}\bigr)=
  (f + \lambda_n)\e^{2u_{\lambda_n}} \bigl(\e^{2v_n}-1\bigr)
  \quad \text{in $M$} \quad \text{for every $n \in \N$.}
\end{equation}
Combining this fact with \eqref{eq:stable}, we deduce that 
\begin{align*}
  \|v_n\|_{H^1(M,\bar g)}^2 &\le \frac{1}{C_\lambda} \int_{M}\Bigl(|\nabla_{\bar g} v_n|_{\bar g}^2 -2 (f+\lambda_n) \e^{2u_{\lambda_n}} v_n^2\Bigr)d\mu_{\bar g}\\
&= \frac{1}{C_\lambda} \int_{M}(f+\lambda_n) \e^{2u_{\lambda_n}} \bigl(\e^{2v_n}-1- 2v_n\Bigr)v_nd\mu_{\bar g}.  
\end{align*}
Since $v_n \to 0$ in $C^2(M)$, there exists a constant $C>0$ with $|(\e^{2v_n}-1- 2v_n)v_n| \le C |v_n|^3$ on $M$ for all $n \in \N$, which then implies with H\"older's inequality and \autoref{eindSob} that
\begin{align*}
\|v_n\|_{H^1(M,\bar g)}^2&\le C \|(f+\lambda_n) \e^{2u_{\lambda_n}}\|_{L^\infty(M,\bar g)}\|v_n\|_{L^3(M,\bar g)}^3\\
 &\le C\left(\int_M|v_n|^{3\cdot\frac43}d\mu_{\bar g}\right)^{\frac34}= C \|v_n\|_{L^4(M,\bar g)}^3\le C \|v_n\|_{H^1(M,\bar g)}^3
\end{align*}
with a constant $C>0$ independent on $M$. This contradicts the fact that $v_n \to 0$ in $H^1(M)$ as $n \to \infty$. The claim thus follows.

It remains to prove that the curve of functions $\lambda \mapsto u_\lambda$ is pointwisely strictly increasing on $M$. This is a consequence of the uniqueness of weakly stable solutions stated in (ii) and the fact that, as noted in \cite{DinLiu95}, if $u_{\lambda_0}$ is a solution for some $\lambda_0 \in (-\infty,\lambda_{\sharp}]$, it is possible to construct, via the method of sub- and supersolutions, for every $\lambda < \lambda_0$, a {\em weakly stable} solution $u_\lambda$ with $u_\lambda < u_{\lambda_0}$ everywhere in $M$.   
\end{proof}

\begin{corollary}
  \label{sec:stat-minim-probl-1-cor-1}
Let $f \in C^\infty(M)$ be nonconstant with $\max_{x\in M} f (x)= 0$, and let $\lambda_\sharp>0$ be given as in \autoref{sec:stat-minim-probl-2}. Then there exists $\kappa_1>0$ with the following property.

If $A \ge \kappa_1$ and $u \in \mathcal{C}_{A}$ is a solution of
  \begin{equation}
    \label{eq:PCP-lambda-corollary}
  -\Delta_{\bar g}u + \bar K = (f+\lambda) \e^{2u}
  \end{equation}
  for some $\lambda \in \R$ with $E_f(u)< \frac{\lambda_\sharp A}{2}$, then $0< \lambda < \lambda_\sharp$, and $u$ is not a weakly stable solution of \eqref{eq:PCP-lambda-corollary}, so $u \not = u_\lambda$.
\end{corollary}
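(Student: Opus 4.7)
The idea is to combine \autoref{lemma-lagrange-multiplier} (to rule out $\lambda \ge \lambda_\sharp$) with the volume monotonicity from \autoref{sec:stat-minim-probl-2} (to rule out both $\lambda \le 0$ and weakly stable $u$). Concretely, let $\kappa_0 = \kappa_0(\lambda_\sharp)>0$ be the constant given by \autoref{lemma-lagrange-multiplier} with $\varepsilon = \lambda_\sharp$, and let $V \colon (-\infty,\lambda_\sharp] \to (0,\infty)$, $\lambda \mapsto V(\lambda) = \int_M \e^{2u_\lambda}d\mu_{\bar g}$, be the volume function from \autoref{sec:stat-minim-probl-2}\,(iii), which is continuous and strictly increasing. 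Since $V(\lambda_\sharp)$ is then a finite real number, one can choose
\[
\kappa_1 := \max\{\kappa_0,\, V(\lambda_\sharp) + 1\}.
\]

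Now assume that $A \ge \kappa_1$ and that $u \in \mathcal{C}_A$ is a solution of \eqref{eq:PCP-lambda-corollary} with $E_f(u) < \tfrac{\lambda_\sharp A}{2}$. The upper bound $\lambda < \lambda_\sharp$ follows directly from \autoref{lemma-lagrange-multiplier} applied with $\varepsilon = \lambda_\sharp$, since $A \ge \kappa_1 \ge \kappa_0$.

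For the lower bound $\lambda > 0$, I would argue by contradiction: if $\lambda \le 0$, then by \autoref{sec:stat-minim-probl-2}\,(i) the function $u_\lambda$ is the unique solution of \eqref{eq:parameter-dep-cor}, hence $u = u_\lambda$. But then the volume constraint forces $A = \int_M \e^{2u} d\mu_{\bar g} = V(\lambda) \le V(\lambda_\sharp) < \kappa_1 \le A$ by strict monotonicity of $V$, a contradiction. The same contradiction, now using the uniqueness of weakly stable solutions from \autoref{sec:stat-minim-probl-2}\,(ii), rules out that $u$ is a weakly stable solution of \eqref{eq:PCP-lambda-corollary}: if it were, then $u = u_\lambda$ with $\lambda \in (0,\lambda_\sharp)$, and again $A = V(\lambda) \le V(\lambda_\sharp) < \kappa_1 \le A$.

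The main (and only real) obstacle is making sure that $V(\lambda_\sharp)$ is a well-defined finite quantity that can serve as an upper barrier; this is precisely what \autoref{sec:stat-minim-probl-2}\,(iii) provides, so no further work is needed. All other steps are short logical deductions from the already established statements.
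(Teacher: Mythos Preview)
Your proposal is correct and follows essentially the same approach as the paper: set $\kappa_1=\max\{\kappa_0,V(\lambda_\sharp)\}$ (the paper does not add the $+1$, but this is immaterial since strict monotonicity of $V$ gives $V(\lambda)<V(\lambda_\sharp)$ for $\lambda<\lambda_\sharp$), use \autoref{lemma-lagrange-multiplier} for the upper bound $\lambda<\lambda_\sharp$, and use the uniqueness statements in \autoref{sec:stat-minim-probl-2}\,(i)--(ii) together with the volume monotonicity in (iii) to exclude both $\lambda\le 0$ and weak stability. If anything, your write-up is more explicit than the paper's about why $\lambda>0$ holds, since the paper compresses this into the single sentence ``From \autoref{lemma-lagrange-multiplier}, we then deduce that $0<\lambda<\lambda_\sharp$'' even though the lower bound really comes from the volume argument.
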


\begin{proof}
  Let $\kappa_0>0$ be given as in \autoref{lemma-lagrange-multiplier} for $\varepsilon = \lambda_\sharp>0$. Moreover, let
  $$
  \kappa_1:= \max \left \{ \kappa_0, V(u_{\lambda_\sharp})\right\}
  $$
  with $V$ defined in \eqref{eq:def-volume}. Next, let $u \in \mathcal{C}_{A}$ be a solution of \eqref{eq:PCP-lambda-corollary} for some $\lambda \in \R$ with $E_f(u)< \frac{\lambda_\sharp A}{2}$. From \autoref{lemma-lagrange-multiplier}, we then deduce that $0< \lambda < \lambda_\sharp$, and by \autoref{sec:stat-minim-probl-2} (iii) we have $u \not = u_\lambda$. Since $u_\lambda$ is the unique weakly stable solution of \eqref{eq:PCP-lambda-corollary}, it follows that $u$ is not weakly stable.
\end{proof}

\begin{corollary}
  \label{sec:stat-minim-probl-1-cor-2}
   Let $p>2$, $f \in C^\infty(M)$ be nonconstant with $\max_{x\in M} f (x)= 0$, and let $\lambda_\sharp>0$ be given as in \autoref{sec:stat-minim-probl-2}. Then there exists $\kappa>0$ with the property that for every $A \ge \kappa$ the set
   \[\tilde{\mathcal{C}}:=\left\{u_0\in\mathcal{C}_{p,A} \mid E_f(u_0)<\frac{\lambda_\sharp A}{2}\right\}\]   
   is nonempty, and for every $u_0\in\tilde{\mathcal{C}}$ the global solution $u \in C([0,\infty)\times M)\cap C([0,\infty);H^1(M,\bar g))\cap C^\infty((0,\infty)\times M)$ of the initial value problem \eqref{PCFVolNew1}, \eqref{PCFVolNew2} converges, as $t \to \infty$ suitably, to a solution $u_\infty$ of the static problem \eqref{eq:PCP-lambda-corollary} for some $\lambda \in (0,\lambda_\sharp)$ which is not weakly stable and hence no local minimiser of $E_{f_\lambda}$. 
\end{corollary}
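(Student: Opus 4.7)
The plan is to combine the asymptotic estimate \eqref{eq:m-A-second-estimate} from \autoref{m-A-est} (to produce admissible initial data), \autoref{ShortTimeExistence} (to run the flow and extract a limit), and \autoref{sec:stat-minim-probl-1-cor-1} (to characterise that limit), with the energy monotonicity in \autoref{Properties1} acting as the glue. The key observation is that the threshold $\frac{\lambda_\sharp A}{2}$ appearing in the definition of $\tilde{\mathcal{C}}$ is preserved by the decreasing energy all the way to $u_\infty$, which thereby falls within the hypothesis of \autoref{sec:stat-minim-probl-1-cor-1}.

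To begin, I would fix the threshold $\kappa$. By \eqref{eq:m-A-second-estimate} there exists $A_0>0$ with $m_{f,A} < \frac{\lambda_\sharp A}{2}$ for every $A \ge A_0$, and I would set $\kappa := \max\{\kappa_1, A_0\}$, where $\kappa_1$ is the constant supplied by \autoref{sec:stat-minim-probl-1-cor-1}. Non-emptiness of $\tilde{\mathcal{C}}$ for $A \ge \kappa$ is then verified by the explicit competitor $u_0 := \tau_A \psi$ built in the proof of \autoref{m-A-est}: this function is smooth and belongs to $\mathcal{C}_A$, hence to $\mathcal{C}_{p,A}$ for any $p>2$, and the bound derived there, together with the growth $\tau_A = O(\log A)$, shows that $E_f(u_0) < \frac{\lambda_\sharp A}{2}$ once the auxiliary parameter $\varepsilon$ in that construction is chosen small enough relative to $\lambda_\sharp$ and $A$ is sufficiently large.

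Now take any $u_0 \in \tilde{\mathcal{C}}$ and the associated global solution $u$ produced by \autoref{ShortTimeExistence}. The same theorem provides a sequence $t_l \to \infty$ with $u(t_l) \to u_\infty$ in $H^2(M,\bar g)$, where $u_\infty \in \mathcal{C}_A$ solves \eqref{PCP-f-lambda} with $\lambda$ given by \eqref{eq:def-lambda}. Because $H^2(M,\bar g)$ embeds compactly into $C^0(M)$ on the two-dimensional surface, the convergence is uniform, which together with the $L^2$-convergence of the gradients ensures that $E_f$ is continuous along the subsequence; the monotonicity identity \eqref{APrioriBound} then yields
\begin{equation*}
E_f(u_\infty) = \lim_{l \to \infty} E_f(u(t_l)) \le E_f(u_0) < \frac{\lambda_\sharp A}{2}.
\end{equation*}
Since also $A \ge \kappa_1$, \autoref{sec:stat-minim-probl-1-cor-1} applies to $u_\infty$ and delivers both $\lambda \in (0,\lambda_\sharp)$ and the failure of weak stability of $u_\infty$ in the sense of \eqref{eq:weakly-stable}. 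The quadratic form in \eqref{eq:weakly-stable} is precisely the second variation of the unconstrained functional $E_{f_\lambda}$ at its critical point $u_\infty$, so its negativity in some direction rules out $u_\infty$ being a local minimiser of $E_{f_\lambda}$. I do not foresee a substantive obstacle here: the only delicate point is the persistence of the strict energy inequality in passing to the limit, which is handled by the monotonicity of $t \mapsto E_f(u(t))$ combined with the continuity of $E_f$ under $H^2$-convergence.
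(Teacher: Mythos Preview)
Your proposal is correct and follows essentially the same route as the paper: choose $\kappa$ via \autoref{m-A-est} and \autoref{sec:stat-minim-probl-1-cor-1}, run the flow via \autoref{ShortTimeExistence}, pass the strict energy bound to $u_\infty$ by monotonicity and $H^2$-continuity of $E_f$, and conclude with \autoref{sec:stat-minim-probl-1-cor-1}. The only minor variation is in producing a $W^{2,p}$ element of $\tilde{\mathcal{C}}$: the paper takes $m_{f,A}<\frac{\lambda_\sharp A}{4}$ to leave room for approximating an $H^1$-near-minimiser by a smooth function in $\mathcal{C}_A$, while you invoke directly the smooth competitor $\tau_A\psi$ from the proof of \autoref{m-A-est}, which is cleaner and avoids the approximation step.
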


\begin{proof}
  Let $\kappa_1>0$ be given by \autoref{sec:stat-minim-probl-1-cor-1}. By \eqref{eq:m-A-second-estimate}, there exists $\kappa\ge\kappa_1>0$ with $m_{f,A}<\frac{\lambda_\sharp A}{4}$ for fixed $A>\kappa$. Consequently, there exists $u_0 \in \mathcal{C}_{A}\cap W^{2,p}(M,\bar g)$ with $E_f(u_0)< \frac{\lambda_\sharp A}{2}$. By \autoref{ShortTimeExistence}, the global solution $u \in C([0,\infty)\times M)\cap C([0,\infty);H^1(M,\bar g))\cap C^\infty((0,\infty)\times M)$ of the initial value problem \eqref{PCFVolNew1}, \eqref{PCFVolNew2} converges, as $t \to \infty$ suitably, to a solution $u_\infty \in \mathcal{C}_{A}$ of the static problem \eqref{eq:PCP-lambda-corollary} for some $\lambda \in \R$, whereas $E_f(u_\infty) \le E_f(u_0) < \frac{\lambda_\sharp A}{2}$. Consequently, 
$\lambda \in (0,\lambda_\sharp)$ by \autoref{sec:stat-minim-probl-1-cor-1}, and $u_\infty$ is not weakly stable.
\end{proof}

\section{Proof of the Main Results} 
\label{sec:proof-main-results}
\subsection{Preliminaries}
In the following, we consider, for fixed $T>0$, the spaces
\[L^p_tL^r_x:=L^p([0,T];L^r(M,\bar g))\quad\text{and}\quad L^p_tH^q_x:=L^p([0,T];H^q(M,\bar g)).\]
We stress that, although these spaces depend on $T$, we prefer to use a $T$-independent notation. We also note that, since $T<\infty$ and $\vol_{\bar g}=1$, we have $L^q_tL^r_x\subset L^s_tL^p_x$ for $p,q,r,s\in[1,\infty]$ with  $q\ge s$, $r\ge p$.

\begin{lemma}[Sobolev inequality]\label{SobolevLemma}
There exists a constant $C_S>0$ such that for every $T \le 1$ and every $\rho\in L^\infty_t H^1_x$ we have
\begin{equation}\label{Sobolev}
\|\rho\|^2_{L^4_tL^4_x}\le C_S(\|\rho\|^2_{L^\infty_tL^2_x}+\|\nabla_{\bar g}\rho\|^2_{L^2_tL^2_x})<\infty.
\end{equation}
\end{lemma}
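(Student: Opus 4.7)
The plan is to reduce the space-time estimate to a pointwise-in-time application of the Gagliardo--Nirenberg--Lady\v{z}henskaya inequality (\autoref{eindSob}) with exponent $r=4$, and then to integrate in time, exploiting the hypothesis $T\le 1$ to absorb the $L^2$-part of the $H^1$-norm into the $L^\infty_t L^2_x$ bound.

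Concretely, applying \autoref{eindSob} with $r=4$ to $\rho(t,\cdot)$ for almost every $t\in[0,T]$ yields
\[\|\rho(t)\|_{L^4(M,\bar g)}^4 \le C_{\text{GNL}}\,\|\rho(t)\|_{L^2(M,\bar g)}^2\,\|\rho(t)\|_{H^1(M,\bar g)}^2.\]
Integrating in $t$ and pulling the $L^\infty_t L^2_x$-norm out of the time integral, one obtains
\[\|\rho\|_{L^4_tL^4_x}^4 \le C_{\text{GNL}}\,\|\rho\|_{L^\infty_tL^2_x}^2\int_0^T\|\rho(t)\|_{H^1(M,\bar g)}^2\,dt.\]

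Next, splitting the $H^1$-norm as $\|\rho(t)\|_{L^2(M,\bar g)}^2+\|\nabla_{\bar g}\rho(t)\|_{L^2(M,\bar g)}^2$ and using $T\le 1$ gives
\[\int_0^T\|\rho(t)\|_{H^1(M,\bar g)}^2\,dt \le T\,\|\rho\|_{L^\infty_tL^2_x}^2+\|\nabla_{\bar g}\rho\|_{L^2_tL^2_x}^2 \le \|\rho\|_{L^\infty_tL^2_x}^2+\|\nabla_{\bar g}\rho\|_{L^2_tL^2_x}^2.\]
Combining these two bounds and using that the first factor is dominated by the sum,
\[\|\rho\|_{L^4_tL^4_x}^4 \le C_{\text{GNL}}\,\|\rho\|_{L^\infty_tL^2_x}^2\bigl(\|\rho\|_{L^\infty_tL^2_x}^2+\|\nabla_{\bar g}\rho\|_{L^2_tL^2_x}^2\bigr) \le C_{\text{GNL}}\bigl(\|\rho\|_{L^\infty_tL^2_x}^2+\|\nabla_{\bar g}\rho\|_{L^2_tL^2_x}^2\bigr)^2,\]
so taking square roots yields \eqref{Sobolev} with $C_S=\sqrt{C_{\text{GNL}}}$; finiteness of the right-hand side is immediate from $\rho\in L^\infty_tH^1_x$.

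No serious obstacle arises: the only point requiring a brief justification is the pointwise-in-time application of \autoref{eindSob}, which is legitimate since $\rho\in L^\infty_tH^1_x$ forces $\rho(t,\cdot)\in H^1(M,\bar g)$ for a.e.\ $t\in[0,T]$. The assumption $T\le 1$ plays a purely cosmetic role, used only to swallow the factor $T$ in front of $\|\rho\|_{L^\infty_tL^2_x}^2$; without it the same argument would produce a constant depending linearly on $\max\{1,T\}$.
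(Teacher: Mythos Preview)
Your proof is correct and follows essentially the same approach as the paper: apply \autoref{eindSob} with $r=4$ pointwise in time, integrate, pull out the $L^\infty_tL^2_x$ factor, split the $H^1$-norm and use $T\le 1$. The only (cosmetic) difference is in the last step, where the paper uses Young's inequality to reach $\frac{3}{2}C_{\text{GNL}}(\,\cdot\,)^2$ while you bound $a(a+b)\le(a+b)^2$ directly, giving the slightly sharper constant $C_S=\sqrt{C_{\text{GNL}}}$.
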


\begin{proof}
By \autoref{eindSob}, applied with $r=4$, there exists a constant $C_{\text{GNL}}=C_{\text{GNL}}(4)>0$ with the property that, for all $T\le1$, 
\begin{align*}
\|\rho\|^4_{L^4_tL^4_x}&=\int_0^T\|\rho(t)\|^4_{L^4(M,\bar g)}dt \le C_{\text{GNL}}\int_0^T\|\rho(t)\|^2_{L^2(M,\bar g)}\|\rho(t)\|^2_{H^1(M,\bar g)}dt\\
&\le C_{\text{GNL}}\|\rho\|^2_{L^\infty_tL^2_x}\int_0^T(\|\rho(t)\|_{L^2(M,\bar g)}^2+\|\nabla_{\bar g}\rho(t)\|^2_{L^2(M,\bar g)})dt\\
&\le C_{\text{GNL}}\cdot T\:\|\rho\|^4_{L^\infty_tL^2_x}+C_{\text{GNL}}\|\rho\|^2_{L^\infty_tL^2_x}\|\nabla_{\bar g}\rho\|^2_{L^2_tL^2_x}\\
&\le C_{\text{GNL}}\left(\|\rho\|^4_{L^\infty_tL^2_x}+\|\rho\|^2_{L^\infty_tL^2_x}\|\nabla_{\bar g}\rho\|^2_{L^2_tL^2_x}\right) \le C_{\text{GNL}}\left(\frac{3}{2}\|\rho\|^4_{L^\infty_tL^2_x}+\frac{1}{2}\|\nabla_{\bar g}\rho\|^4_{L^2_tL^2_x}\right)\\
&\le \frac{3 C_{\text{GNL}}}{2}\left(\|\rho\|^2_{L^\infty_tL^2_x}+\|\nabla_{\bar g}\rho\|^2_{L^2_tL^2_x}\right)^2.
\end{align*}
Hence the first inequality in \eqref{Sobolev} holds with $C_S = \Bigl(\frac{3 C_{\text{GNL}}}{2}\Bigr)^{\frac{1}{2}}$. Moreover, since $T$ is finite, $\rho\in L^\infty_tH^1_x$ implies that $\rho\in L^p_tH^1_x$ for all $p\in[1,\infty]$ which shows that the RHS in \eqref{Sobolev} is finite.
\end{proof}

Now we can turn to the proofs of the main results.

\subsection{Short-Time Existence}\label{SectionShortTimeExistence}
Let $A>0$ and $p>2$ be fixed. We are looking for a short-time solution of \eqref{PCFVolNew1}, \eqref{PCFVolNew2} with initial value $u_0 \in \mathcal{C}_{p,A}$, where $\mathcal{C}_{p,A}$ is defined in  \eqref{eq:def-C-p-A}.
Using the Gauss equation \eqref{GaussEquation} we can rewrite \eqref{PCFVolNew1}, \eqref{PCFVolNew2} in the following way:
\begin{align}
\partial_tu(t)&=f-K_{g(t)}-\alpha(t)\nonumber\\
              &=\e^{-2u(t)}\Delta_{\bar g}u(t)- \e^{-2u(t)}\bar K + f -\alpha(t)\label{HG1}\\
              &=\e^{-2u(t)}\Delta_{\bar g}u(t) + \bar K \left(\frac1A-\e^{-2u(t)}\right)+f-\frac1A\int_Mf\e^{2u(t)} d\mu_{\bar g};\nonumber\\ 
u(0)&=u_0\in \mathcal{C}_{p,A},  \label{HG2}
\end{align}
where
\[\alpha(t)=\frac{1}{A}\left(\int_M fd\mu_{g(t)}-\bar K\right).\]
To find a solution of \eqref{HG1}, \eqref{HG2} on a short time interval, we consider the linear equation
\begin{align}
\partial_tu(t)&=\e^{-2v(t)}\Delta_{\bar g}u(t)+\bar K\left(\frac1A-\e^{-2v(t)}\right)+f-\frac1A\int_Mf \e^{2v(t)}d\mu_{\bar g}; \label{LineareHG1}\\
u(0)&=u_0\in\mathcal{C}_{p,A},\label{LineareHG2}
\end{align}
and use a fixed point argument in the Banach space
\begin{equation}
(X,\|\cdot\|_X):=(C([0,T]\times M),\|\cdot\|_{L^\infty([0,T]\times M)}).
\end{equation}
For this we first observe that equation \eqref{LineareHG1} is strongly parabolic for $v \in X$.
Furthermore, since $p>2$ and $M$ is compact, we have $u_0\in \mathcal{C}_{p,A} \subset H^2(M,\bar g)$, and therefore $u_0\in C(M)$. 

For the fixed point argument we fix $u_0\in \mathcal{C}_{p,A}$ and set
$$
R=R(u_0):=\|u_0\|_{L^\infty(M,\bar g)}+1.
$$
For fixed $T>0$ and $v\in X$, we then get, by \autoref{sec:appendix} in the appendix, a unique solution $u_v\in W^{2,1}_p((0,T)\times M)$ of \eqref{LineareHG1} which satisfies \eqref{LineareHG2} in the initial trace sense.
Here $W^{2,1}_p((0,T)\times M)$ denotes the space of functions $u \in L^p((0,T)\times M)$ which have weak derivatives $Du, D^2u$ and $\partial_t u$ in $L^p((0,T)\times M)$, so this space is compactly embedded in $C(X)$ by \autoref{W12p-embedding} in the appendix. 
On $X_R=\{ U\in X\mid \| U\|_X\le R\}$, we now define the function $\Phi$ as follows: for $v\in X_R$, let $\Phi(v)=:u_v$ be the unique solution of \eqref{LineareHG1}, \eqref{LineareHG2}.
First, we show that $\Phi:X_R\to X_R$ if $T>0$ is chosen small enough.

\begin{lemma}
\label{lemma-T-condition}  
If $T>0$ is fixed with 
  \begin{equation}
    \label{eq:T-condition}
    T \le \left( |\bar K|\e^{2(\|u_0\|_{L^\infty(M,\bar g)}+1)}+\|f\|_{L^\infty(M,\bar g)}\left(1+ \frac{\e^{2(\|u_0\|_{L^\infty(M,\bar g)}+1)}}{A}\right)    \right)^{-1}   
  \end{equation}
and $v\in X_R$, then $\Phi(v)\in X_R$.
\end{lemma}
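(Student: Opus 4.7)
My plan is to apply a parabolic comparison principle to the linear equation satisfied by $u_v$. Rewriting \eqref{LineareHG1} in the form $\partial_t u_v - a(t,x)\Delta_{\bar g}u_v = F(t,x)$ with $a(t,x):=\e^{-2v(t,x)}$ and
\[
F(t,x) := \bar K\Bigl(\tfrac{1}{A}-\e^{-2v(t,x)}\Bigr) + f(x) - \tfrac{1}{A}\int_M f\,\e^{2v(t)}\,d\mu_{\bar g},
\]
the hypothesis $v\in X_R$ gives $\e^{-2v(t,x)}\in[\e^{-2R},\e^{2R}]$, so the equation is uniformly parabolic on $[0,T]\times M$ with bounded continuous coefficients and bounded forcing.

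The first step is to bound $\|F\|_{L^\infty}$ explicitly. Since $\vol_{\bar g}=1$ and $\|u_0\|_{L^\infty(M,\bar g)}=R-1$, the identity $A=\int_M\e^{2u_0}d\mu_{\bar g}$ yields $\e^{-2(R-1)}\le A\le \e^{2(R-1)}$, hence $\tfrac{1}{A}\in[\e^{-2R},\e^{2R}]$ as well. Combining this with the analogous bound on $\e^{-2v(t,x)}$ gives $|\bar K(\tfrac{1}{A}-\e^{-2v(t,x)})|\le |\bar K|\e^{2R}$, and the remaining terms in $F$ are estimated by $\|f\|_{L^\infty(M,\bar g)}(1+\e^{2R}/A)$. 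Consequently
\[
\|F\|_{L^\infty([0,T]\times M)}\le |\bar K|\,\e^{2R} + \|f\|_{L^\infty(M,\bar g)}\Bigl(1 + \tfrac{\e^{2R}}{A}\Bigr) =: C,
\]
and $1/C$ is exactly the upper bound for $T$ in \eqref{eq:T-condition}.

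The comparison step then tests $u_v$ against the affine barriers $\bar u^\pm(t,x):=\pm\|u_0\|_{L^\infty(M,\bar g)}\pm Ct$, which satisfy $\partial_t\bar u^\pm - a\Delta_{\bar g}\bar u^\pm = \pm C$ and are therefore a supersolution and a subsolution of the linear problem, respectively. Since $\bar u^-(0,x)\le u_0(x)\le \bar u^+(0,x)$ on $M$, the parabolic maximum principle from the appendix (valid for strong $W^{2,1}_p$ solutions with bounded continuous coefficients, the regularity $p>2$ ensuring continuity of $u_v$ up to $t=0$) yields $|u_v(t,x)|\le \|u_0\|_{L^\infty(M,\bar g)}+Ct$ on $[0,T]\times M$. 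For $T$ satisfying \eqref{eq:T-condition}, one has $CT\le 1$, so $\|\Phi(v)\|_X=\|u_v\|_X\le \|u_0\|_{L^\infty(M,\bar g)}+1=R$, as required. The only nontrivial point is the applicability of the maximum principle at this level of regularity, which is precisely the role of the appendix result; the rest of the argument amounts to the sharp accounting of constants above.
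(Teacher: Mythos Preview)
Your proof is correct and follows essentially the same route as the paper: bound the forcing term in $L^\infty$ and apply the maximum principle from the appendix (\autoref{max-principle} (ii)) to the linear equation, obtaining $|u_v(t,x)|\le \|u_0\|_{L^\infty(M,\bar g)}+Ct$ and hence $\|\Phi(v)\|_X\le R$ under \eqref{eq:T-condition}. Your presentation is slightly more explicit in two places---you phrase the maximum principle as comparison with the affine barriers $\bar u^\pm$, and you record the observation $\e^{-2(R-1)}\le A\le \e^{2(R-1)}$ (from $u_0\in\mathcal{C}_{p,A}$) to make the estimate on $\bigl|\bar K(\tfrac{1}{A}-\e^{-2v})\bigr|$ symmetric---but the argument and the constants match the paper's.
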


\begin{proof}
With \autoref{max-principle} (ii) we directly get
\begin{equation}\label{BoundUv}
 \|\Phi(v)\|_{X} = \|u_v\|_{X} \le \|u_0^+\|_{L^\infty(M,\bar g)}+T d_T
\end{equation}
where
$$
d_T \le |\bar K|\e^{2\|v\|_{X}}+\|f\|_{L^\infty(M,\bar g)}+\frac{\|f\|_{L^\infty(M,\bar g)}\e^{2\|v\|_{X}}}{A} \le |\bar K|\e^{2R}+\|f\|_{L^\infty(M,\bar g)}\Bigl(1+ \frac{\e^{2R}}{A}\Bigr),
$$
hence
\begin{align*}
\|\Phi(v)\|_{X} & \le T\left(|\bar K|\e^{2R}+\|f\|_{L^\infty(M,\bar g)}\left(1+ \frac{\e^{2R}}{A}\right)\right)+\|u^+_0\|_{L^\infty(M,\bar g)}\\
&\le 1+ \|u_0\|_{L^\infty(M,\bar g)} = R,
\end{align*}
by \eqref{eq:T-condition} and since $R=\|u_0\|_{L^\infty(M,\bar g)}+1$, which shows the claim. 
\end{proof}

We now use Schauder's fixed point Theorem \cite{Schauder1930} to show the following proposition.
\begin{proposition}\label{existence}
  If $u_0 \in \mathcal{C}_{p,A}\subset W^{2,p}(M,\bar g)$ and $T>0$ is fixed with \eqref{eq:T-condition}, then there exists a short-time solution $u \in X \cap C^{\infty}((0,T)\times M)$ of \eqref{HG1}, \eqref{HG2}.\\
  Moreover, any such solution satisfies $u \in C([0,T), H^1(M,\bar g))$.   
\end{proposition}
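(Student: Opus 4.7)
The plan is to invoke Schauder's fixed-point theorem for the map $\Phi: X_R\to X_R$ introduced above; \autoref{lemma-T-condition} has already verified the invariance $\Phi(X_R)\subset X_R$, so what remains is to establish (i) precompactness of $\Phi(X_R)$ in $X$ and (ii) continuity of $\Phi$. For (i), for any $v\in X_R$ the equation \eqref{LineareHG1} is uniformly strongly parabolic with $\e^{-2v}\in[\e^{-2R},\e^{2R}]$ and a zeroth-order term bounded in $L^\infty$ by a constant depending only on $R$, $A$, $\|f\|_{L^\infty(M,\bar g)}$ and $|\bar K|$. Combined with the uniform $L^\infty$-bound $\|u_v\|_X\le R$ obtained in the proof of \autoref{lemma-T-condition}, the parabolic De~Giorgi--Nash--Moser theorem yields a H\"older exponent $\alpha\in(0,1)$ and a constant $C_\star$, both independent of $v$, such that
\[
  \|u_v\|_{C^{\alpha,\alpha/2}([0,T]\times M)}\le C_\star\qquad\text{for all }v\in X_R.
\]
Arzel\`a--Ascoli then gives that $\Phi(X_R)$ is relatively compact in $X$.

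For (ii), if $v_n\to v$ in $X$, then $\e^{-2v_n}\to\e^{-2v}$ uniformly and $\frac1A\int_Mf\e^{2v_n(t)}d\mu_{\bar g}\to\frac1A\int_Mf\e^{2v(t)}d\mu_{\bar g}$ uniformly in $t$. By (i), any subsequence of $(\Phi(v_n))$ admits a further subsequence converging in $X$ to some $\tilde u\in X_R$; passing to the limit in the weak formulation of \eqref{LineareHG1}, \eqref{LineareHG2}, the limit $\tilde u$ solves the linear problem associated with $v$ and initial datum $u_0$. The uniqueness statement of \autoref{sec:appendix} then forces $\tilde u=u_v=\Phi(v)$, so $\Phi(v_n)\to\Phi(v)$ in $X$. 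Schauder's theorem delivers a fixed point $u\in X_R$ of $\Phi$, which is precisely a solution of \eqref{HG1}, \eqref{HG2} in the strong sense of \autoref{sec:appendix}. Once $u\in X$ is known, the coefficient $\e^{-2u}$ and the right-hand side of \eqref{HG1} are H\"older continuous on $[t_0,t_1]\times M$ for every $[t_0,t_1]\subset(0,T)$; parabolic Schauder theory and a standard bootstrap, using the smoothness of $f$ and the real analyticity of the exponential, upgrade $u$ to $C^\infty((0,T)\times M)$.

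It remains to show that any solution $u\in X\cap C^\infty((0,T)\times M)$ of \eqref{HG1}, \eqref{HG2} lies in $C([0,T);H^1(M,\bar g))$. Since $u\in X$, both the coefficient $\e^{-2u}$ and the full right-hand side of \eqref{HG1} (including the spatial average $\frac1A\int_Mf\e^{2u(t)}d\mu_{\bar g}$, which depends only on $t$ and is continuous there) are continuous on $[0,T]\times M$. Regarding \eqref{HG1} as a linear parabolic equation with these data and initial value $u_0\in W^{2,p}(M,\bar g)$, the parabolic $L^p$-theory of \autoref{sec:appendix} yields $u\in W^{2,1}_p((0,T)\times M)$ for the given $p>2$. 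The parabolic trace theorem then gives $u\in C([0,T];W^{2-2/p,p}(M,\bar g))$, and since $\dim M=2$ and $2-4/p\ge 0$ for $p\ge 2$, the Sobolev embedding $W^{2-2/p,p}(M,\bar g)\hookrightarrow H^1(M,\bar g)$ yields the desired continuity, including at $t=0$.

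The main obstacle is the absence of \emph{uniform} $W^{2,1}_p$-bounds on $\{u_v\}_{v\in X_R}$: since $v\in X_R$ is merely continuous, the modulus of continuity of the leading coefficient $\e^{-2v}$ is not controlled uniformly in $v$, so one cannot directly use Calder\'on--Zygmund-type estimates to produce compactness of $\Phi(X_R)$, and one is forced to route through the weaker but uniform De~Giorgi--Nash--Moser H\"older bound. The $W^{2,1}_p$-regularity is then recovered only \emph{a posteriori} for the fixed point itself, which suffices for the final $H^1$-continuity statement.
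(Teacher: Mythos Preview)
Your argument is largely sound and even improves on the paper in one place, but it diverges from the paper's route in all three steps and contains one genuine gap.

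\textbf{Precompactness of $\Phi(X_R)$.} The paper does not go through Krylov--Safonov (which is the correct name for the non-divergence analogue of what you call De~Giorgi--Nash--Moser). Instead it invokes \autoref{sec:appendix} directly: as stated there, the $W^{2,1}_p$-constant depends only on $\|a\|_{L^\infty}$, $\|c\|_{L^\infty}$ and $a_T$, not on any modulus of continuity, so one gets a uniform $W^{2,1}_p$-bound over all $v\in X_R$ and compactness follows from \autoref{W12p-embedding}. Your final paragraph explicitly dismisses this as unavailable, but within the paper's framework it is exactly the intended mechanism. Your H\"older-estimate alternative is nonetheless valid and more robust if one is skeptical of the modulus-independence claim in \autoref{sec:appendix}.

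\textbf{Continuity of $\Phi$.} Here is the gap. You pass to the limit ``in the weak formulation'' of \eqref{LineareHG1} knowing only that $u_n=\Phi(v_n)\to\tilde u$ uniformly. For a non-divergence equation $\partial_t u_n=\e^{-2v_n}\Delta_{\bar g}u_n+d_n$ this is insufficient: testing with $\phi\in C^\infty_c$ leaves either $\int \e^{-2v_n}(\Delta_{\bar g}u_n)\phi$, with no control on $\Delta_{\bar g}u_n$, or after multiplying by $\e^{2v_n}$ and integrating by parts in $t$, a term involving $\partial_t(\e^{2v_n})$, which need not exist. Hence you cannot conclude that $\tilde u$ is a $W^{2,1}_p$-solution to which the uniqueness of \autoref{sec:appendix} applies. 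The patch is short: along a \emph{fixed} convergent sequence $v_n\to v$ in $X$ the moduli of continuity of $\e^{-2v_n}$ are uniformly controlled, so even a modulus-dependent $W^{2,1}_p$-estimate gives a uniform bound on $(u_n)$ in $W^{2,1}_p$, and one may pass to the limit weakly there. The paper instead writes the difference equation for $u_n-u$ and applies \autoref{sec:appendix} directly to obtain
\[
\|u_n-u\|_{W^{2,1}_p}\le C\bigl(\|\e^{-2v_n}-\e^{-2v}\|_X\,\|\Delta_{\bar g}u\|_{L^p}+\|d_n-d\|_{L^p}\bigr)\to 0.
\]

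\textbf{$H^1$-continuity at $t=0$.} Your trace-theorem route $W^{2,1}_p\hookrightarrow C\bigl([0,T];W^{2-2/p,p}\bigr)\hookrightarrow C([0,T];H^1)$ is correct and shorter than the paper's argument, which first shows $t\mapsto\|u(t)\|_{H^1}^2$ is uniformly continuous on $(0,T)$, then combines weak $H^1$-convergence $u(t_n)\rightharpoonup u_0$ with the one-sided norm inequality \eqref{eq:H-1-inequality-appendix} from \autoref{sec:appendix} to upgrade to strong convergence.
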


\begin{proof}
{\bf Step 1:} First we recall Schauder's Theorem: If $H$ is a nonempty, convex, and closed subset of a Banach space $B$ and $F$ is a continuous mapping of $H$ into itself such that $F(H)$ is a relatively compact subset of $H$, then $F$ has a fixed point.

In our case, $B\hat{=}X=C([0,T] \times M)$, $H\hat{=}X_R=\{u\in X\mid \|u\|_X=\|u\|_{C_tC_x}\le R\}$, and $F\hat{=}\Phi$. So to show the existence of a fixed point of $\Phi$ in $X_R$, it remains to show that
\begin{enumerate}
\item $\Phi: X_R\to X_R$ is continuous and
\item $\Phi(X_R)\subset X_R$ is relatively compact.
\end{enumerate}

First, we show that $\Phi:X_R\to X_R$ is continuous. For this, let $v\in X_R$, and let $(v_n)_{n}\subset X_R$ be a sequence with $\|v_n-v\|_X\to0$. Moreover, let $u= \Phi(v)$ and $u_n= \Phi(v_n)$ for $n \in \N$.
By \autoref{sec:appendix}, we know that 
\[
  \|u_n\|_{W^{2,1}_p}\le C(\|u_0\|_{W^{2,p}(M,\bar g)}+\|d_n\|_{L^p_tL^p_x})\qquad \text{and}\qquad \|u\|_{W^{2,1}_p}\le C(\|u_0\|_{W^{2,p}(M,\bar g)}+\|d\|_{L^p_tL^p_x})\]
  for $n \in \N$ with
\[d_n(t):=\bar K\left(\frac1A-\e^{-2v_n(t)}\right)+f-\frac1A\int_Mf\e^{2v_n(t)}d\mu_{\bar g}\qquad \text{and}\qquad d(t):=\bar K\left(\frac1A-\e^{-2v(t)}\right)+f-\frac1A\int_Mf\e^{2v(t)}d\mu_{\bar g}.\]
Since $v_n\to v$ in $X$, we have $\e^{\pm2v_n}\to\e^{\pm2v}$ and therefore also $d_n\to d$ in $X$, which also implies that $d_n\to d$ in $L^p_tL^p_x$ for all $p$. Moreover, the difference $u_n-u= \Phi(v_n)-\Phi(v)$ fulfils the equation
\begin{align*}
\partial_t(u_n-u)(t)&=\e^{-2v_n(t)}\Delta_{\bar g}u_n(t)+d_n(t)-\e^{-2v(t)}\Delta_{\bar g}u(t)- d(t)\\
&=\e^{-2v_n(t)}\Delta_{\bar g}(u_n-u)(t)+(\e^{-2v_n(t)}-\e^{-2v(t)})\Delta_{\bar g}u(t)+d_n(t)-d(t).
\end{align*}
Since also $[u_n-u](0)=0$, we have, again by \autoref{sec:appendix},  
\begin{align*}
\|u_n-u\|_{W^{2,1}_p}&\le C\|(\e^{-2v_n}-\e^{-2v})\Delta_{\bar g}u+d_n-d\|_{L^p_tL^p_x}\\
&\le C\left(\|\e^{-2v_n}-\e^{-2v}\|_{X}\|\Delta_{\bar g}u\|_{L^p_tL^p_x}+\|d_n-d\|_{L^p_tL^p_x}\right)
\end{align*}
Since $\|\Delta_{\bar g}u\|_{L^p_tL^p_x}$ is finite, it thus follows that $\Phi(v_n)-\Phi(v)= u_n-u \to 0$ in $W^{2,1}_p$ and therefore also $\Phi(v_n)-\Phi(v) \to 0$ in $X$, since $W^{2,1}_p$ is embedded in $X$ by \autoref{W12p-embedding}.
Together with \ref{lemma-T-condition}, this shows the continuity of $\Phi:X_R\to X_R$.\

Next, we show that $\Phi(X_R)$ is relatively compact. For this let $(u_n)_{n\in\N}\subset \Phi(X_R)$ be an arbitrary sequence in $\Phi(X_R)$, and let $v_n\in X_R$ with $\Phi(v_n)=u_n$ for $n \in \N$.
So, by definition of $\Phi$ and by \autoref{sec:appendix}, we see that 
\begin{align*}
\|u_n\|_{W^{2,1}_p}&\le C
\left(\|u_0\|_{W^{2,p}(M,\bar g)}+\frac{T|\bar K|}{A}+\|\bar K\e^{-2v_n}\|_{L^p_tL^p_x}+\|f\|_{L^p_tL^p_x}+\left\|\frac1A\int_Mf\e^{2v_n}d\mu_{\bar g}\right\|_{L^p_tL^p_x}\right)\\
&\le C\left(\|u_0\|_{W^{2,p}(M,\bar g)}+\frac{T|\bar K|}{A}+|\bar K|\e^{2R}+T\|f\|_{L^\infty(M,\bar g)}+\frac{T}A\|f\|_{L^\infty(M,\bar g)}\e^{2R}\right)
\end{align*}
for $n \in \N$. Hence $(u_n)_{n\in\N}$ is uniformly bounded in $W^{2,1}_p((0,T)\times M)$. Using now that $W^{2,1}_p((0,T)\times M)$ is compactly embedded in $X$ by \autoref{W12p-embedding}, we conclude the claim.\\
We have thus proved that $\Phi$ has a fixed point $u$ in $X_R$, which then is a (strong) solution $u \in W^{2,1}_p((0,T)\times M)$ of \eqref{HG1}, \eqref{HG2}.

{\bf Step 2:} We now show that $u \in C^\infty((0,T)\times M)$. To see this, we first note the trivial fact that $u \in W^{2,1}_p((0,T)\times M)$ is a strong solution of \eqref{LineareHG1}, \eqref{LineareHG2} with $v = u$. Since then $v \in W^{2,1}_p((0,T)\times M) \subset C^\alpha([0,T]\times M)$, \cite[Theorems 5.9 and 5.10]{Lie96} imply the existence of a classical solution $\tilde u \in X \cap C^{2+\alpha',1+\alpha'}_{loc}((0,T)\times M)$ of \eqref{LineareHG1}, \eqref{LineareHG2} with $v = u$ for some $\alpha'>0$. Here $C^{2+\alpha',1+\alpha'}_{loc}((0,T)\times M)$ denotes the space of functions $f \in C^{2,1}((0,T)\times M)$ with the property that $\partial_t f$ and all derivatives up to second order of $f$ with respect to $x \in M$ are locally $\alpha'$-H\"older continuous. 
In particular, $\tilde u \in W^{2,1}_p((\varepsilon,T-\varepsilon)\times M)$ for $\varepsilon \in (0,T)$. The function $w:= u- \tilde u \in W^{2,1}_p((\varepsilon,T-\varepsilon)\times M)$
is then a strong solution of the initial value problem
$$
\partial_t w(t)=\e^{-2v(t)}\Delta_{\bar g}w(t) \quad \text{for $t \in (\varepsilon,T-\varepsilon)$}, \qquad w(\varepsilon)= u(\varepsilon,\cdot)-\tilde u(\varepsilon,\cdot).
$$
By \autoref{max-principle} (ii) we then have $|w| \le \|u(\varepsilon,\cdot)-\tilde u(\varepsilon,\cdot)\|_{L^\infty(M,\bar g)}$ on $(\varepsilon,T-\varepsilon)\times M$, whereas $\|u(\varepsilon,\cdot)-\tilde u(\varepsilon,\cdot)\|_{L^\infty(M,\bar g)} \to 0$ as $\varepsilon \to 0$ by the continuity of $u$ and $\tilde u$. It thus follows that
$u \equiv \tilde u$ on $(0,T)\times M)$, and therefore $u \in C^{2+\alpha',1+\alpha'}_{loc}((0,T)\times M)$. Since $u$ solves \eqref{LineareHG1}, \eqref{LineareHG2} with $v = u \in C^{2+\alpha',1+\alpha'}_{loc}((0,T)\times M)$, we can apply \cite[Theorems 5.9]{Lie96} and the above argument again to get $u \in 
C^{4+\alpha'',2+\alpha''}_{loc}((0,T)\times M)$ for some $\alpha''>0$.
Repeating this argument inductively, we get $u \in C^{k,\frac{k}{2}}_{loc}((0,T)\times M)$ for every $k>0$, and hence $u \in C^\infty((0,T)\times M)$.\\
{\bf Step 3:} It remains to show that any solution $u \in X \cap C^{\infty}((0,T)\times M)$ of \eqref{HG1}, \eqref{HG2} also satisfies $u \in C([0,T), H^1(M,\bar g))$. Since $u \in C^{\infty}((0,T)\times M)$, only the continuity in $t=0$ needs to be proved. Setting $\phi(t)= \|u(t)\|_{H^1(M,\bar g)}^2$ for $t \in (0,T)$, we see that
\begin{align*}
\frac{1}{2}(\phi(t_2)-\phi(t_1)) &=  \frac{1}{2} \int_{t_1}^{t_2}\partial_t \|u(t)\|_{H^1(M,\bar g)}^2\,dt =\int_{t_1}^{t_2}\int_{M} \Bigl(u(t) \partial_t u(t) + \nabla u(t) \nabla \partial_t u(t)\Bigr)d\mu_{\bar g}dt \\
&=\int_{t_1}^{t_2}\int_{M} \Bigl(u(t) \partial_t u(t) - [\Delta u(t)] \partial_t u(t)\Bigr)d\mu_{\bar g}dt
\end{align*}
and therefore, by H\"older's inequality,
\begin{align*}
\frac{1}{2}|\phi(t_2)-\phi(t_1)|&\le  \int_{t_1}^{t_2} \int_{M}\bigl(|u||\partial_t u|+ |\Delta u||\partial_t u|\bigr)d\mu_{\bar g}dt\\
&\le C \|\partial_t u\|_{L^p((0,T) \times M)} \bigl(\|u\|_{L^p((0,T) \times M)} + \|\Delta u\|_{L^p((0,T) \times M)} \bigr) (t_2-t_1)^\beta\\
&\le C \|u\|_{W^{1,2}_p((0,T) \times M)}(t_2-t_1)^\beta,
\end{align*}
for $0<t_1<t_2<T$ with some $\beta >0$ depending on $p>2$, which implies that the function $\phi$ is uniformly continuous and therefore bounded on $(0,T)$.

We now assume by contradiction that $u$ is not continuous at $t=0$ with respect to the $H^1(M,\bar g)$-norm. Then there exists a sequence $(t_n)_{n\in\N}$ in $(0,T)$ and $\varepsilon>0$ with
$t_n \to 0^+$ as $n \to \infty$ and
\begin{equation}
   \label{eq:contradiction-inequality}
\|u(t_n)-u_0\|_{{H^1(M,\bar g)}} \ge \varepsilon \qquad \text{for all $n \in \N$.}   
\end{equation}
Since $\|u(t_n)\|_{H^1(M,\bar g)}^2 = \phi(t_n)$ remains bounded as $n \to \infty$, we conclude that, passing to a subsequence, the sequence $u(t_n)$ converges weakly in $H^1(M,\bar g)$ and therefore strongly in $L^2(M, \bar g)$. Since the strong $L^2$-limit of $u(t_n)$ must be $u_0=u(0)$ as a consequence of the fact that $u \in X$, we deduce that $u(t_n) \rightharpoonup u_0$ weakly in ${H^1(M,\bar g)}$ as $n \to \infty$. Combining this information with \autoref{sec:appendix} from the appendix, we deduce that
\begin{equation}
\label{eq:limsup-liminf-ineq}
\limsup_{n \to \infty}\|u(t_n)\|_{H^1(M,\bar g)}^2 \le \|u_0\|_{H^1(M,\bar g)}^2 \le \liminf_{n \to \infty}\|u(t_n)\|_{H^1(M,\bar g)}^2   
\end{equation}
and therefore $\|u(t_n)\|_{H^1(M,\bar g)} \to \|u_0\|_{H^1(M,\bar g)}$. Note here that this part of \autoref{sec:appendix} applies since $u$ solves \eqref{LineareHG1}, \eqref{LineareHG2} with $v = u  \in W^{2,1}_p((0,T)\times M) \subset C^\alpha([0,T]\times M)$ for some $\alpha>0$. 
From \eqref{eq:limsup-liminf-ineq} and the uniform convexity of the Hilbert space ${H^1(M,\bar g)}$, we conclude that $u(t_n) \to u_0$ strongly in $H^1(M,\bar g)$, contrary to \eqref{eq:contradiction-inequality}.
\end{proof}

\subsection{Uniqueness}\label{Uniqueness}
We now show that the solution from \autoref{existence} is unique.
\begin{lemma}
\label{lemma-uniqueness-nonlinear-short-time}
Let $u_0 \in W^{2,p}(M,\bar g)$, $p>2$, and $T>0$ be fixed with \eqref{eq:T-condition}. Then the short-time solution of $u \in X \cap C^{\infty}((0,T)\times M)$ of \eqref{HG1}, \eqref{HG2} given by \autoref{existence} is unique.
\end{lemma}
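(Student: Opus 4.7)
My approach is a Gronwall argument on the $L^2$-norm of $w:=u_1-u_2$, where $u_1,u_2\in X\cap C^\infty((0,T)\times M)$ are two solutions of \eqref{HG1}, \eqref{HG2}. A preliminary observation is that both $u_i$ automatically belong to $W^{2,1}_p((0,T)\times M)$: each $u_i$ solves the linear problem \eqref{LineareHG1}, \eqref{LineareHG2} with frozen coefficient $v=u_i\in X$ and initial datum $u_0\in W^{2,p}(M,\bar g)$, so by the $L^p$-parabolic theory recalled in \autoref{sec:appendix} together with uniqueness for that linear problem, $u_i$ coincides with the $W^{2,1}_p$-solution produced there. Consequently $\Delta u_i\in L^p_tL^p_x\subset L^2_tL^2_x$, and the two-dimensional Sobolev embedding $W^{2,p}(M,\bar g)\hookrightarrow W^{1,\infty}(M,\bar g)$ (valid for $p>2$) yields $\|\nabla u_i(\cdot)\|_{L^\infty(M,\bar g)}\in L^p(0,T)$.

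Subtracting the two equations and applying the mean value theorem to write $\e^{-2u_1}-\e^{-2u_2}=-2q\,w$ and $\e^{2u_1}-\e^{2u_2}=2r\,w$ with $q,r\in L^\infty([0,T]\times M)$ bounded in terms of $\|u_i\|_X$, we obtain
\begin{equation*}
\partial_t w = \e^{-2u_1}\Delta_{\bar g} w - 2q\,(\Delta_{\bar g} u_2)\,w + 2\bar K\, q\, w - \frac{2}{A}\int_M f\,r\,w\,d\mu_{\bar g},\qquad w(0)=0.
\end{equation*}
I would test against $w$ and integrate over $M$. Integration by parts on the principal term together with Young's inequality on the cross term $\e^{-2u_1}w\,\nabla_{\bar g}u_1\cdot\nabla_{\bar g}w$ absorbs half of the dissipation $\|\nabla_{\bar g} w\|_{L^2(M,\bar g)}^2$ and leaves a contribution controlled by $\|\nabla_{\bar g} u_1\|_{L^\infty(M,\bar g)}^2\|w\|_{L^2(M,\bar g)}^2$. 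The nonlocal term is bounded by $C\|w(t)\|_{L^2(M,\bar g)}^2$ via $\|w\|_{L^1}\le\|w\|_{L^2}$. The only delicate contribution is
\begin{equation*}
\int_M|\Delta_{\bar g} u_2|\,w^2\,d\mu_{\bar g}\le\|\Delta_{\bar g} u_2(t)\|_{L^2(M,\bar g)}\|w\|_{L^4(M,\bar g)}^2,
\end{equation*}
which I would control via the Gagliardo--Nirenberg--Lady\v{z}henskaya inequality from \autoref{eindSob}, giving $\|w\|_{L^4}^2\le C\|w\|_{L^2(M,\bar g)}\|w\|_{H^1(M,\bar g)}$, and then Young's inequality again to absorb the resulting $\|\nabla_{\bar g} w\|_{L^2(M,\bar g)}^2$-factor.

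After absorption, the estimate reduces to
\begin{equation*}
\frac{d}{dt}\|w(t)\|_{L^2(M,\bar g)}^2\le h(t)\,\|w(t)\|_{L^2(M,\bar g)}^2,\qquad h(t):=C\bigl(1+\|u_1(t)\|_{W^{2,p}(M,\bar g)}^2+\|\Delta_{\bar g} u_2(t)\|_{L^2(M,\bar g)}^2\bigr).
\end{equation*}
Since $p>2$ and $u_i\in L^p_tW^{2,p}_x$, H\"older's inequality in $t$ yields $h\in L^1(0,T)$. Applying Gronwall on $[\varepsilon,t]$ and letting $\varepsilon\to 0^+$, combined with $\|w(\varepsilon)\|_{L^2(M,\bar g)}\le\|w(\varepsilon)\|_{L^\infty(M,\bar g)}\to 0$ (from $w\in X$ with $w(0)=0$), forces $\|w(t)\|_{L^2(M,\bar g)}\equiv 0$, so that $u_1\equiv u_2$. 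The main obstacle is exactly the quasilinear term $(\e^{-2u_1}-\e^{-2u_2})\Delta_{\bar g} u_2$: since $\Delta_{\bar g} u_2$ is only $L^p$-integrable in space-time and may blow up as $t\to 0^+$, a naive pointwise bound would produce a Gronwall coefficient too singular to be integrable in time; the Gagliardo--Nirenberg--Lady\v{z}henskaya trade-off, which converts $\|w\|_{L^4}^2$ into $\|w\|_{L^2}\|w\|_{H^1}$, is precisely what reduces the coefficient to one in $L^1(0,T)$ and allows the argument to close.
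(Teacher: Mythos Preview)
Your argument is correct, and it takes a genuinely different route from the paper's proof. The paper tests the difference against both $w$ and $-2\Delta_{\bar g} w$, obtaining a Gronwall inequality for $\|w(t)\|_{H^1(M,\bar g)}^2$; it then closes the argument by invoking the continuity $u\in C([0,T),H^1(M,\bar g))$ established in Step~3 of \autoref{existence}. You instead retain the dissipative term $-\int_M\e^{-2u_1}|\nabla_{\bar g} w|^2\,d\mu_{\bar g}$ and absorb both the cross term (via $W^{2,p}(M,\bar g)\hookrightarrow W^{1,\infty}(M,\bar g)$, valid for $p>2$ in two dimensions) and the quasilinear remainder (via \autoref{eindSob} and Young) into it, so that the Gronwall inequality closes at the $L^2$-level alone. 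This is more economical: it dispenses with the second testing and, since you only need $\|w(\varepsilon)\|_{L^\infty(M,\bar g)}\to 0$, it avoids any appeal to $H^1$-continuity at $t=0$. The paper's approach, by contrast, yields an $H^1$-bound along the way but at the cost of the extra estimate and the nontrivial Step~3 of \autoref{existence}. One small point worth making explicit: your claim that $u_i\in W^{2,1}_p((0,T)\times M)$ requires identifying the given $u_i\in X\cap C^\infty((0,T)\times M)$ with the $W^{2,1}_p$-solution of the linear problem from \autoref{sec:appendix}; this follows by the same maximum-principle comparison as in Step~2 of the proof of \autoref{existence}, applied on $[\varepsilon,T]\times M$ and letting $\varepsilon\to 0^+$.
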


\begin{proof}
Let $u_1, u_2 \in  X \cap C^{\infty}((0,T)\times M)$ be two solutions of \eqref{HG1}, \eqref{HG2}. The difference $u:= u_1-u_2 \in  X \cap C^{\infty}((0,T)\times M)$ then fulfils 
\begin{equation}\label{diff-equation-uniqueness}
\begin{split}
\partial_t u(t)&=\e^{-2u_1(t)}\Delta_{\bar g}u_1(t)-\e^{-2u_2(t)}\Delta_{\bar g}u_2(t)\\
               &\phantom{aaaaa}-\bar K(\e^{-2u_1(t)}-\e^{-2u_2(t)})-\frac1A\int_Mf(\e^{2u_1(t)}-\e^{2u_2(t)})d\mu_{\bar g} \\
&=  \e^{-2u_1(t)}\Delta_{\bar g}u(t) + \Delta_{\bar g}u_2(t)\bigl( \e^{-2u_1(t)} -\e^{-2u_2(t)}\bigr)\\
               &\phantom{aaaaa}-\bar K(\e^{-2u_1(t)}-\e^{-2u_2(t)})-\frac1A\int_Mf(\e^{2u_1(t)}-\e^{2u_2(t)})d\mu_{\bar g}\quad \text{for $t \in (0,T)$} .
\end{split}
\end{equation}

In the following, the letter $C$ denotes different positive constants. Multiplying \eqref{diff-equation-uniqueness} with $2u$ and integrating over $M$ gives
\begin{align}
 \frac{d}{dt}& \|u(t)\|_{L^2(M,\bar g)}^2 = 2 \int_{M} u(t) \partial_t u(t) d\mu_{\bar g} \nonumber\\
 &=2\int_M\e^{-2u_1(t)}u(t)\Delta_{\bar g}u(t)d\mu_{\bar g} + 2\int_Mu(t)\Delta_{\bar g}u_2(t)\bigl( \e^{-2u_1(t)} -\e^{-2u_2(t)}\bigr)d\mu_{\bar g}\\
               &\qquad-2\int_M\bar Ku(t)(\e^{-2u_1(t)}-\e^{-2u_2(t)})d\mu_{\bar g}-\frac2A\int_Mf(\e^{2u_1(t)}-\e^{2u_2(t)})d\mu_{\bar g}\int_Mu(t)d\mu_{\bar g}\nonumber\\
&\le 2 \int_M \e^{-2u_1(t)} u(t) \Delta_{\bar g}u(t) + 2\int_M V(t,x)u^2(t) + 2\rho(t) \|u(t)\|_{L^2(M,\bar g)} \int_M |u(t)|d\mu_{\bar g} \nonumber\\
 &\le 2 \left(-\int_M \e^{-2u_1(t)} |\nabla_{\bar g} u(t)|_{\bar g}^2 + 2 \int_M  \e^{-2u_1(t)} u(t) \langle\nabla_{\bar g} u_1(t),\nabla_{\bar g}u(t)\rangle_{\bar g}d\mu_{\bar g}\right)\nonumber\\
  &\qquad + 2\|V(t,\cdot)\|_{L^p(M,\bar g)} \|u(t)\|_{L^{2p'}(M,\bar g)}^2 + C \|u(t)\|_{L^2(M,\bar g)}^2 \nonumber\\
&\le C \|\nabla_{\bar g}u_1(t)\|_{L^4(M,\bar g)} \|u(t)\|_{L^4(M,\bar g)} \|\nabla_{\bar g}u(t)\|_{L^2(M,\bar g)}\nonumber\\
&\qquad+ 2\|V(t,\cdot)\|_{L^p(M,\bar g)} \|u(t)\|_{L^{2p'}(M,\bar g)}^2 +C \|u(t)\|_{L^2(M,\bar g)}^2 \nonumber\\  
&\le C \Bigl(\|u_1(t)\|_{H^2(M,\bar g)} \|u(t)\|_{H^1(M,\bar g)}^2  + 2\|V(t,\cdot)\|_{L^p(M,\bar g)} \|u(t)\|_{H^1(M,\bar g)}^2 +  \|u(t)\|_{L^2(M,\bar g)}^2\Bigr) \nonumber\\
  &\le C \Bigl(\|u_1(t)\|_{H^2(M,\bar g)}   + 2\|V(t,\cdot)\|_{L^p(M,\bar g)}+1\Bigr) \|u\|_{H^1(M,\bar g)}^2,\label{gronwall-H1-1}
\end{align}
with functions $V \in L^p((0,T) \times M) \cap C^\infty((0,T) \times M)$ and $\rho \in L^\infty(0,T)$. 
Here we used the Sobolev embeddings $H^1(M,\bar g) \hookrightarrow L^\rho(M)$ for $\rho \in [1,\infty)$. Multiplying \eqref{diff-equation-uniqueness} with $-2\Delta u$ and integrating over $M$ yields
\begin{align}
  \frac{d}{dt} &\|\nabla_g u(t)\|_{L^2(M,\bar g)}^2 = 2 \int_M \nabla  u(t) \nabla \partial_t u(t)d\mu_{\bar g}= -2 \int_M \Delta_g u(t) \partial_t u(t)d\mu_{\bar g} \nonumber\\
  &\le - 2 \int_M \e^{-2u_1(t)} |\Delta_{\bar g}u(t)|^2 d\mu_{\bar g} +2 \int_M V(t,x)|u(t)||\Delta u(t)|d\mu_{\bar g}  \nonumber\\
                                                 &\le -\kappa  \|\Delta_{\bar g}u(t)\|_{L^2(M,\bar g)}^2 +
                                                   2\|V(t,\cdot)\|_{L^p(M,\bar g)} \|u\|_{L^\alpha(M,\bar g)} \|\Delta_g u\|_{L^2(M,\bar g)} \nonumber\\
                                                &\le -\kappa  \|\Delta_{\bar g}u(t)\|_{L^2(M,\bar g)}^2 + 
                                                  \frac{1}{\kappa} \|V(t,\cdot)\|_{L^p(M,\bar g)}^2 \|u\|_{L^\alpha(M,\bar g)}^2 + \kappa \|\Delta_g u\|_{L^2(M,\bar g)}^2 \nonumber\\
&=  \frac{1}{\kappa} \|V(t,\cdot)\|_{L^p(M,\bar g)}^2 \|u\|_{L^\alpha(M,\bar g)}^2 \le C \|V(t,\cdot)\|_{L^p(M,\bar g)}^2 \|u\|_{H^1(M,\bar g)}^2,  \label{gronwall-H1-2}
\end{align}
where we used first H\"older's inequality with $\alpha = \frac{2p}{p-2}$, then Young's inequality and finally Sobolev embeddings again. Here we note that, by making $C>0$ larger if necessary, we may assume that the constants are the same in \eqref{gronwall-H1-1} and \eqref{gronwall-H1-2}. Combining these estimates gives
\begin{equation}
  \label{eq:gronwall-condition}
  \frac{d}{dt} \|u(t)\|_{H^1(M,\bar g)}^2 \le g(t) \|u(t)\|_{H^1(M,\bar g)}^2 \qquad \text{for $t \in (0,T)$}
\end{equation}
with the function $g \in L^1(0,T)$ given by $g_1(t)= C \Bigl(\|u_1(t)\|_{H^2(M,\bar g)}   + 3\|V(t,\cdot)\|_{L^p(M,\bar g)}+1\Bigr)$. Integrating and using the fact that $u \in C([0,T), H^1(M,\bar g))$ by \autoref{existence} with $u(0)=u_1(0)-u_2(0)=0$, we see that 
$$
\|u(t)\|_{H^1(M,\bar g)}^2 \le \int_0^t g(s)\|u(s)\|_{H^1(M,\bar g)}^2\,ds \qquad \text{for $t \in [0,T)$.}
$$
It then follows from Gronwall's inequality \cite{CazHar99} that $\|u(t)\|_{H^1(M,\bar g)}^2 \equiv 0$ on $[0,T)$, hence $u_1 \equiv u_2$.
\end{proof}

\subsection{Global Existence}

Let $f \in C^\infty(M)$, $A>0$, $p>2$ and $u_0  \in \mathcal{C}_{p,A}$. In this section, we wish to show that the (unique) local solution 
\[u\in C([0,T]\times M)\cap C([0,T],H^1(M,\bar g))\cap C^\infty((0,T)\times M)\]
of the initial value problem \eqref{HG1}, \eqref{HG2} for small $T>0$ can be extended to a global und uniformly bounded solution defined for all positive times.

We first need the following local boundedness property on open time intervals.

\begin{lemma}\label{UpperULowerK}
Let, for some $T>0$, $u\in C([0,T)\times M)\cap C([0,T),H^1(M,\bar g))\cap C^\infty((0,T)\times M)$
be a solution of \eqref{HG1}, \eqref{HG2} on $[0,T)$. Then we have 
\begin{equation}
  \label{eq:UpperULowerL}
  \sup_{t\in[0,T)}\|u(t)\|_{L^\infty(M,\bar g)} \le \mathcal{M}
\end{equation}
with some $\mathcal{M}=\mathcal{M}(\|u_0\|_{L^\infty(M,\bar g)},\|f\|_{L^\infty(M,\bar g)},T)>0$ which is increasing in all of its variables.
\end{lemma}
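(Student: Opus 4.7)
The plan is to establish \eqref{eq:UpperULowerL} by two separate applications of the parabolic maximum principle to \eqref{HG1}: one comparison gives an upper bound on $\max_{x\in M} u(t,\cdot)$ via a supersolution of an ODE, the other gives a lower bound on $\min_{x\in M} u(t,\cdot)$ via a well-chosen constant. Throughout, I would set $C_2 := \|f\|_{L^\infty(M,\bar g)} + \alpha_0$ with $\alpha_0$ as in \eqref{Lower-Upper-Boundtildealpha}, so that part 2 of \autoref{Properties1} yields $|f - \alpha(t)| \le C_2$ pointwise, and work on the compact set $[0,T']\times M$ for arbitrary $T' \in (0,T)$ so that suprema and infima are attained (this uses only $u \in C([0,T)\times M)$; no appeal to Hamilton's trick is needed).

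For the upper bound, let $\Phi$ solve the ODE $\dot\Phi(t) = |\bar K|\,\e^{-2\Phi(t)} + C_2$ with $\Phi(0) = \|u_0\|_{L^\infty(M,\bar g)}$, and set $w := u - \Phi$. Then $w(0,\cdot)\le 0$. Assuming by contradiction that $\sup_{[0,T']\times M} w > 0$, the supremum is attained at some $(t_0,x_0)$ with $t_0 > 0$, at which $\partial_t w(t_0,x_0) \ge 0$ and $\Delta_{\bar g}u(t_0,x_0) \le 0$. Substituting into \eqref{HG1} gives
\[
\partial_t u(t_0,x_0) \le |\bar K|\,\e^{-2u(t_0,x_0)} + C_2 < |\bar K|\,\e^{-2\Phi(t_0)} + C_2 = \dot\Phi(t_0),
\]
where the strict inequality uses $u(t_0,x_0) > \Phi(t_0)$. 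This contradicts $\partial_t w(t_0,x_0) \ge 0$, so $u \le \Phi$ on $[0,T')\times M$. Since $\Phi(0) \ge 0$ and $\Phi$ is increasing with $\dot\Phi \le |\bar K| + C_2$ whenever $\Phi \ge 0$, one concludes $u(t,x) \le \|u_0\|_{L^\infty(M,\bar g)} + (|\bar K| + C_2)\,T$.

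For the lower bound, I would compare $u$ with the \emph{constant} $\mathcal{N}_0 := \min\bigl(-\|u_0\|_{L^\infty(M,\bar g)},\,\tfrac12\log(|\bar K|/C_2)\bigr)$, which is designed so that $|\bar K|\,\e^{-2\mathcal{N}_0} \ge C_2$. Setting $w := u - \mathcal{N}_0$, one has $w(0,\cdot)\ge 0$. If $\inf_{[0,T']\times M} w < 0$, then this infimum is attained at some $(t_0,x_0)$ with $t_0>0$, yielding $\partial_t w(t_0,x_0) \le 0$ and $\Delta_{\bar g}u(t_0,x_0) \ge 0$. From \eqref{HG1} one gets
\[
\partial_t u(t_0,x_0) \ge |\bar K|\,\e^{-2u(t_0,x_0)} - C_2 > |\bar K|\,\e^{-2\mathcal{N}_0} - C_2 \ge 0,
\]
using $u(t_0,x_0) < \mathcal{N}_0$ and the defining property of $\mathcal{N}_0$. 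This contradicts $\partial_t w(t_0,x_0) \le 0$, so $u \ge \mathcal{N}_0$ throughout. Combining the two bounds produces the required $\mathcal{M}$, which is manifestly monotone-increasing in $\|u_0\|_{L^\infty(M,\bar g)}$, $\|f\|_{L^\infty(M,\bar g)}$ (through $C_2$), and $T$. No real obstacle arises: the only technical point is the routine verification that, owing to the smoothness $u \in C^\infty((0,T)\times M)$ and the sign of $\bar K<0$, the dangerous term $-\bar K\,\e^{-2u}$ in \eqref{HG1} has opposite signs at spatial maxima versus minima, which is exactly what drives each of the two comparison arguments.
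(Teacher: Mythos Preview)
Your argument is correct, but it takes a genuinely different route from the paper's proof.

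The paper first derives the \emph{lower} bound: writing $v=-u$ and dropping the favourable term $-\e^{2v}|\bar K|\le 0$, it applies the abstract maximum principle of \autoref{max-principle}(ii) to get $u\ge -\|u_0\|_{L^\infty(M,\bar g)}-Tc$ with $c=\|f\|_{L^\infty(M,\bar g)}+\alpha_0$ (your $C_2$). Only \emph{then}, feeding this lower bound back into $|\bar K|\e^{-2u}\le |\bar K|\e^{2(\|u_0\|_{L^\infty}+Tc)}$, does it obtain the upper bound, again via \autoref{max-principle}(ii). The resulting $\mathcal{M}$ therefore involves a term of the form $T\e^{2(\|u_0\|_{L^\infty}+Tc)}$.

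Your approach decouples the two estimates by choosing tailored barriers: the ODE supersolution $\Phi$ for the upper bound, and the constant $\mathcal{N}_0$ for the lower bound. Both comparisons are carried out by the classical ``argue at an interior extremum'' device, which is legitimate here because $u\in C^\infty((0,T)\times M)$ and you restrict to compact $[0,T']\times M$. This buys you two things: (a) a more elementary proof that avoids the $W^{2,1}_p$ maximum principle machinery of the appendix, and (b) quantitatively sharper bounds --- your upper bound grows only linearly in $T$, and your lower bound $\mathcal{N}_0$ is even $T$-independent. Conversely, the paper's use of \autoref{max-principle} is more in keeping with the regularity framework set up elsewhere and does not rely on interior smoothness having been established first.
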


\begin{proof}
Since $\bar K <0$, we have 
$$
\partial_tu(t)=\e^{-2u(t)}\Delta_{\bar g}u(t)-\e^{-2u(t)}\bar K+f-\alpha(t)=\e^{-2u(t)}\Delta_{\bar g}u(t)+\e^{-2u(t)}|\bar K|+ f-\alpha(t) \qquad \text{for $t \in [0,T)$}
$$
by \eqref{HG1}, where
$$
|\alpha(t)| \le \alpha_0:=\|f\|_{L^\infty(M,\bar g)}+\frac{|\bar K|}{A} \qquad \text{for $t \in [0,T)$}
$$
by \eqref{Lower-Upper-Boundtildealpha}. Hence the function $v=-u$ satisfies  
$$
\partial_t v(t)= \e^{2 v(t)}\Delta_{\bar g}v(t)-\e^{2v(t)}|\bar K| -f+\alpha(t) \le
\e^{2 v(t)}\Delta_{\bar g}v(t)+ c \qquad \text{for $t \in (0,T)$} 
$$
with $c = \|f\|_{L^\infty(M,\bar g)}+ \alpha_0$. Next, let $(T_k)_k \subset (0,T)$ be a sequence with $T_k\to T$ for $k\to\infty$. For fixed $k \in \N$ the continuous function $\e^{2v}$ is then bounded from below by a positive constant on the compact set $[0,T_k]\times M$. Therefore \autoref{max-principle} (ii) from the appendix implies that 
$$
v(t,x) \le \|u_0\|_{L^\infty(M,\bar g)} + T_k c \qquad \text{for $(t,x) \in [0,T_k]\times M$.}
$$
Letting $k \to \infty$, we deduce that 
\begin{equation}
  \label{eq:lower-bound-U}
u(t,x) =- v(t,x) \ge - \|u_0\|_{L^\infty(M,\bar g)} - T c \qquad \text{for $(t,x) \in [0,T)\times M$.}
\end{equation}
In order to derive an upper bound for $u$, we now observe that
$$
\partial_tu(t)=\e^{-2u(t)}\Delta_{\bar g}u(t)+\e^{-2u(t)}|\bar K|+ f-\alpha(t)
\le \e^{-2u(t)} \Delta_{\bar g}u(t) + \e^{2(\|u_0\|_{L^\infty(M,\bar g)} + T c)}+ c 
$$
on $M$ for $t \in [0,T)$. Applying \autoref{max-principle} (ii) in the same way as above therefore gives  
\begin{equation}
  \label{eq:upper-bound-U}
u(t,x) \le \|u_0\|_{L^\infty(M,\bar g)} +  T \Bigl(\e^{2(\|u_0\|_{L^\infty(M,\bar g)} + T c)}+ c\Bigr).
\end{equation}
Combining \eqref{eq:lower-bound-U} and \eqref{eq:upper-bound-U} yields
\begin{equation}\label{UniformBoundU}
\begin{split}
  &\sup_{\substack{t\in[0,T)\\ x\in M}}|u(t,x)| \le \mathcal{M}\quad \text{with}\\
  &\mathcal{M}=\mathcal{M}(\|u_0\|_{L^\infty}(M,\bar g),\|f\|_{L^\infty(M,\bar g)},T):= \|u_0\|_{L^\infty} +  T \Bigl(\e^{2(\|u_0\|_{L^\infty(M,\bar g)} + T c)}+ c\Bigr),
  \end{split}
\end{equation}
as claimed in \eqref{eq:upper-bound-U}.
\end{proof}

\begin{corollary}
The initial value problem \eqref{HG1}, \eqref{HG2} admits a unique global solution $u\in C([0,\infty)\times M)\cap C([0,\infty),H^1(M,\bar g))\cap C^\infty((0,\infty)\times M)$.
\end{corollary}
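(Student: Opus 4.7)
The plan is the standard maximal-existence argument, combined with the three preceding results: short-time existence (\autoref{existence}), uniqueness (\autoref{lemma-uniqueness-nonlinear-short-time}), and the uniform $L^\infty$-bound on bounded time intervals (\autoref{UpperULowerK}).

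First I would set
\[
T_{\max} := \sup\bigl\{T>0 \mid \text{\eqref{HG1}, \eqref{HG2} admits a solution $u\in C([0,T]\times M)\cap C([0,T],H^1(M,\bar g))\cap C^\infty((0,T)\times M)$}\bigr\},
\]
which is strictly positive by \autoref{existence}. Uniqueness on each compact subinterval $[0,T]\subset[0,T_{\max})$ is granted by \autoref{lemma-uniqueness-nonlinear-short-time}, and a standard gluing argument (concatenating solutions that agree on their common interval of definition) produces a well-defined maximal solution $u\in C([0,T_{\max})\times M)\cap C([0,T_{\max}),H^1(M,\bar g))\cap C^\infty((0,T_{\max})\times M)$.

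Next I would argue by contradiction: suppose $T_{\max}<\infty$. By \autoref{UpperULowerK}, applied on $[0,T_{\max})$, there exists $\mathcal{M}=\mathcal{M}(\|u_0\|_{L^\infty(M,\bar g)},\|f\|_{L^\infty(M,\bar g)},T_{\max})$ with $\|u(t)\|_{L^\infty(M,\bar g)}\le\mathcal{M}$ for all $t\in[0,T_{\max})$. Define
\[
\tau := \Bigl(|\bar K|\e^{2(\mathcal{M}+1)}+\|f\|_{L^\infty(M,\bar g)}\bigl(1+\tfrac{\e^{2(\mathcal{M}+1)}}{A}\bigr)\Bigr)^{-1}>0,
\]
which is precisely the lower bound on the short-time existence horizon given by \eqref{eq:T-condition} whenever the initial datum has $L^\infty$-norm at most $\mathcal{M}$. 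Now pick any $t_0\in(T_{\max}-\tau,T_{\max})$. Since $u\in C^\infty((0,T_{\max})\times M)$, we have $u(t_0)\in C^\infty(M)\subset W^{2,p}(M,\bar g)$, and by volume preservation (\autoref{Properties1}~(1)) we have $u(t_0)\in\mathcal{C}_{p,A}$. Applying \autoref{existence} with initial datum $u(t_0)$ produces a solution $\tilde u$ on $[t_0,t_0+\tau]$; the uniqueness lemma shows $\tilde u$ coincides with $u$ on $[t_0,T_{\max})$, so concatenation yields a solution on $[0,t_0+\tau]$ with $t_0+\tau>T_{\max}$, contradicting the definition of $T_{\max}$.

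Therefore $T_{\max}=\infty$, and the solution so obtained lies in $C([0,\infty)\times M)\cap C([0,\infty),H^1(M,\bar g))\cap C^\infty((0,\infty)\times M)$. Global uniqueness follows from applying \autoref{lemma-uniqueness-nonlinear-short-time} on every bounded interval $[0,T]$. The main (but mild) subtlety is verifying that the short-time horizon in \eqref{eq:T-condition} depends only on the $L^\infty$-norm of the initial datum (and on the fixed data $f,A,\bar K$), so that the uniform bound $\mathcal{M}$ supplied by \autoref{UpperULowerK} really does force a uniform lower bound $\tau$ on the length of the next step; this is immediate from the structure of \eqref{eq:T-condition}.
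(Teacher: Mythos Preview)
Your proposal is correct and is exactly the standard continuation argument that the paper invokes in one line: the paper's proof reads ``This follows from \autoref{existence}, \autoref{lemma-uniqueness-nonlinear-short-time} and \autoref{UpperULowerK} by a standard continuation argument using condition~\eqref{eq:T-condition}.'' You have simply written out the details of that argument, using the same three ingredients and the same observation that the short-time horizon in \eqref{eq:T-condition} depends only on $\|u_0\|_{L^\infty(M,\bar g)}$.
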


\begin{proof}
This follows from \autoref{existence}, \autoref{lemma-uniqueness-nonlinear-short-time} and \autoref{UpperULowerK} by a standard continuation argument using condition ~\eqref{eq:T-condition}.
\end{proof}

In the next lemma, with the help of \eqref{APrioriBound}, we turn \eqref{eq:UpperULowerL} into a uniform estimate for all time.

\begin{lemma}\label{Linfty}
  Let $u$ be the global, smooth solution of the initial value problem \eqref{HG1}, \eqref{HG2}. Then we have
  $$
  \sup_{t>0}\|u(t)\|_{L^\infty(M,\bar g)}\le \mathcal{N}
   $$
with some $\mathcal{N}=\mathcal{N}(u_0,\|f\|_{L^\infty(M,\bar g)})>0$ which is increasing in its second variable.
\end{lemma}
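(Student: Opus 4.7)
\medskip
The plan is to upgrade the locally-in-time $L^\infty$-bound from \autoref{UpperULowerK}, whose constant grows with $T$, to a bound which is uniform on $[0,\infty)$ and depends only on $u_0$ and $\|f\|_{L^\infty(M,\bar g)}$. The starting point is to view \eqref{HG1} at each time $t$ as the elliptic equation
\[
-\Delta_{\bar g}u(t) + \bar K = \e^{2u(t)}\bigl(f - \alpha(t) - \partial_t u(t)\bigr).
\]
Since $\dim M = 2$, the Sobolev embedding $W^{2,q}(M,\bar g)\hookrightarrow L^\infty(M,\bar g)$ holds for any $q>1$, so a uniform-in-$t$ $L^q$-bound on the right-hand side for some $q>1$ would, via elliptic $L^p$-theory, immediately give the desired uniform $L^\infty$-bound.

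To extract such a bound I would first collect the uniform ingredients already at our disposal: by \autoref{Properties1}\,(2),\,(7) the quantities $|\alpha(t)|$ and $\|\e^{\pm 2u(t)}\|_{L^s(M,\bar g)}$ are, for every $s\in[1,\infty)$, bounded uniformly in $t\ge 0$ by constants of the desired form. Moreover, combining the energy identity \eqref{APrioriBound} with the lower bound on $E_f(u(t))$ that results by running the computation \eqref{EstimateNabla1} in the opposite direction (and using \eqref{UniBarU}) yields the global dissipation estimate
\[
\int_0^\infty\int_M|\partial_tu(t)|^2\,d\mu_{g(t)}\,dt\le C\bigl(u_0,\|f\|_{L^\infty(M,\bar g)}\bigr).
\]
It remains to pass from these ingredients to a pointwise-in-$t$ control of the right-hand side of the elliptic equation above, for which the obstacle is the factor $\partial_t u(t)$.

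To this end I would perform a Moser-type iteration directly on the parabolic equation \eqref{HG1}: for large $k>0$, test against $w^{2p-1}$ with $w=(u-k)_+$ (and, symmetrically, with $w=(-u-k)_+$ to get the lower bound), integrate by parts, and use the exponential integrability estimate \eqref{Uniformexp} to absorb the lower-order terms arising from differentiating $\e^{-2u}$. The expected outcome is a differential inequality for $\|w(t)\|_{L^{2p}(M,\bar g)}^{2p}$ which, integrated against the dissipation bound above and the uniform bound on $|\alpha|$, produces uniform-in-$t$ bounds on $\|u(t)\|_{L^q(M,\bar g)}$ for every $q<\infty$; feeding this information back into the elliptic formulation then delivers $\sup_{t\ge 0}\|u(t)\|_{L^\infty(M,\bar g)}\le\mathcal N$ with $\mathcal N$ of the required form. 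The principal difficulty here, and the main obstacle of the proof, is the non-uniform parabolicity of \eqref{HG1} — the coefficient $\e^{-2u(t)}$ in front of $\Delta_{\bar g}u(t)$ has no uniform positive lower bound — which must be handled by carefully coupling the Moser iteration with the exponential integrability \eqref{Uniformexp} rather than by any direct parabolic comparison.
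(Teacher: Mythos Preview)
Your strategy has a genuine gap. The ``elliptic at fixed time'' argument requires, for some $q>1$, a bound on $\|\e^{2u(t)}\partial_t u(t)\|_{L^q(M,\bar g)}$ that is uniform in $t$, and the dissipation estimate
\[
\int_0^\infty \int_M |\partial_t u(t)|^2\,d\mu_{g(t)}\,dt \le C
\]
simply does not provide this: it is an $L^2_tL^2_x$-bound, not an $L^\infty_tL^q_x$-bound. Your proposed bridge, a parabolic Moser iteration on $w=(u-k)_+$, does not close either: integrating $\int_M \e^{-2u}w^{2p-1}\Delta_{\bar g}u\,d\mu_{\bar g}$ by parts produces, besides the good term $-(2p-1)\int_M \e^{-2u}w^{2p-2}|\nabla_{\bar g}u|^2$, the bad term $+2\int_M \e^{-2u}w^{2p-1}|\nabla_{\bar g}u|^2$ coming from differentiating the coefficient $\e^{-2u}$, and this cannot be absorbed unless $w\le \tfrac{2p-1}{2}$ is already known. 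In any case, uniform $L^q$-bounds on $u(t)$ for all finite $q$ are \emph{already} available from the Moser--Trudinger estimate \eqref{Uniformexp}, so the iteration would not add information; the whole difficulty is precisely the jump from $L^q$ to $L^\infty$, and that jump is blocked by the missing pointwise-in-$t$ control on $\partial_t u$.

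The paper circumvents this cleanly without any iteration. From the dissipation bound one picks, for each $T>0$, a ``good time'' $t_T\in[T,T+1]$ at which $F(t_T):=\int_M|\partial_t u(t_T)|^2\,d\mu_{g(t_T)}$ is bounded by the total dissipation, hence uniformly in $T$. At this single time the elliptic equation gives $\|\Delta_{\bar g}u(t_T)\|_{L^{3/2}(M,\bar g)}\le C$ (the $\partial_t u$-term is handled via $\|\e^{2u}\partial_t u\|_{L^{3/2}}\le\|\e^{u}\|_{L^6}F(t_T)^{1/2}$ and \eqref{Uniformexp}), whence $\|u(t_T)\|_{L^\infty(M,\bar g)}\le C$ by the embedding $W^{2,3/2}\hookrightarrow L^\infty$ and \eqref{UniBarU}. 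One then invokes the finite-time bound of \autoref{UpperULowerK} on $[t_T,t_T+3)\supset[T+1,T+2]$ with this uniform $L^\infty$-datum at $t_T$, which yields a bound on $[T+1,T+2]$ independent of $T$. The missing idea in your sketch is exactly this good-time selection followed by propagation via the already-proved short-time estimate.
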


\begin{proof}
We argue similarly as in the proof of \cite[Lemma 2.5]{Str20}.

By using the fact that $u(t)$ is a volume preserving solution of \eqref{HG1} with $u(0)=u_0\in \mathcal{C}_{p,A}$ and therefore $\int_M\e^{2u(t)}d\mu_{\bar g}\equiv A$, we get with \eqref{Jensen-consequence} and the fact that $\bar K<0$ that
\begin{equation}\label{LowerBoundEnergy}
\begin{split}
E_{f}(u(t))&=\frac12\|\nabla_{\bar g}u(t)\|^2_{L^2(M, \bar g)}+\int_M\bar Ku(t)d\mu_{\bar g}-\frac12\int_Mf\e^{2u(t)}d\mu_{\bar g}\\
&\ge\frac{\bar K}{2}\int_M2u(t)d\mu_{\bar g}-\frac12\int_Mf\e^{2u(t)}d\mu_{\bar g} \ge\frac{\bar K}{2}\log(A)-\frac{A}{2}\|f\|_{L^\infty(M,\bar g)}>-\infty.
\end{split}
\end{equation}
For the function
\begin{equation}
  \label{eq:def-F}
t \mapsto  F(t):=\int_M|\partial_tu(t)|^2d\mu_{g(t)}=\int_M|\partial_tu(t)|^2\e^{2u(t)}d\mu_{\bar g},
\end{equation}
we then obtain, by combining \eqref{LowerBoundEnergy} with \eqref{APrioriBound}, the estimate
  \begin{equation}
\label{UniformEstimate}
    \int_0^\infty F(t)dt=\lim_{T \to \infty} \int_0^T\int_M|\partial_tu(t)|^2d\mu_{g(t)}dt\le E_{f}(u_0)+\frac{|\bar K|}{2}|\log(A)|+\frac{A}{2}\|f\|_{L^\infty(M,\bar g)}.
\end{equation}
Hence, for any $T>0$ we find $t_T\in[T,T+1]$ such that
\begin{align}
  F(t_T)=\inf_{t\in(T,T+1)}F(t)&\le E_{f}(u_0)+\frac{|\bar K|}{2}|\log(A)|+\frac{A}{2}\|f\|_{L^\infty(M,\bar g)} \nonumber\\
                               &\le \frac12 \|\nabla u_0\|_{L^2(M,\bar g)}^2 + |\bar K| \bigl(\frac{1}{2}|\log(A)|+ \|u\|_{L^1(M,\bar g)}\bigr) + A \|f\|_{L^\infty(M,\bar g)} \nonumber\\
  &= d_1 + d_2 \|f\|_{L^\infty(M,\bar g)} \label{EstimateFtT}
\end{align}
with constants $d_i = d_i(u_0)>0$. Here we used \eqref{eq:C-A-energy-ineq}.

So, at time $t_T$ we get with \eqref{PCFVol1}, H\"olders inequality, Young's inequality, \eqref{Uniformexp}, and \eqref{EstimateFtT} that
\begin{equation}\label{DeltaUniform}
\begin{split}
\|\Delta_{\bar g}&u(t_T)\|_{L^{\frac32}(M,\bar g)}\\
&\le\|\e^{2u(t_T)}\partial_tu(t_T)\|_{L^{\frac32}(M,\bar g)}+\|\bar K\|_{L^{\frac32}(M,\bar g)}+\|\e^{2u(t_T)}f\|_{L^{\frac32}(M,\bar g)}+\|\e^{2u(t_T)}\alpha(t_T)\|_{L^{\frac32}(M,\bar g)}\\
&\le\|\e^{u(t_T)}\|_{L^6(M,\bar g)}F(t_T)^{\frac12}+|\bar K|+
\|f\|_{L^\infty(M,\bar g)} \Bigl(\int_M\e^{3u(t_T)}d\mu_{\bar g}\Bigr)^{\frac23}+ |\alpha(t_T)| \Bigl(\int_M\e^{3u(t_T)} d\mu_{\bar g}\Bigr)^{\frac23}\\
&\le \frac12 \|\e^{u(t_T)}\|_{L^6(M,\bar g)}^2  + \frac12  F(t_T) +|\bar K|+ \frac{1}{3}\Bigl( \|f\|_{L^\infty(M,\bar g)}^3 + |\alpha(t_T)|^3\Bigr)+\frac{4}{3} \int_M\e^{3u(t_T)} d\mu_{\bar g}\\
&\le \frac12 \Bigl(\nu_0(u_0,6) \e^{\nu_1(u_0,6) \|f\|_{L^\infty(M,\bar g)}}\Bigr)^{1/3}
+ \frac12 \Bigl(d_1 + d_2 \|f\|_{L^\infty(M,\bar g)}\Bigr)  +|\bar K|\\
&+ \frac{1}{3}\Bigl( \|f\|_{L^\infty(M,\bar g)}^3 + |\alpha(t_T)|^3\Bigr)+\frac{4}{3} \nu_0(u_0,3) \e^{\nu_1(u_0,3) \|f\|_{L^\infty(M,\bar g)}}\\
&\le d_3  \e^{d_4 \|f\|_{L^\infty(M,\bar g)}}
\end{split}
\end{equation}
with constants $d_i=d_i(u_0)$, $i\in\{3,4\}$. Here the constants $\nu_i(u_0,3)$, $i\in\{0,1\}$ are given in \eqref{Uniformexp}.

Furthermore, with Sobolev's embedding theorem we have $W^{2,\frac{3}{2}}(M)\subset C^{0,\frac23} \subset L^\infty(M,\bar g)$. Therefore we get with Poincar\'e's inequality, the Calder\'on--Zygmund inequality for closed surfaces, and with
\eqref{DeltaUniform} that
\begin{equation}
\begin{split}
\|u(t_T)-\bar u(t_T)\|^{\frac32}_{L^\infty(M,\bar g)}&\le d_5\|u(t_T)-\bar u(t_T)\|^{\frac32}_{W^{2,\frac32}(M,\bar g)}\le d_6 \|\nabla_{\bar g}^2u(t_T)\|^{\frac32}_{L^{\frac32}(M,\bar g)}\\
&\le d_7 \|\Delta_{\bar g}u(t_T)\|^{\frac32}_{L^{\frac32}(M,\bar g)}\le d_8 \e^{d_9 \|f\|_{L^\infty(M,\bar g)}}.
\end{split}
\end{equation}
with constants $d_i>0$, $i\in\{5,6,7\}$ and $d_i=d_i(u_0)>0$, $i\in\{8,9\}$. With \eqref{UniBarU} we therefore obtain the uniform bound
\begin{equation}
\|u(t_T)\|_{L^\infty(M,\bar g)}\le d_8 \e^{d_9 \|f\|_{L^\infty(M,\bar g)}} +\max\left\{|m_0|+ m_1 \|f\|_{L^{\infty}(M)},\frac12|\log(A)|\right\}.
\end{equation}
Upon shifting time by $t_T$, we therefore get from \autoref{UpperULowerK} 
\begin{equation}
\begin{split}
\sup_{s\in[T+1,T+2]}&\|u(s)\|_{L^\infty(M,\bar g)} \le \sup_{s\in[t_T,t_T+3)}\|u(s)\|_{L^\infty(M,\bar g)}
\le \mathcal{M}(\|u(t_T)\|_{L^\infty(M,\bar g)},\|f\|_{L^\infty(M,\bar g)},3)\\
&\le \mathcal{M}\Bigl(d_8 \e^{d_9 \|f\|_{L^\infty(M,\bar g)}} +\max\left\{|m_0|+ m_1 \|f\|_{L^{\infty}(M,\bar g)},\frac12|\log(A)|\right\},\|f\|_{L^\infty(M,\bar g)},3\Bigr)\\
&=:\mathcal{N}(u_0,\|f\|_{L^\infty(M,\bar g)}).
\end{split}
\end{equation}
Since $\mathcal{M}$ is increasing in its first and second variables by \autoref{UpperULowerK}, we see that $\mathcal{N}$ is increasing in $\|f\|_{L^\infty(M,\bar g)}$, as claimed.
Since $T>0$ was arbitrary, the claim follows.
\end{proof}

\subsection{Convergence of the Flow}\label{SectionConvergenceFlow}
Let $f \in C^\infty(M)$, $A>0$, $p>2$ and $u_0  \in \mathcal{C}_{p,A}$ as before, and let $u$ denote the global, smooth solution of the initial value problem \eqref{HG1}, \eqref{HG2}. In this section we shall show that 
for a suitable sequence $t_l\to\infty$, $l\to\infty$, the associated sequence of metrics $g(t_l)$ tends to a limit metric $g_\infty=\e^{2u_\infty}\bar g$ with Gauss curvature $K_{g_\infty}$, which then implies that $K_{g_\infty}=f-\alpha^\infty$ with a constant $\alpha^\infty$. Afterwards, we shall have a closer look at this constant $\alpha^\infty$.

By
\eqref{UniformEstimate}, we know that, for a suitable sequence $t_l\to\infty$, $l\to\infty$ we have
\begin{equation}\label{Conv1}
\int_M|\partial_tu(t_l)|^2d\mu_{g(t_l)}=\int_M|f-K_{g_l}-\alpha(t_l)|^2d\mu_{g(t_l)}\to0\quad\text{for }l\to\infty.
\end{equation}

We can strengthen this observation as follows. 
\begin{lemma}\label{KonvergenzF}
For $F(t)=\int_M|\partial_tu(t)|^2d\mu_{g(t)}$ as above, we have $F(t)\to0$ for $t\to\infty$.
\end{lemma}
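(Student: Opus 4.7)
The plan is to leverage the integrability $\int_0^\infty F(t)\,dt<\infty$ already established in \eqref{UniformEstimate}, together with a uniform upper bound on $F'(t)$: this combination forces $F(t)\to 0$ by a standard argument. Indeed, were there $\varepsilon>0$ and times $t_n\to\infty$ with $F(t_n)\ge\varepsilon$, a uniform bound $F'\le C$ would imply $F(s)\ge\varepsilon/2$ on $[t_n-\varepsilon/(2C),t_n]$, and the disjointness of these intervals (after passing to a subsequence) would contradict $F\in L^1(0,\infty)$. Hence it remains only to verify that $F'$ is bounded from above.

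To do so, I first upgrade the uniform $L^\infty$-bound from \autoref{Linfty} to higher-order parabolic regularity. Rewritten as
\[
\partial_t u - e^{-2u}\Delta_{\bar g}u = e^{-2u}|\bar K|+ f - \alpha(t),
\]
equation \eqref{HG1} is, thanks to the $L^\infty$-bound on $u$ and the bound \eqref{Lower-Upper-Boundtildealpha} on $\alpha$, uniformly parabolic with smooth bounded coefficients and bounded forcing. Applying standard Schauder theory on time-shifted cylinders $[t,t+1]\times M$ with $t\ge 1$ then yields
\[
\sup_{t\ge 1}\|u(t)\|_{C^k(M,\bar g)}<\infty\qquad\text{for every }k\in\N,
\]
with constants depending only on $u_0$ and $\|f\|_{L^\infty(M,\bar g)}$; in particular, $\phi:=\partial_t u$ is uniformly bounded in $L^\infty(M,\bar g)$ on $[1,\infty)$.

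Next I compute $F'(t)$. Using $\partial_t(\phi^2e^{2u})=2\phi\,\partial_t\phi\,e^{2u}+2\phi^3e^{2u}$, differentiating $\phi=f-K_{g(t)}-\alpha(t)$ once more in time via
\[
\partial_t K_{g(t)}=-2\phi K_{g(t)}-e^{-2u}\Delta_{\bar g}\phi,
\]
and performing an integration by parts on the closed surface $M$---noting that the $\dot\alpha(t)\int_M\phi\,d\mu_{g(t)}$ contribution vanishes thanks to the volume-preservation identity $\int_M\phi\,d\mu_{g(t)}=0$ (a consequence of Gauss--Bonnet $\int_M K_{g(t)}d\mu_{g(t)}=\bar K$ combined with the definition of $\alpha$)---I arrive at
\[
F'(t)=4\int_M\phi^2(f-\alpha(t))e^{2u}\,d\mu_{\bar g}-2\int_M\phi^3 e^{2u}\,d\mu_{\bar g}-2\int_M|\nabla_{\bar g}\phi|^2_{\bar g}\,d\mu_{\bar g}.
\]
The last term is non-positive, the first is bounded by $4(\|f\|_{L^\infty(M,\bar g)}+\alpha_0)F(t)$ by \eqref{Lower-Upper-Boundtildealpha}, and the cubic term is bounded by $2\|\phi\|_{L^\infty(M,\bar g)}F(t)$, hence by $CF(t)$ for $t\ge 1$ by the regularity step. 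Since $F(t)\le A\|\phi(t)\|_{L^\infty(M,\bar g)}^2$ is itself uniformly bounded on $[1,\infty)$, this yields $F'(t)\le C$ uniformly on $[1,\infty)$, completing the proof via the first paragraph.

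The main obstacle is justifying the uniform higher regularity on $[1,\infty)\times M$; the crucial point that makes it go through is that the $L^\infty$-bound on $u$ renders the equation uniformly parabolic with time-translation-invariant structure, so local parabolic Schauder estimates transport uniformly in $t$. Given this, the rest is bookkeeping: the cubic term $\int_M\phi^3 e^{2u}$ is the only nonlinearity in $F'$ that is not immediately controlled by $F$ itself, and uniform $L^\infty$ control of $\phi$ is exactly what is needed to tame it.
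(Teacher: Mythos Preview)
Your argument is correct, but it takes a genuinely different route from the paper.

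The paper avoids invoking higher-order parabolic regularity altogether: after deriving the same identity for $F'(t)$, it controls the cubic term $\int_M(\partial_t u)^3\,d\mu_{g(t)}$ via the Gagliardo--Nirenberg--Lady\v{z}henskaya inequality (\autoref{eindSob} with $r=3$), obtaining
\[
-\int_M(\partial_t u)^3\,d\mu_{g(t)} \;\le\; CF(t)^2 + \tfrac{1}{2}\bigl(F(t)+G(t)\bigr),
\]
where $G(t)=\|\nabla_{\bar g}\partial_t u(t)\|_{L^2(M,\bar g)}^2$ is then absorbed into the good gradient term. This yields the differential inequality $F'\le \tilde C_1 F + \tilde C_2 F^2$, and a Gr\"onwall-type absorption using $\int_{t_l}^\infty F\,dt\to 0$ finishes the proof. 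Your approach instead spends the regularity machinery (Krylov--Safonov then Schauder, bootstrapped on translated unit cylinders) to obtain a uniform bound on $\|\partial_t u\|_{L^\infty}$, after which the cubic term is trivially $\le CF(t)$ and the ODE step is immediate. What the paper's route buys is self-containment: it uses only the $L^\infty$-bound on $u$ and an interpolation inequality already stated in the text. What your route buys is a cleaner endgame, at the cost of importing interior parabolic regularity.

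One minor point worth tightening: the phrase ``uniformly parabolic with smooth bounded coefficients'' elides a step. The uniform $L^\infty$-bound on $u$ gives only $L^\infty$-control of the coefficient $e^{-2u}$, not uniform H\"older control, so Schauder theory does not apply directly. You first need a Krylov--Safonov (or analogous) estimate to obtain uniform $C^\alpha$-regularity of $u$ on interior cylinders, and only then can the Schauder bootstrap begin. This is standard, but should be made explicit.
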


\begin{proof}
First we consider the evolution equation of the curvature $K_{g(t)}$ and of $\alpha(t)$. By the Gauss equation \eqref{GaussEquation} and \eqref{HG1} we have
\begin{equation}\label{EvolutionCurvature}
\begin{split}
\partial_tK_{g(t)}&=\partial_t(-\e^{-2u(t)}\Delta_{\bar g}u(t)+\e^{-2u(t)}\bar K)\\
&=-2\partial_tu(t)K_{g(t)}-\Delta_{g(t)}\partial_tu(t)\\
&=2K_{g(t)}(K_{g(t)}-f+\alpha(t))+\Delta_{g(t)}(K_{g(t)}-f+\alpha(t))\\
&=2(K_{g(t)}-f+\alpha(t))^2+2(f-\alpha(t))(K_{g(t)}-f+\alpha(t))+\Delta_{g(t)}(K_{g(t)}-f+\alpha(t))\\
&=2(\partial_t u(t))^2-2(f-\alpha(t))\partial_t u(t)- \Delta_{g(t)}\partial_t u(t)
\end{split}
\end{equation}
for $t>0$. Moreover, by \eqref{eq:definition-alpha} we have 
\begin{equation}\label{EvolutionBeta}
\frac{d}{dt}\alpha(t)=\frac2A\int_Mf\e^{2u(t)}\partial_tu(t)d\mu_{\bar g}=\frac2A\int_Mf \partial_tu(t)d\mu_{g(t)}.
\end{equation}
Combining \eqref{HG1}, \eqref{EvolutionCurvature} and \eqref{EvolutionBeta}, we arrive at
\begin{equation}\label{Combined}
\begin{split}
&\partial_{tt}u(t) = \partial_t \bigl(f - K_{g(t)}-\alpha(t)\bigr)\\
&=-2(\partial_t u(t))^2+2(f-\alpha(t))\partial_t u(t)+ \Delta_{g(t)}\partial_t u(t)+\frac2A\int_Mf \partial_tu(t)d\mu_{g(t)}.
\end{split}
\end{equation}
We therefore get, using \eqref{Lower-Upper-Boundtildealpha}, that
\begin{equation}\label{EvolutionF1}
\begin{split}
  \frac12&\frac{d}{dt}F(t)= \frac12 \frac{d}{dt}\int_{M}|\partial_t u(t)|^2 \e^{2u(t)}d \mu_{\bar g}=  \int_{M}\bigl(\partial_t u(t) \partial_{tt}u(t) + |\partial_t u(t)|^2\partial_t u(t) \bigr)d \mu_{g(t)} \\
  &=\int_{M}\Bigl(- (\partial_t u(t))^3 + 2\bigl(f-\alpha(t)\bigr)(\partial_t u(t))^2
+ \partial_t u(t) \Delta_{g(t)}\partial_t u(t)\Bigr) d \mu_{g(t)} \\  
&\le - \int_{M} (\partial_t u(t))^3d \mu_{g(t)}  + 2\bigl(\|f\|_{L^\infty(M,\bar g)}+\alpha_0\bigr)F(t)-G(t)
\end{split}
\end{equation}
with
\[
  G(t):=\int_M |\nabla_{g(t)}\partial_tu(t)|^2_{g(t)}d\mu_{g(t)} \qquad \text{for $t >0$.}
  \]
With \autoref{eindSob}, applied with $r=3$, $C_{\text{GNL}}= C_{\text{GNL}}(3)>0$, \eqref{HG1} and \autoref{Linfty} we can furthermore estimate
\begin{equation}\label{Estimate2}
\begin{split}
  - \int_{M} &(\partial_t u(t))^3d \mu_{g(t)} \le \int_{M} |\partial_tu(t)|^3 \e^{2u(t)} d\mu_{\bar g} 
  \le \e^{2\mathcal{N}} \|\partial_tu(t)\|^3_{L^3(M,\bar g)}\\
&\le   \e^{2\mathcal{N}} C_{\text{GNL}}
  \|\partial_tu(t)\|_{L^2(M,\bar g)}^2 \|\partial_tu(t)\|_{H^1(M,\bar g)}\\
  &=   \e^{2\mathcal{N}} C_{\text{GNL}} \int_M|\partial_tu(t)|^2\e^{-2u(t)}d\mu_{g(t)} 
  \left(\int_M|\partial_tu(t)|^2\e^{-2u(t)}d\mu_{g(t)} + \int_M|\nabla_{g(t)} \partial_tu(t)|^2d\mu_{g(t)} \right)^{\frac12}\\
&\le  \e^{6\mathcal{N}} C_{\text{GNL}} \int_M|\partial_tu(t)|^2 d\mu_{g(t)} 
  \left(\int_M|\partial_tu(t)|^2d\mu_{g(t)} + \int_M|\nabla_{g(t)} \partial_tu(t)|^2d\mu_{g(t)} \right)^{\frac12}\\
&=  \e^{6\mathcal{N}} C_{\text{GNL}} F(t)
\Bigl(F(t) + G(t) \Bigr)^{\frac12} \le \frac{\bigl(\e^{6\mathcal{N}} C_{\text{GNL}}\bigr)^2}{2}F^2(t) + \frac{1}{2}
\Bigl(F(t) + G(t) \Bigr),
\end{split}
\end{equation}
where we used Young's inequality and the fact that
\[
  G(t) =\int_M |\nabla_{g(t)}\partial_tu(t)|^2_{g(t)}d\mu_{g(t)}=
  \int_M|\nabla_{\bar g}\partial_tu(t)|^2_{\bar g}d\mu_{\bar g} \qquad \text{for $t >0$.}
  \]
Combining \eqref{EvolutionF1} and \eqref{Estimate2} and using that $G(t) \ge 0$ gives 
\begin{equation}\label{EvolutionF2}
\begin{split}
  \frac{d}{dt}F(t) \le \frac{d}{dt}F(t)+G(t)&\le \bigl(\e^{6\mathcal{N}} C_{\text{GNL}}\bigr)^2 F^2(t)+ \bigl(4(\|f\|_{L^\infty(M,\bar g)}+\alpha_0)+1\bigr) F(t)\\
&=:\tilde C_1F(t)+\tilde C_2 F^2(t).
\end{split}
\end{equation}
By integrating \eqref{EvolutionF2} over $(t_l,t)\subset(t_l,T)$ and taking the supremum over $t \in (t_l,T)$ we get\begin{align*}
\sup_{t\in(t_l,T)}F(t)&\le F(t_l)+\tilde C_1\int_{t_l}^TF(t)dt+\tilde C_2\int_{t_l}^TF^2(t)dt\\
&\le F(t_l)+\tilde C_1\int_{t_l}^\infty F(t)dt+ \tilde C_2\sup_{t\in(t_l,T)}F(t)\int_{t_l}^\infty F(t)dt
\end{align*}
With \eqref{UniformEstimate} we also have $\int_{t_l}^\infty F(t)dt\to0$ for $l\to\infty$ and thus $1-\tilde C_2\int_{t_l}^\infty F(t) dt>0$ for $l$ sufficiently large. For these $l$ and $T > t_l$ we thus have 
\[\sup_{t\in(t_l,T)}F(t)\le\frac{1}{\left(1-\tilde C_2\int_{t_l}^\infty F(t) dt\right)} \left(F(t_l)+\tilde C_1 \int_{t_l}^\infty F(t) dt\right).\]
Letting $T\to\infty$ yields
\[\sup_{t\in(t_l,\infty)}F(t)\le\frac{1}{\left(1-\tilde C_2\int_{t_l}^\infty F(t) dt\right)} \left(F(t_l)+\tilde C_1\int_{t_l}^\infty F(t) dt\right)\to0\quad\text{as }l\to \infty\]
which shows the claim.
\end{proof}

To prove now the convergence of the flow, we first note $u(t)$ is uniformly (in $t \in (0,\infty)$) bounded in $H^1(M,\bar g)$ by \autoref{Properties1}5. and \autoref{KonvergenzF}. We now consider a sequence $t_l\to\infty$, $l\to\infty$ and the associated sequence of functions $u_l:=u(t_l)$. This sequence is bounded in $H^1(M,\bar g)$, hence there exists a subsequence, again denoted by $(u_l)_l$, with $u_l\to u_\infty$ weakly in $H^1(M,\bar g)$ and therefore strongly  in $L^2(M,\bar g)$. Furthermore with \eqref{Lower-Upper-Boundtildealpha} we know that $\alpha_l:=\alpha(t_l)\to \alpha_\infty$ as $l\to\infty$ after passing again to a subsequence.
Moreover we claim that $\e^{\pm u_l}\to \e^{\pm u_\infty}$ (as $l\to\infty$) in $L^p(M,\bar g)$ for any $2\le p<\infty$. Indeed, using \autoref{Linfty} and the elementary estimate
\begin{equation}
  \label{EstimateExp}
|1-\e^x| \le |x| \e^{|x|} \qquad \text{for $x \in \R$,}  
\end{equation}
 we find that  
\begin{align*}
&\|\e^{u_l}-\e^{u_\infty}\|^p_{L^p(M,\bar g)}=\int_M\e^{pu_l}|1-\e^{u_\infty-u_l}|^pd\mu_{\bar g}\le \e^{p\mathcal{N}}\int_M|1-\e^{u_\infty-u_l}|^pd\mu_{\bar g}\\
&\le \e^{p\mathcal{N}}\int_M|u_\infty-u_l|^p\e^{p|u_\infty-u_l|}|d\mu_{\bar g} \le\e^{p\mathcal{N}}\e^{2p\mathcal{N}}\int_M|u_\infty-u_l|^{p-2}|u_\infty-u_l|^2d\mu_{\bar g}\\
&\le \e^{3p\mathcal{N}}(2\mathcal{N})^{p-2}\|u_\infty-u_l\|^2_{L^2(M,\bar g)}\to 0\quad\text{as }l\to \infty.
\end{align*}
Replacing $u_l$ by $-u_l$ we get also $\e^{-u_l}\to \e^{-u_\infty}$ in $L^p(M,\bar g)$ as $l\to\infty$ for any $p<\infty$.
Furthermore, we have
\begin{align*}
\|\e^{2u_l}\alpha_l-\e^{2u_\infty}\alpha_\infty\|_{L^2(M,\bar g)}&\le\|\e^{2u_l}(\alpha_l-\alpha_\infty)\|_{L^2(M,\bar g)}+\|\alpha_\infty(\e^{2u_l}-\e^{2u_\infty})\|_{L^2(M,\bar g)}\\
&\le\|\e^{2u_l}\|_{L^\infty(M,\bar g)}|\alpha_l-\alpha_\infty|A^{\frac12}+|\alpha_\infty|\|\e^{2u_l}-\e^{2u_\infty}\|_{L^2(M,\bar g)}\\
&\to 0\quad\text{for }l\to \infty.
\end{align*}
Since moreover $\e^{2u_l}\partial_tu_l\to0$ in $L^2(M,\bar g)$ as $l\to\infty$  with \autoref{Linfty} and \autoref{KonvergenzF}, the evolution equation \eqref{HG1} yields 
\begin{equation*}
\Delta_{\bar g}u_l=\e^{2u_l}\partial_tu_l+\bar K-\e^{2u_l}f+\e^{2u_l}\alpha_l \;\to\; \bar K-\e^{2u_\infty}f+\e^{2u_\infty}\alpha_\infty \qquad \text{in $L^2(M,\bar g)$.}
\end{equation*}
Since the Laplace operator $\Delta_{\bar g}$ is closed in $L^2(M,\bar g)$ with domain $H^2(M,\bar g)$, we deduce that $u_\infty$ in $H^2(M,\bar g)$ with
\begin{equation}
  \label{eq:limiting-Laplacian-equation}
\Delta_{\bar g}u_\infty = \bar K-\e^{2u_\infty}f+\e^{2u_\infty}\alpha_\infty
\end{equation}
and thus   
\[\|\Delta_{\bar g}(u_l-u_\infty)\|_{L^2(M,\bar g)}\to0\quad \text{as}\quad l\to\infty.\]
So, we even have strong convergence $u_l\to u_\infty$ in $H^2(M,\bar g)$ and uniformly, which implies that $u_\infty \in \mathcal{C}_A$ and therefore
$$
\alpha_\infty = \frac{1}{A}\left(\int_M f d\mu_{g_\infty} -\bar K\right)
$$
by integrating \eqref{eq:limiting-Laplacian-equation} over $M$.
Consequently, for the Gauss curvature $K_{g_\infty}$ of the limit metric $g_\infty=\e^{2u_\infty}\bar g$ we get from \eqref{GaussEquation} and \eqref{eq:limiting-Laplacian-equation} that
\[K_{g_\infty}=\e^{-2u_\infty}\bigl(-\Delta_{\bar g}u_\infty + \bar K\bigr)= f-\alpha_\infty=f+ \frac{1}{A}\left(\bar K-\int_Mfd\mu_{g_\infty}\right)\]
which shows the convergence of the flow.

 \subsection{\texorpdfstring{The Sign of the Constant $\alpha_\infty$}{The Sign of the Constant}} 
In this subsection we complete the proofs of \autoref{sign-changing} and \autoref{sec:stat-minim-probl-1-cor-2-theorem}. For this we  show, under certain assumptions, that the expression
\[
  \lambda =   \frac{1}{A}\left(\bar K-\int_Mfd\mu_{g_\infty}
\right)\]
is positive. The proof of \autoref{sec:stat-minim-probl-1-cor-2-theorem} is already completed by the statement of \autoref{sec:stat-minim-probl-1-cor-2}. So we can turn to \autoref{sign-changing}.

\begin{proof}[Proof of \autoref{sign-changing} (completed)]
  We have seen in \autoref{Linfty} that in the case where $u_0 \equiv \frac{1}{2}\log(A) \in \mathcal{C}_{p,A}$, the uniform $L^\infty$-bound on the global solution of the initial value problem \eqref{HG1}, \eqref{HG2} only depends on $A$ and an upper bound on $\|f\|_{L^\infty(M,\bar g)}$. In other words, if $A>0$ and $c>0$ are fixed, then there exists $\tau>0$ with the property that
\[
  \sup_{t>0}\|u(t)\|_{L^\infty(M,\bar g)}\le \tau
\]
for every $f \in C^\infty(M)$ with $\|f\|_{L^\infty(M,\bar g)} \le c$ and the corresponding solution $u$ of the initial value problem \eqref{HG1}, \eqref{HG2} with $u_0 \equiv \frac{1}{2}\log(A) \in \mathcal{C}_{p,A}$. Consequently, 
we also have $\|u_\infty\|_{L^\infty(M,\bar g)} \le \tau$ under the current assumptions on $f$, which implies that 
\begin{align*}
  \lambda&=\frac1A\left(\bar K- \int_M f \e^{2u_\infty}d\mu_{\bar g}\right)= \frac1A\left(\bar K +cA - \int_M (f+c) \e^{2u_\infty}d\mu_{\bar g}\right)\\
  &\ge c + \frac{\bar K}{A} - \|f+c\|_{L^1(M,\bar g)}\|\e^{2u_\infty}\|_{L^\infty(M,\bar g)} \ge c + \frac{\bar K}{A} - \|f+c\|_{L^1(M,\bar g)}\e^{2\tau}.     
\end{align*}
Hence, if $\|f+c\|_{L^1(M,\bar g)} <\varepsilon:=\frac{c + \frac{\bar K}{A}}{\e^{2\tau}}$,
we have $\lambda>0$. 
\end{proof}

\section{Appendix}
In this section, we collect some helpful estimates and well-posedness results for a class of linear second order parabolic equations in non-divergence form with continuous second order coefficient. Most of these results should be known to experts but seem hard to find in the required form in the literature. 

As before, let $(M,\bar g)$ be a two-dimensional, smooth, closed, connected, oriented 
Riemann manifold endowed with a smooth background metric $\bar g$. 
For a domain $\Omega \subset \R\times M$  and $p \ge 1$, we let $W^{2,1}_p(\Omega)$
denote the space of functions $u \in L^p(\Omega)$ which have weak derivatives $Du$, $D^2u$ and $\partial_tu$ in $L^p(\Omega)$. In the following, we fix $p>2$, and we recall the following embedding, see e.g.~\cite[Lemma 3.3]{LadSolUra68}.

\begin{lemma}
  \label{W12p-embedding}
 If the domain $\Omega \subset \R\times M$ is bounded, then $W^{2,1}_p(\Omega)$ is continuously embedded in $C^\alpha(\overline \Omega)$ for some $\alpha = \alpha(p)>0$ and therefore compactly embedded in $C(\overline \Omega)$. 
\end{lemma}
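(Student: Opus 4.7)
The plan is to reduce the statement to the classical parabolic Sobolev embedding in Euclidean space via a finite atlas on $M$, and then deduce compactness from Arzelà--Ascoli. Since $\Omega$ is bounded and $\R\times M$ is $\sigma$-compact with $M$ compact, the closure $\overline\Omega$ is compact. I would cover $\overline\Omega$ by finitely many parabolic cylinders of the form $I_i\times U_i$, where $I_i\subset\R$ is a bounded open interval and $U_i\subset M$ is the domain of a smooth coordinate chart $\varphi_i\colon U_i\to V_i\subset\R^2$, and fix a subordinate partition of unity. Since the metric $\bar g$ is smooth and $U_i$ is relatively compact, the coefficients of $\Delta_{\bar g}$ and the volume density in the chart are bounded above and below, so the norms of $W^{2,1}_p(I_i\times U_i)$ and of $W^{2,1}_p(I_i\times V_i)$ (the standard Euclidean parabolic Sobolev space) are equivalent under pullback.

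Next, I would invoke the standard Euclidean embedding \cite[Chapter II, Lemma 3.3]{LadSolUra68}: for a bounded parabolic domain in $\R\times\R^n$ with $n=2$ and $p>\tfrac{n+2}{2}=2$, the space $W^{2,1}_p$ embeds continuously into the parabolic Hölder space $C^{\alpha,\alpha/2}$ with exponent $\alpha=2-\tfrac{n+2}{p}=2-\tfrac{4}{p}>0$. Combining this with the partition of unity (noting that multiplication by a smooth cutoff preserves $W^{2,1}_p$) one obtains a global estimate
\[
\|u\|_{C^\alpha(\overline\Omega)}\le C\|u\|_{W^{2,1}_p(\Omega)},
\]
where $C^\alpha(\overline\Omega)$ is understood with respect to the natural parabolic distance $d((t,x),(s,y))=(|t-s|+d_{\bar g}(x,y)^2)^{1/2}$ (which is comparable to any fixed Riemannian-style distance on the compact $\overline\Omega$). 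This gives the continuous embedding.

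Finally, the compact embedding into $C(\overline\Omega)$ is immediate from Arzelà--Ascoli: any bounded sequence in $W^{2,1}_p(\Omega)$ is, by the previous step, bounded and uniformly equicontinuous in $C(\overline\Omega)$, hence has a uniformly convergent subsequence.

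The only nontrivial point is the first one, namely verifying that the local Euclidean estimates of \cite{LadSolUra68} piece together on the manifold; this is routine once one has the partition of unity and the observation that in each chart the operator $\partial_t-\Delta_{\bar g}$ has the form of a standard parabolic operator with smooth, bounded, uniformly elliptic spatial part. The result itself is classical, and no genuine obstacle is expected.
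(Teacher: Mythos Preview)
Your proposal is correct. The paper does not actually give a proof of this lemma; it simply records the statement with a reference to \cite[Lemma~3.3]{LadSolUra68}. Your argument supplies precisely the standard reduction that justifies citing that Euclidean result in the manifold setting: localize via a finite atlas and partition of unity, use equivalence of the $W^{2,1}_p$-norms under pullback by smooth charts with bounded geometry, apply the Ladyzhenskaya--Solonnikov--Ural'ceva embedding in each coordinate patch (with the correct exponent $\alpha = 2 - 4/p$ for spatial dimension $n=2$), and conclude compactness into $C(\overline\Omega)$ by Arzel\`a--Ascoli. There is nothing to correct.
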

We consider the linear parabolic problem
\begin{equation}
  \label{eq:linear-parabolic}
\partial_tu(t,x) = a(t,x)\Delta_{\bar g} u(t,x) + c(t,x)u(t,x) +d(t,x),    
\end{equation}
with  $a,c,d \in C(\overline \Omega)$ and $d \in L^p(\Omega)$. We say that a function $u \in W^{2,1}_p(\Omega)$ is a (strong) solution of \eqref{eq:linear-parabolic} in $\Omega$ if \eqref{eq:linear-parabolic} holds almost everywhere in $\Omega$. Specifically, we consider \eqref{eq:linear-parabolic} on the cylindrical domains $\Omega_T= (0,T)\times M$ and $\widetilde \Omega_T= (-\infty,T)\times M$ in the following.

In particular, we consider strong solutions of \eqref{eq:linear-parabolic} together with the initial condition
\begin{equation}
  \label{eq:linear-parabolic-initial}
u(0,x)=u_0(x) \qquad \text{in $M$}
\end{equation}
with $u_0 \in W^{2,p}(M,\bar g)$, which is supposed to hold in the (initial) trace sense.

\begin{proposition}
\label{sec:appendix}
Let $T>0$, $a,c \in C(\overline\Omega_T)$ with $a_T:= \min \limits_{(t,x)\in\overline\Omega_T}a(t,x) >0$, let $d \in L^p(\Omega_T)$ for some $p>2$, and let $u_0 \in W^{2,p}(M,\bar g)$. 

Then the initial value problem \eqref{eq:linear-parabolic}, \eqref{eq:linear-parabolic-initial} has a unique strong solution $u \in W^{2,1}_p(\Omega_T)$. Moreover, $u$ satisfies the estimate
\begin{equation}
  \label{eq:W2-1-p-a-priori}
\|u\|_{W^{2,1}_p(\Omega_T)} \le C \Bigl( \|u_0\|_{W^{2,p}(M,\bar g)} + \|d\|_{L^p(\Omega_T)}\Bigr)
\end{equation}
with a constant $C>0$ depending only on $\|a\|_{L^\infty(\Omega_T)}$, $\|c\|_{L^\infty(\Omega_T)}$ and $a_T$. Moreover, $C$ does not increase after making $T$ smaller.\\ 
If, moreover, $a,c,d \in C^\alpha(\Omega_T)$ for some $\alpha>0$, then $u \in C(\overline \Omega_T) \cap C^{2,1}(\Omega_T)$ is a classical solution of \eqref{eq:linear-parabolic}, \eqref{eq:linear-parabolic-initial}, and we have the inequality
\begin{equation}
  \label{eq:H-1-inequality-appendix}
\|u_0\|_{H^1(M,\bar g)} \ge \limsup_{t \to 0^+}\|u(t)\|_{H^1(M,\bar g)}  
\end{equation}
\end{proposition}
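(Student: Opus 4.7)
The plan is to reduce all four claims to classical parabolic $L^p$ and Schauder theory in Euclidean cylinders via a finite coordinate atlas on the compact manifold $M$, and to deduce the $H^1$ boundary behaviour at $t=0$ from a standard parabolic trace embedding. I will organise the argument in three stages corresponding to the a priori estimate / well-posedness, the Schauder bootstrap, and the $H^1$ trace.

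For the first assertion, I will fix a finite smooth atlas of $M$ with a subordinate partition of unity $\{\chi_j\}$. If $u \in W^{2,1}_p(\Omega_T)$ satisfies \eqref{eq:linear-parabolic}, then each localised function $\chi_j u$ solves a parabolic equation on a Euclidean cylinder with continuous principal coefficient (the local representation of $a$) and an inhomogeneity involving $cu$, $d$, a commutator $[\Delta_{\bar g},\chi_j]u$ and a first-order operator in $u$. The classical Euclidean parabolic Calder\'on--Zygmund estimate with continuous principal coefficient (see e.g. \cite{LadSolUra68} or \cite{Lie96}) yields a local $W^{2,1}_p$ bound in each chart; summing over the finitely many $j$ and absorbing the lower-order term $\|u\|_{L^p(\Omega_T)}$ by a Gronwall-type argument for $t \mapsto \|u(t,\cdot)\|_{L^p(M,\bar g)}$ gives \eqref{eq:W2-1-p-a-priori}. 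The resulting constant depends on $a_T$, $\|a\|_{L^\infty}$, $\|c\|_{L^\infty}$, the fixed geometry of $(M,\bar g)$, and the modulus of continuity of $a$; the $T$-monotonicity of $C$ follows because restricting a solution on $\Omega_T$ to a subcylinder $\Omega_{T'}$ with $T'<T$ leaves the initial-data term unchanged and only decreases the $L^p$-norms on the right-hand side. Existence I will obtain by approximating $a,c$ by smooth coefficients $a_n,c_n$ with the same uniform bounds, solving the resulting smooth problems by the standard heat semigroup method, and passing to the limit with the uniform $W^{2,1}_p$ bound and weak compactness. Uniqueness is immediate by applying \eqref{eq:W2-1-p-a-priori} to the difference of two solutions, which has vanishing initial datum and vanishing source.

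For the second assertion, given the strong solution $u \in W^{2,1}_p(\Omega_T)$, \autoref{W12p-embedding} yields $u \in C^\beta(\overline \Omega_T)$ for some $\beta>0$. Under the additional assumption $a,c,d \in C^\alpha(\Omega_T)$, the right-hand side $cu+d$ of $\partial_t u - a \Delta_{\bar g} u = cu + d$ is then locally H\"older continuous, and a bootstrap application of the parabolic Schauder estimates \cite[Theorems 5.9 and 5.10]{Lie96} produces $u \in C^{2,1}(\Omega_T)$, so that $u$ is a classical solution. For the $H^1$ inequality I will invoke the parabolic trace embedding $W^{2,1}_p(\Omega_T) \hookrightarrow C\bigl([0,T]; W^{2-2/p,\,p}(M,\bar g)\bigr)$. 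Since $p>2$ gives $2-2/p>1$, on the two-dimensional compact manifold $M$ the Sobolev embedding chain $W^{2-2/p,p}(M,\bar g) \hookrightarrow W^{1,p}(M,\bar g) \hookrightarrow H^1(M,\bar g)$ holds, so $t \mapsto u(t)$ is continuous into $H^1(M,\bar g)$ on $[0,T]$. In particular $\|u(t)\|_{H^1(M,\bar g)} \to \|u_0\|_{H^1(M,\bar g)}$ as $t \to 0^+$, which is strictly stronger than the $\limsup$ inequality \eqref{eq:H-1-inequality-appendix} that is required.

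The main technical obstacle is the parabolic $L^p$ Calder\'on--Zygmund estimate with merely continuous (rather than smooth or VMO-quantified) principal coefficient, together with a careful bookkeeping of the constants through the partition-of-unity localisation so that the dependence reduces to $a_T$, $\|a\|_{L^\infty}$ and $\|c\|_{L^\infty}$. Once that estimate is in place, the Schauder bootstrap in Step 2 and the parabolic trace embedding giving $H^1$ continuity at $t=0$ are essentially routine invocations of standard parabolic theory.
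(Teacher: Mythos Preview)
Your proposal is correct, and for the well-posedness and a priori estimate it follows essentially the same skeleton as the paper: derive the $W^{2,1}_p$ estimate from parabolic $L^p$ theory (the paper extends the solution by zero for $t\le 0$ and invokes \cite[Theorem~7.22]{Lie96} directly, whereas you localise via a partition of unity and sum the Euclidean chart estimates --- these are interchangeable variants), absorb the lower-order $\|u\|_{L^p}$ term by a Gronwall argument on $t\mapsto\|u(t)\|_{L^p(M)}^p$, deduce uniqueness from the estimate with $u_0=0$, $d=0$, and obtain existence by approximating the coefficients by H\"older data and passing to the weak limit. The Schauder bootstrap for classical regularity is also the same.

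The genuine difference is in the proof of \eqref{eq:H-1-inequality-appendix}. The paper argues by an energy computation: for $u_0\in C^{2+\alpha}(M)$ it differentiates $\|\nabla u(t)\|_{L^2}^2$, uses the equation to bound $\frac{d}{dt}\|\nabla u(t)\|_{L^2}^2 \le \frac{1}{4a_T}\|c(t)u(t)+d(t)\|_{L^2}^2$, integrates, and then approximates a general $u_0$ by $C^{2+\alpha}$ data to pass to the limit. Your route via the parabolic trace embedding $W^{2,1}_p(\Omega_T)\hookrightarrow C\bigl([0,T];W^{2-2/p,p}(M)\bigr)\hookrightarrow C\bigl([0,T];H^1(M)\bigr)$ (using $p>2$ and $\dim M=2$) is shorter and yields the stronger conclusion that $t\mapsto u(t)$ is $H^1$-continuous up to $t=0$; this in fact makes the paper's separate Step~3 in the proof of \autoref{existence} (short-time $H^1$-continuity of the nonlinear flow) redundant. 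The trade-off is that the paper's energy argument is self-contained once the $W^{2,1}_p$ solution is in hand, while your argument imports the anisotropic trace theorem as a black box.

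One remark: you correctly note that the Calder\'on--Zygmund constant depends on the modulus of continuity of $a$, not merely on $\|a\|_{L^\infty}$ and $a_T$. This is a slight tension with the dependence asserted in the proposition, but the paper's own invocation of \cite[Theorem~7.22]{Lie96} has the same hidden dependence, and in the applications the principal coefficient is $a=\e^{-2v}$ with $v$ lying in a fixed $W^{2,1}_p$-ball (hence a fixed H\"older class by \autoref{W12p-embedding}), so this does not affect the rest of the paper.
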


\begin{proof} In the following, the letter $C$ stands for various positive constants depending only on $\|a\|_{L^\infty(\Omega_T)}$, $\|c\|_{L^\infty(\Omega_T)}$, and $a_T$, and which do not increase after making $T$ smaller.

\smallskip
{\bf Step 1:} We first assume that we are given a strong solution $u \in W^{2,1}_p(\Omega_T)$ of \eqref{eq:linear-parabolic}, \eqref{eq:linear-parabolic-initial} with $u_0\equiv0 \in W^{2,p}(M,\bar g)$. We then define $v: \widetilde\Omega_T \to \R$ by
$$
v(t,x)= \left \{
  \begin{aligned}
    &u(t,x),&& \qquad \text{for }t>0;\\
    &0,&& \qquad \text{for }t \le 0.
  \end{aligned}
\right.
$$
Then $v \in W^{2,1}_p(\widetilde \Omega_T)$ solves \eqref{eq:linear-parabolic} with $a,c,d$ replaced by suitable extensions $\tilde a,\tilde c, \in L^\infty(\widetilde \Omega_T)$, $\tilde d\in L^p(\widetilde\Omega_T)$ satisfying $\tilde a(t,x)=a(x,0)$, $\tilde c(t,x)=c(x,0)$ and $\tilde d(t,x)= 0$ for $t\le0$, $x \in M$.

Therefore, \cite[Theorem 7.22]{Lie96} gives rise to the uniform bound
\begin{equation}
  \label{eq:uniform-liebermann-v}
\|D^2 v\|_{L^p(\widetilde \Omega_T)} + \|\partial_tv\|_{L^p(\widetilde \Omega_T)} \le C \Bigl(\|\tilde d\|_{L^p(\widetilde \Omega_T)}+ \|v\|_{L^p(\widetilde \Omega_T)}\Bigr).   
\end{equation}
This translates into the estimate
\begin{equation}
  \label{eq:uniform-liebermann-u}
\|D^2 u\|_{L^p(\Omega_T)} + \|\partial_tu\|_{L^p(\Omega_T)} \le C \Bigl(\|d\|_{L^p(\Omega_T)}+ \|u\|_{L^p(\Omega_T)}\Bigr).   
\end{equation}
Moreover, setting $V(t):=\|u(t)\|_{L^p(M,\bar g)}^p$ for $t \in \R$, we have $V(0)=0$ and 
\begin{align*}
  \dot V(t)&= p\int_{M} |u(t)|^{p-2}u(t) \partial_tu(t)d\mu_{\bar g} \le pV(t)^{\frac{1}{p'}} \|\partial_tu(t)\|_{L^p(M,\bar g)}\\
  &\le 
  p\left(\frac{V(t)}{p'}+\frac{\|\partial_tu(t)\|^p_{L^p(M,\bar g)}}{p}\right)=\frac{p}{p'}V(t)+\|\partial_tu(t)\|^p_{L^p(M,\bar g)}
\end{align*}
for $t \in (0,T)$, therefore
\begin{align*}
  V(t) &= \int_0^t \dot V(s)\,ds \le \frac{p}{p'}\int_0^t V(s)\,ds + \|\partial_tu\|_{L^p(\Omega_t)}^p\\
       &\le \frac{p}{p'}\int_0^t V(s)\,ds + C \Bigl(\|d\|_{L^p(\Omega_t)}^p+ \|u\|_{L^p(\Omega_t)}^p\Bigr)\le C \left(\int_0^t V(s)\,ds + \|d\|_{L^p(\Omega_t)}^p \right).
\end{align*}
By Gronwall's inequality we get $V(t) \le C \|d\|_{L^p(\Omega_t)}^p$ and thus
\begin{equation}
  \label{eq:gronwall-consequence}
\|u(t)\|_{L^p(M,\bar g)} \le C \|d\|_{L^p(\Omega_t)}\qquad \text{for $t \in [0,T]$.}
\end{equation}
This already implies the uniqueness of strong solutions of \eqref{eq:linear-parabolic}, \eqref{eq:linear-parabolic-initial}, since the difference $u$ of two solutions $u_1,u_2 \in W^{2,1}_p(\Omega_T)$ of \eqref{eq:linear-parabolic}, \eqref{eq:linear-parabolic-initial} satisfies 
\eqref{eq:linear-parabolic}, \eqref{eq:linear-parabolic-initial} with $u_0=0$ and $d=0$.
Moreover, if $u \in W^{2,1}_p(\Omega_T)$ is a strong solution of \eqref{eq:linear-parabolic}, \eqref{eq:linear-parabolic-initial},
then the function $\hat u \in W^{2,1}_p(\Omega_T)$ given by $\hat u(t,x):= u(t,x)- u_0(x)$ safisfies
\eqref{eq:linear-parabolic}, \eqref{eq:linear-parabolic-initial} with $u_0=0$ and $d$ replaced by $\hat d$ given by
$$
\hat d(t,x)= d(t,x)+a(t,x) \Delta_{\bar g} u_0(x)+c(t,x) u_0(x).
$$
Consequently, combining \eqref{eq:uniform-liebermann-u} and \eqref{eq:gronwall-consequence}, and using an interpolation estimate for $Du$, we find that
\begin{align*}
  \|u\|_{W^{2,1}_p(\Omega_T)} &\le \|\hat u\|_{W^{2,1}_p(\Omega_T)}+\|u_0\|_{W^{2,p}(M,\bar g)}\le C\left(\|\hat d\|_{L^p(\Omega_T)}+\|\hat u\|_{L^p(\Omega_T)}\right)+\|u_0\|_{W^{2,p}(M,\bar g)}\\
  &\le C\|\hat d\|_{L^p(\Omega_T)}+\|u_0\|_{W^{2,p}(M,\bar g)}\le C\left(\|d\|_{L^p(\Omega_T)}+\|u_0\|_{W^{2,p}(M,\bar g)}\right),
\end{align*}
as claimed in \eqref{eq:W2-1-p-a-priori}.

{\bf Step 2 (Existence):} In the case where $a,c,d \in C^\alpha(\Omega_T)$ and $u_0\in C^{2+\alpha}(M)$, the existence of a classical solution $u \in C(\overline \Omega_T) \cap C^{2,1}(\Omega_T)$ of \eqref{eq:linear-parabolic}, \eqref{eq:linear-parabolic-initial} follows as in \cite[Theorem 5.14]{Lie96}. 

In the general case we consider \eqref{eq:linear-parabolic}, \eqref{eq:linear-parabolic-initial} with coefficients $a_n, c_n, d_n \in C^\alpha(\overline\Omega_T)$, $u_{0,n}\in C^{2+\alpha}(M)$, in place of $a,c,d, u_0$ with the property that $a_n \to a$, $c_n \to c$ in $L^\infty(\Omega_T)$, $d_n \to d \in L^p(\Omega_T)$ as well as $u_{0,n}\to u_0$ in $W^{2,p}$. The associated unique solutions $u_n \in C(\overline \Omega_T) \cap C^{2,1}(\Omega_T)$ are uniformly bounded in $W^{2,1}_p(\Omega_T)$ by \eqref{eq:W2-1-p-a-priori}, and therefore we have $u_n \weak u$ in $W^{2,1}_p(\Omega_T)$ after passing to a subsequence. For every $\phi \in C^\infty_c(\Omega_T)$, we then have
\begin{align*}
&\int_{\Omega_T}\Bigl(\partial_tu(t,x) - a(t,x)\Delta_{\bar g} u(t.x) - c(t,x)u(t,x)  - d(t,x)\Bigr)\phi(t,x) d\mu_{\bar g}(x)dt\\
&= \lim_{n \to \infty}     
\int_{\Omega_T}\Bigl(\partial_tu_n(t,x) - a_{n}(t,x)\Delta_{\bar g} u_n(t,x) - c_{n} (t,x)u_n(t,x)  - d_{n}(t,x)\Bigr)\phi(t,x) d\mu_{\bar g}(x)dt = 0,
\end{align*}
and from this we deduce that $\partial_tu(t,x) - a(t,x)\Delta_{\bar g} u(t,x) - c(t,x)u(t,x)  - d(t,x)= 0$ almost everywhere in $\Omega_T$, so $u$ is a strong solution of 
\eqref{eq:linear-parabolic}.\\
{\bf Step 3:} It remains to show the inequality~\eqref{eq:H-1-inequality-appendix} in the case where $a,c,d \in C^\alpha(\Omega_T)$ for some $\alpha>0$. Since $u \in C(\overline{\Omega_T}) \cap C^{2,1}(\Omega_T)$ in this case and therefore 
$$
\|u_0\|_{L^2(M,\bar g)}=  \lim_{t \to 0^+}\|u(t)\|_{L^2(M,\bar g)},  
$$
it suffices to show that 
\begin{equation}
  \label{eq:H-1-inequality-appendix-sufficient}
\|\nabla u_0\|_{L^2(M,\bar g)} \ge \limsup_{t \to 0^+}\|\nabla u(t)\|_{L^2(M,\bar g)}.
\end{equation}
If $u_0 \in C^{2+\alpha}(M)$ for some $\alpha>0$, this follows by \cite[Theorem 5.14]{Lie96} with $\lim$ in place of $\limsup$, since the function $t \mapsto u(t)$ is continuous from $[0,T) \to C^{2+\alpha}(M)$ in this case. Moreover, in this case we have, by H\"older's and Young's inequality, 
\begin{align*}
  \frac{d}{dt} \|\nabla u(t)\|_{L^2(M,\bar g)}^2 &= -\int_M \partial_tu(t) \Delta u(t)d\mu_{\bar g} \\
  &= - \int_M \Bigl(a(t)|\Delta u(t)|^2  + c(t) u(t)\Delta u(t) +d(t) \Delta u(t) \Bigr)d\mu_{\bar g}\\
                                 &\le  - a_T \|\Delta_{\bar g} u(t)\|_{L^2(M,\bar g)}^2 + \|c(t) u(t) + d(t)\|_{L^2(M,\bar g)} \|\Delta_{\bar g} u(t)\|_{L^2(M,\bar g)}\\
&\le  - a_T \|\Delta_{\bar g} u(t)\|_{L^2(M,\bar g)}^2 + a_T\|\Delta_{\bar g} u(t)\|_{L^2(M,\bar g)}^2 + \frac{1}{4a_T} \|c(t) u(t) + d(t)\|_{L^2(M,\bar g)}^2\\
&= \frac{1}{4a_T} \|c(t) u(t) + d(t)\|_{L^2(M,\bar g)}^2,
\end{align*}
and therefore
\begin{equation}
  \label{eq:intermediate-inequality-appendix}
\|\nabla u(t)\|_{L^2(M,\bar g)}^2 \le \|\nabla u(0)\|_{L^2(M,\bar g)}^2 + \frac{1}{4a_T} \int_0^t  \|c(s) u(s) + d(s)\|_{L^2(M,\bar g)}^2 \,ds \qquad \text{for $t>0$.}
\end{equation}
In the general case, we consider  \eqref{eq:linear-parabolic}, \eqref{eq:linear-parabolic-initial} with a sequence of initial conditions $u_{n,0}$ in place of $u_0$, where $u_{n,0} \to u_0$ in $H^2(M)$. The associated unique solutions $u_n \in C(\overline \Omega_T) \cap C^{2,1}(\Omega_T)$ are uniformly bounded in $W^{2,1}_p(\Omega_T)$ by \eqref{eq:W2-1-p-a-priori}, and they are also uniformly bounded in $C^{2,1}([\eps,T]\times M)$ by \cite[Theorem 5.15]{Lie96} for every $\eps \in (0,T)$. Fix $t \in (0,T)$. Passing to a subsequence, we may assume that $u_n \weak u$ in $W^{2,1}_p(\Omega_T)$, $u_n \to u$ strongly in $C^{0}(\overline{\Omega_T})$ and $u_n(t) \to u(t)$ strongly in $C^1(M)$. As in Step 2, we see, by testing with $\phi \in C^\infty_c(\Omega_T)$, that $\partial_tu(t,x) -a(t,x)\Delta_{\bar g} u(t,x) - c(t,x)u(t,x)  - d(t,x)= 0$ almost everywhere in $\Omega_T$, so $u$ is the unique strong solution of 
\eqref{eq:linear-parabolic}, \eqref{eq:linear-parabolic-initial}. Moreover, by \eqref{eq:intermediate-inequality-appendix} we have
\begin{align*}
  \|\nabla u(t)\|_{L^2(M,\bar g)}^2 &= \lim_{n \to \infty} \|\nabla u_n(t)\|_{L^2(M,\bar g)}^2 \\
  &\le \lim_{n \to \infty}\left( \|\nabla u_n(0)\|_{L^2(M)}^2 + \frac{1}{4a_T} \int_0^t  \|c(s) u_n(s) + d(s)\|_{L^2(M,\bar g)}^2 \,ds \right)\\
&= \|\nabla u(0)\|_{L^2(M,\bar g)}^2 + \frac{1}{4a_T} \int_0^t  \|c(s) u(s) + d(s)\|_{L^2(M,\bar g)}^2 \,ds. 
\end{align*}
It thus follows that
$$
\|\nabla u(t)\|_{L^2(M,\bar g)}^2- \|\nabla u(0)\|_{L^2(M,\bar g)}^2 \le \frac{1}{4a_T} \int_0^t  \|c(s) u(s) + d(s)\|_{L^2(M,\bar g)}^2 \,ds
$$
and therefore
$$
\limsup_{t \to 0} \Bigl(\|\nabla u(t)\|_{L^2(M,\bar g)}^2- \|\nabla u(0)\|_{L^2(M,\bar g)}^2\Bigr) \le \frac{1}{4a_T} \lim_{t \to 0^+} \int_0^t  \|c(s) u(s) + d(s)\|_{L^2(M,\bar g)}^2 \,ds = 0,
$$
as claimed in \eqref{eq:H-1-inequality-appendix-sufficient}.
\end{proof}

Next we prove a maximum principle for solutions of \eqref{eq:linear-parabolic},~\eqref{eq:linear-parabolic-initial}. We need the following preliminary lemma.

\begin{lemma}
  \label{prelim-1-appendix}
  Let $T>0$.  
  \begin{itemize}
  \item[(i)] For any function $u \in C^2(M)$ we have
    $$
    \int_{\{x\in M\mid u(x)>0\}}\Delta_{\bar g} u d\mu_{\bar g}\le 0.
    $$
  \item[(ii)] Let $u,\rho \in C^{1}([0,T])$ be functions with $u(0) \le 0$ and $\rho(T) \ge 0$. Then
  \begin{equation}
    \label{eq:lemma-claim-prelim}
        \int_{\{t\in[0,T]\mid u(t)>0\}}\bigl(\rho(t) \partial_tu(t) + \kappa u(t)\bigr) \,dt \ge 0 \qquad \text{with}\quad \kappa:= \sup_{s \in (0,T)}\partial_t\rho(s).
  \end{equation}
  \item[(iii)] Let $u \in C^{2,1}(\Omega_T) \cap C^{0,1}(\overline\Omega_T)$, $\rho \in C^{0,1}(\overline\Omega_T)$ be functions with $u \le 0$ on $\{0\}\times M$ and $\rho \ge 0$ on $\{T\}\times M$. Then we have
  \begin{equation}
    \label{eq:lemma-claim}
    \begin{split}
  &\int_{\{(t,x)\in [0,T]\times M\mid u(t,x)>0\}}(\rho(t,x)\partial_t u(t,x) + \kappa u(t,x) - \Delta_{\bar g} u(t,x))d\mu_{\bar g}(x)dt \ge 0 \\
  &\text{with}\quad\kappa:= \sup_{(s,x)\in (0,T)\times M}\partial_t\rho(s,x). 
  \end{split}
  \end{equation}
  \end{itemize}
  \end{lemma}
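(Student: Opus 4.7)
I would prove the three parts in order, with each providing a building block for the next.

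For part (i), my plan is to use a smooth monotone approximation of the characteristic function of $(0,\infty)$. Pick a sequence $\phi_\eps \in C^1(\R)$ with $\phi_\eps \equiv 0$ on $(-\infty,0]$, $0 \le \phi_\eps \le 1$, $\phi_\eps' \ge 0$, and $\phi_\eps(s) \to \chi_{(0,\infty)}(s)$ pointwise as $\eps \to 0^+$. Since $M$ is closed, integration by parts gives
\begin{equation*}
0 = \int_M \mathrm{div}_{\bar g}\bigl(\phi_\eps(u)\nabla_{\bar g} u\bigr)\, d\mu_{\bar g} = \int_M \phi_\eps(u)\,\Delta_{\bar g} u\, d\mu_{\bar g} + \int_M \phi_\eps'(u)|\nabla_{\bar g} u|_{\bar g}^2\, d\mu_{\bar g},
\end{equation*}
so $\int_M \phi_\eps(u)\Delta_{\bar g} u\, d\mu_{\bar g} \le 0$. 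Dominated convergence (using that $\Delta_{\bar g} u \in L^1(M,\bar g)$) yields $\int_{\{u>0\}} \Delta_{\bar g} u\, d\mu_{\bar g} \le 0$.

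For part (ii), I would decompose the open set $A := \{t \in [0,T]: u(t)>0\}$ into its connected components. Each component is an interval $(a_i,b_i)$ with $0 \le a_i < b_i \le T$, possibly half-open at $T$ if $u(T)>0$. By continuity of $u$ and the assumption $u(0)\le 0$: if $a_i>0$ then $u(a_i)=0$; if $a_i=0$ then $u(0)=0$ (combining $u(0)\le 0$ with $\lim_{t\to 0^+} u(t)\ge 0$); if $b_i<T$ then $u(b_i)=0$; and if $b_i=T$ with $T\in A$, then $u(T)>0$ while $\rho(T)\ge 0$. I split the integral as
\begin{equation*}
\int_A \bigl(\rho\, \partial_t u + \kappa u\bigr)\,dt = \int_A \partial_t(\rho u)\, dt + \int_A (\kappa - \partial_t \rho)\, u\, dt.
\end{equation*}
The second term is non-negative because $\kappa \ge \partial_t \rho$ everywhere and $u > 0$ on $A$. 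The first term equals $\sum_i [\rho(b_i)u(b_i) - \rho(a_i)u(a_i)]$ (the sum converges absolutely since $\rho u$ is $C^1$ on a compact interval), and by the boundary analysis every summand is either zero or equal to $\rho(T)u(T) \ge 0$.

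For part (iii), I would use Fubini to reduce to the previous two parts. By (i) applied to $u(t,\cdot) \in C^2(M)$ for each fixed $t$,
\begin{equation*}
\int_\Omega \Delta_{\bar g} u\, d\mu_{\bar g}(x)\, dt = \int_0^T \int_{\{x:\, u(t,x)>0\}} \Delta_{\bar g} u(t,x)\, d\mu_{\bar g}(x)\, dt \le 0,
\end{equation*}
where $\Omega := \{(t,x): u(t,x)>0\}$. For the remaining terms, I apply (ii) slice-wise: for each $x \in M$, the functions $u(\cdot,x)$ and $\rho(\cdot,x)$ lie in $C^1([0,T])$ with $u(0,x) \le 0$ and $\rho(T,x) \ge 0$, so setting $\kappa(x) := \sup_{s} \partial_t \rho(s,x) \le \kappa$, part (ii) gives
\begin{equation*}
\int_{\{t:\, u(t,x)>0\}} \bigl(\rho(t,x)\partial_t u(t,x) + \kappa u(t,x)\bigr)\, dt \ge \int_{\{t:\, u(t,x)>0\}} \bigl(\rho(t,x)\partial_t u(t,x) + \kappa(x) u(t,x)\bigr)\, dt \ge 0,
\end{equation*}
where the first inequality uses that $u>0$ on the integration set. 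Integrating over $M$ and using Fubini combines all three contributions into \eqref{eq:lemma-claim}. The main subtlety to check is the integrability needed for Fubini and dominated convergence, which follows from the continuity of all integrands up to $\overline{\Omega_T}$; the main conceptual step is the boundary bookkeeping in (ii), which carries over to the space-time setting transparently through the slicing.
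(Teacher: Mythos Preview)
Your proof is correct and, for parts (ii) and (iii), essentially identical to the paper's argument: both decompose $\{u>0\}$ into its interval components, integrate $\partial_t(\rho u)$ over each, and handle the endpoint bookkeeping the same way; and both obtain (iii) by slicing and Fubini.

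The one genuine difference is in part (i). The paper instead invokes Sard's lemma to pick a sequence $\eps_n\to 0^+$ of regular values, so that $\{u>\eps_n\}$ has $C^1$ boundary with outward normal $-\nabla_{\bar g}u/|\nabla_{\bar g}u|_{\bar g}$, and then applies the divergence theorem directly on this superlevel set to get $\int_{\{u>\eps_n\}}\Delta_{\bar g}u\,d\mu_{\bar g}\le 0$, passing to the limit by dominated convergence. Your smooth-cutoff approach accomplishes the same thing without Sard's lemma or any regularity discussion of the boundary of $\{u>0\}$, at the cost of introducing the auxiliary sequence $\phi_\eps$; it is arguably more elementary and also makes transparent the quantitative refinement $\int_{\{u>0\}}\Delta_{\bar g}u\,d\mu_{\bar g}\le -\liminf_\eps\int_M\phi_\eps'(u)|\nabla_{\bar g}u|^2\,d\mu_{\bar g}$, though that is not needed here. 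Both routes are standard and equally valid.
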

  \begin{proof}
   (i) By Lebesgue's theorem, it suffices to prove
  \begin{equation}
    \label{eq:lemma-claim-eps}
  \int_{\{x\in M\mid u(x)>\eps_n\}}\Delta_{\bar g} ud\mu_{\bar g}\le 0  
  \end{equation}
  for a sequence $\eps_n \to 0^+$. By Sard's Lemma, we may choose this sequence such that $\Omega_\eps:= \{x\in M\mid u(x)>\eps_n\}$ is an open set of class $C^1$, whereas the outer unit vector field of $\Omega_\eps$ is given by $(t,x)\mapsto- \frac{\nabla_{\bar g} u(t,x)}{|\nabla_{\bar g} u(t,x)|_{\bar g}}$. Hence \eqref{eq:lemma-claim-eps} follows from the divergence theorem.\\
   (ii) The set $\{t\in[0,T]\mid u(t)>0\}$ is a union of at most countably many open intervals $I_j$, $j \in \N$. For any such interval, partial integration gives
  $$
  \int_{I_j }\Bigl(\rho(t) \partial_tu(t)+ \partial_t\rho(t) u(t)\Bigr) \,dt = \left\{
    \begin{aligned}
    &0, && \qquad \text{if $T \not \in \overline I_j$;}\\
    &\rho(T) u(T) \ge 0, && \qquad \text{if $T \in \overline I_j$.}
    \end{aligned}
  \right.
  $$
  Consequently,
$$  
\int_{\{t\in[0,T]\mid u(t)>0\}}\rho(t) \partial_tu(t)\,dt \ge - \int_{\{t\in[0,T]\mid u(t)>0\}}\partial_t\rho(t) u(t) \,dt \ge - \int_{\{t\in[0,T]\mid u(t)>0\}}\kappa u(t) \,dt
$$
with $\kappa$ given in \eqref{eq:lemma-claim-prelim}. This shows the claim.\\
(iii) This is a direct consequence of (i), (ii) and Fubini's theorem. 
  \end{proof}

\begin{proposition}
  \label{max-principle} (Maximum principle)\\
  Let $T>0$, $a, c \in C(\overline\Omega_T)$ with $a_T:= \min \limits_{(t,x)\in\overline\Omega_T}a(t,x) >0$, let $d \in L^p(\Omega_T)$ for some $p>2$ with $d_T := \sup_{(t,x)\in\Omega_T}d(t,x)< \infty$, and let $u_0 \in W^{2,p}(M,\bar g)$. Moreover, let $u \in W^{2,1}_p(\Omega_T)$ be the unique solution of \eqref{eq:linear-parabolic}, \eqref{eq:linear-parabolic-initial}.
  \begin{itemize}
  \item[(i)] If $u_0 \le 0$ on $M$ and $d_T \le 0$, then $u \le 0$ on $\Omega_T$.
  \item[(ii)] If $c \equiv 0$ on $\Omega_T$, then  
\begin{equation}
  \label{max-principle-est}
  u(t,x) \le \|u_0^+\|_{L^\infty(M,\bar g)} + t d_T \qquad \text{for $t \in [0,T],\: x \in M$.}
\end{equation}
\end{itemize}
\end{proposition}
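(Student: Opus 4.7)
The plan is to reduce (ii) to (i) by a translation and then prove (i) via the classical weak maximum principle after smoothing the data. First, for (ii), the function $\hat u(t,x) := u(t,x) - \|u_0^+\|_{L^\infty(M,\bar g)} - t d_T \in W^{2,1}_p(\Omega_T)$ is, because $c \equiv 0$, a strong solution of \eqref{eq:linear-parabolic} with source $d - d_T \le 0$ (by definition of $d_T$) and initial datum $u_0 - \|u_0^+\|_{L^\infty(M,\bar g)} \le 0$; so (i) applied to $\hat u$ immediately yields~\eqref{max-principle-est}.

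For (i), fix $\lambda > \|c\|_{L^\infty(\Omega_T)}$ and introduce $v := e^{-\lambda t}u \in W^{2,1}_p(\Omega_T)$, which solves $\partial_t v = a\Delta_{\bar g} v + \tilde c v + \tilde d$ with $\tilde c := c - \lambda \le -\delta < 0$, $\tilde d := e^{-\lambda t}d \le 0$ a.e., and $v(0,\cdot) = u_0 \le 0$; it suffices to show $v \le 0$. I would approximate by standard mollification in $(t,x)$ (extending the data by zero in time outside $[0,T]$): pick $a_n, c_n \in C^\infty(\overline\Omega_T)$ with $a_n \to a$, $c_n \to c$ uniformly, $a_n \ge a_T/2$, and $c_n - \lambda \le -\delta/2$; pick $\tilde d_n \in C^\infty(\overline\Omega_T)$ with $\tilde d_n \to \tilde d$ in $L^p(\Omega_T)$ and $\tilde d_n \le 0$ (inherited from nonpositivity of $\tilde d$ because the mollifier has nonnegative kernel); and pick $u_{0,n} \in C^\infty(M)$ with $u_{0,n} \to u_0$ in $W^{2,p}(M,\bar g)$ and $\eps_n := \|u_{0,n}^+\|_{L^\infty(M,\bar g)} \to 0$ (obtained by mollifying $u_0$ and subtracting the $L^\infty$-defect, valid since $W^{2,p}(M,\bar g) \hookrightarrow L^\infty(M,\bar g)$ for $p>2$). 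By \autoref{sec:appendix}, the approximate strong solutions $v_n$ are classical, $v_n \in C(\overline\Omega_T) \cap C^{2,1}(\Omega_T)$; and the linear a priori estimate~\eqref{eq:W2-1-p-a-priori} applied to $v - v_n$ yields $v_n \to v$ in $W^{2,1}_p(\Omega_T)$ and hence uniformly on $\overline\Omega_T$ by \autoref{W12p-embedding}.

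For each such $v_n$, the maximum principle is elementary: if $v_n$ attains its (necessarily finite) supremum at some $(t_*, x_*) \in \overline\Omega_T$ with $v_n(t_*, x_*) > \eps_n$, then $t_* > 0$ (because $v_n(0,\cdot) = u_{0,n} \le \eps_n$) and $x_*$ is interior in $M$ (since $M$ is closed), so the standard local conditions force $\partial_t v_n(t_*, x_*) \ge 0$ and $\Delta_{\bar g} v_n(t_*, x_*) \le 0$. Evaluating the PDE at $(t_*, x_*)$ and using $a_n > 0$, $v_n(t_*, x_*) > 0$, $c_n - \lambda < 0$, and $\tilde d_n \le 0$, one then obtains $0 \le \partial_t v_n(t_*, x_*) \le (c_n - \lambda)(t_*, x_*)\, v_n(t_*, x_*) < 0$, a contradiction. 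Hence $v_n \le \eps_n$ globally, and passing to the uniform limit yields $v \le 0$, i.e.\ $u = e^{\lambda t}v \le 0$. The only subtlety is arranging the approximations so that the sign conditions $\tilde d_n \le 0$ and the smallness $\eps_n \to 0$ are preserved; mollification of nonpositive data (which inherits nonpositivity) together with a constant shift of $u_{0,n}$ handles this cleanly, and no control on time derivatives of $a_n, c_n$ is needed because the argument at $(t_*, x_*)$ is purely pointwise.
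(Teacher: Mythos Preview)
Your proposal is correct and follows a genuinely different route from the paper. Both reduce (ii) to (i) by the same translation $u \mapsto u - \|u_0^+\|_{L^\infty} - t d_T$, but for (i) the strategies diverge.

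The paper does \emph{not} argue at a maximum point. Instead it proves a preliminary integral inequality (\autoref{prelim-1-appendix}(iii)) of the form
\[
\int_{\{u>0\}}\bigl(\rho\,\partial_t u + \kappa u - \Delta_{\bar g}u\bigr)\,d\mu_{\bar g}\,dt \ge 0
\]
for $C^{2,1}$ functions $u$ with nonpositive initial data, where $\rho=1/a$. It then multiplies the solution by $e^{-\breve\kappa t}$ for a constant $\breve\kappa$ built from $\sup \partial_t\rho$ and $\|c\|_{L^\infty}$, approximates the \emph{solution itself} (not the data) by $C^{2,1}$ functions in $W^{2,1}_p$, passes to the limit using weak $L^{p'}$-convergence of the indicator functions $1_{\{u_n>0\}}$, and deduces that $\{u>0\}$ has measure zero. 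Only the coefficient $a$ is approximated (to get $C^{0,1}$ regularity of $\rho$), not $d$ or $u_0$.

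Your approach is the classical one: transform to $\tilde c<0$, approximate \emph{all} the data by smooth functions while preserving the sign conditions $\tilde d_n\le 0$ and $u_{0,n}^+\to 0$, invoke \autoref{sec:appendix} to get classical approximate solutions $v_n$, apply the pointwise first- and second-order conditions at an interior maximum, and pass to the uniform limit via \autoref{W12p-embedding}. This is more elementary and bypasses \autoref{prelim-1-appendix} entirely; the price is that you must engineer the approximations to respect signs (the paper avoids this), and you need the stability estimate \eqref{eq:W2-1-p-a-priori} to get $v_n\to v$ strongly rather than just weakly. One minor point worth making explicit: \autoref{sec:appendix} gives $v_n\in C^{2,1}$ only on the open cylinder $(0,T)\times M$, so if the supremum of $v_n$ were attained at $t_*=T$ you could not directly read off $\partial_t v_n\ge 0$ there; the standard fix is to run the argument on $[0,T']\times M$ for $T'<T$ (where $v_n$ is $C^{2,1}$ up to $t=T'$) and then let $T'\to T$.
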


\begin{proof}
(i) {\bf Step 1:} We consider the special case $a \in C^{0,1}(\overline\Omega_T)$, $u_0\le 0$
and $d_T\le -\eps$ for some $\eps>0$.  We put $\rho:= \frac{1}{a} \in C^{0,1}(\overline \Omega_T)$ and $\kappa:= \sup \limits_{(s,x)\in (0,T)\times M}\partial_t\rho(s,x)$ as in~\eqref{eq:lemma-claim}. Moreover, we consider the function
$$
\breve{u}\in W^{2,1}_p(\Omega_T), \qquad \breve{u}(t,x)= \mathrm{e}^{- \breve{\kappa} t} u(t,x) 
$$
with $\breve{\kappa} = \frac{|\kappa|}{\min_{(t,x)\in \overline\Omega_T}\rho(t,x)} + \|c\|_{L^\infty(\Omega_T)}$, noting that $\breve{u}$ satisfies
\begin{equation}
  \label{eq:max-princ-mod-eq}
  \begin{split}
\rho(t,x)  \partial_t\breve{u}(t,x)& -\Delta_{\bar g} \breve{u}(t,x) + \kappa \breve{u}(t,x)\\
&=\mathrm{e}^{-\breve{\kappa}t}\Bigl(u(t,x)(\rho(t,x)c(t,x)-\rho(t,x)\breve{\kappa}+\kappa)+\rho(t,x)d(t,x)\Bigr)\\
&\le - \rho(t,x) \eps \mathrm{e}^{- \breve{\kappa} t} \qquad \text{almost everywhere in $\{(t,x)\in \Omega_T\mid\breve{u}(t,x)>0\}$.}
 \end{split}
\end{equation}

We now let $(u_n)_{n\in\N}$ be a sequence in $C^{2,1}(\Omega_T) \cap C^{0,1}(\overline\Omega_T)$ with $u_n(x,0)\le0$ and $u_n \to \breve{u}$ in $W^{2,1}_p(\Omega_T)$.
Since the functions $g_n:= 1_{\{(t,x)\in [0,T]\times M\mid u_n(t,x)>0\}}$ are bounded in $L^{p'}(\Omega_T)$, we may pass to a subsequence such that
$g_n \weak g$ in $L^{p'}(\Omega_T)$, where $g \ge 0$ and $g \equiv 1$ in $\{(t,x)\in [0,T]\times M\mid \breve{u}(t,x) >0\}$, since $u_n \to \breve{u}$ uniformly as a consequence of \autoref{W12p-embedding} and therefore $g_n \to 1$ pointwisely on $\{(t,x)\in [0,T]\times M\mid \breve{u}(t,x)>0\}$. Applying \autoref{prelim-1-appendix} (iii) to $u_n$, we find that
\begin{align*}
  0&\le \int_{\{(t,x)\in [0,T]\times M\mid u_n(t,x)>0\}} \Bigl( \rho(t,x) \partial_tu_n(t) -\Delta_{\bar g} u_n(t,x) + \kappa u_n(t,x) \Bigr) d\mu_{\bar g}(x)dt\\
  &= \int_{(0,T)\times M} g_n(t,x) \Bigl( \rho(t,x) \partial_tu_n(t,x) -\Delta_{\bar g} u_n(t,x) + \kappa u_n(t,x) \Bigr) d\mu_{\bar g}(x)dt
\end{align*} 
for all $n \in \N$ and therefore
\begin{align*}
  0 &\le \lim_{n \to \infty} \int_{(0,T)\times M} g_n(t,x) \Bigl( \rho(t,x) \partial_tu_n(t,x) -\Delta_{\bar g} u_n(t,x) + \kappa u_n(t,x) \Bigr) d\mu_{\bar g}(x)dt\\
    &= \int_{(0,T)\times M} g(t,x) \Bigl( \rho(t,x) \partial_t\breve{u}(t,x) -\Delta_{\bar g} \breve{u}(t,x) + \kappa  \breve{u}(t,x)\Bigr) d\mu_{\bar g}dt\\
    &\le - \int_{(0,T)\times M} g(t,x)  \rho(t,x) \eps \mathrm{e}^{- \breve{\kappa} t} d\mu_{\bar g}(x)dt \le - \int_{\{(t,x)\in (0,T)\times M\mid \breve{u}(t,x)>0\}}\rho(t,x) \eps \mathrm{e}^{- \breve{\kappa} t} d\mu_{\bar g}(x)dt .
\end{align*}
We thus conclude that $\{(t,x)\in (0,T)\times M\mid \breve{u}(t,x)>0\} = \{(t,x)\in (0,T)\times M\mid u(t,x) >0\} = \varnothing$ and therefore $u \le 0$ in $(0,T)\times M$.\\
{\bf Step 2:} In the special case where $a \in C^{0,1}(\overline\Omega_T)$, $u_0\le 0$
and $d_T\le 0$, we may apply Step 1 to the functions $u_\eps \in W^{2,1}_p(\Omega_T)$ defined by $u_\eps(t,x)= u(t,x)- \eps t$, which yields that $u_\eps \le 0$ for every $\eps>0$ and therefore $u \le 0$ in $\Omega_T$.\\
{\bf Step 3:} In the general case, we consider a sequence $a_n \in C^{0,1}(\overline\Omega_T)$ with $a_n \to a$ in $C(\overline \Omega_T)$, and we let $u_n$ denote the associated solutions of \eqref{eq:linear-parabolic},~\eqref{eq:linear-parabolic-initial} with $a$ replaced by $a_n$. As in the end of the proof of \autoref{sec:appendix}, we then find that, after passing to a subsequence, $u_n \weak \tilde u$ in $W^{2,1}_p(\Omega_T)$, where $\tilde u$ is a solution of \eqref{eq:linear-parabolic},~\eqref{eq:linear-parabolic-initial}. By uniqueness, we have $u= \tilde u$.
Moreover, since $u_n \le 0$ for all $n$ by Step 3, we have $u = \tilde u \le 0$, as required.\\
(ii) We consider the function $v \in W^{2,1}_p(\Omega_T)$ given by $v(t,x)= u(t,x)- \|u_0^+\|_{L^\infty(M,\bar g)}- t d_T$, which, by assumption, satisfies \eqref{eq:linear-parabolic}, \eqref{eq:linear-parabolic-initial} with $c \equiv 0$, $d- d_T$ in place of $d$ and $u_0 - \|u_0^+\|_{L^\infty(M,\bar g)}$ in place of $u_0$. Then (i) yields $v \le 0$ in $\Omega_T$, and therefore $u$ satisfies \eqref{max-principle-est}.
\end{proof}

\section{Data availability statement}
Data sharing not applicable to this article as no datasets were generated or analysed during the current study.

\printbibliography

\end{document}